\numberwithin{equation}{section}
\theoremstyle{plain}
\newtheorem{theorem}{Theorem}[section]
\newtheorem{lemma}[theorem]{Lemma}
\newtheorem{proposition}[theorem]{Proposition}
\newtheorem{corollary}[theorem]{Corollary}
\theoremstyle{definition}
\newtheorem{definition}[theorem]{Definition}
\newtheorem{remark}[theorem]{Remark}
\newcommand{\N}{\mathbb{N}}
\newcommand{\R}{\mathbb{R}}
\newcommand{\C}{\mathbb{C}}
\newcommand{\K}{\mathbb{K}}
\newcommand{\Rd}{\R^d}
\newcommand{\e}{\varepsilon}
\renewcommand{\div}{\mathrm{div}}
\newcommand{\sign}{\mathrm{sgn}\;}
\newcommand{\supp}{\mathrm{supp}\;}
\newcommand{\ud}{\,\mathrm{d}}
\def\H{\mathcal{H}}
\newcommand\quotient[2]{
	\mathchoice
	{% \displaystyle
		\text{\raise1ex\hbox{$#1$}\Big/\lower1ex\hbox{$#2$}}%
	}
	{% \textstyle
		#1\,/\,#2
	}
	{% \scriptstyle
		#1\,/\,#2
	}
	{% \scriptscriptstyle  
		#1\,/\,#2
	}
}
\newcommand{\LamCd}{\Lambda^1_{\C}}
\newcommand{\LamRd}{\Lambda^1_{\R}}
\definecolor{ddorange}{rgb}{1,0.5,0}
\definecolor{ddcyan}{rgb}{0,0.2,1.0}
\newcommand{\hd}{\mathcal{H}^{d-1}}
\newcommand{\dv}{\mathrm{div}\;}
\newcommand{\Curr}{\mathcal{D}_1}
\newcommand{\LK}{L_{\mathscr{F}[K_s]}^2(\R^d, \C^d )}
\newcommand{\LKU}{L_{\mathscr{F}[K_s]}^2(U, \C^d )}
\newcommand{\Cs}{\mathcal{C}_s}
\title[A notion of $s$-fractional mass for $1$-currents in higher codimension] {A notion of $s$-fractional mass for $1$-currents in higher codimension}
\author[M. Cicalese]
{M. Cicalese}
\address[Marco Cicalese]{Zentrum Mathematik - M7, Technische Universitat M\"unchen, Boltzmannstrasse 3, 85748 Garching, Germany	}
\email[M. Cicalese]{cicalese@.ma.tum.de}
\author[T. Heilmann]
{T. Heilmann}
\address[Tim Heilmann]{Zentrum Mathematik - M7, Technische Universitat M\"unchen, Boltzmannstrasse 3, 85748 Garching, Germany	}
\email[T. Heilmann]{heilmant@ma.tum.de}
\author[A. Kubin]
{A. kubin}
\address[Andrea Kubin]{
	Zentrum Mathematik - M7, Technische Universitat M\"unchen, Boltzmannstrasse 3, 85748 Garching, Germany	
}
\email[A. Kubin]{andrea.kubin@tum.de}
\author[F. Onoue]
{F. Onoue}
\address[Fumihiko Onoue]{
	Zentrum Mathematik - M7, Technische Universitat M\"unchen, Boltzmannstrasse 3, 85748 Garching, Germany	
}
\email[F. Onoue]{fumihiko.onoue@tum.de}
\author[M. Ponsiglione]
{M. Ponsiglione}
\address[Marcello Ponsiglione]{Dipartimento di Matematica ``Guido Castelnuovo'', Sapienza Universit\`a di Roma, Piazzale Aldo Moro 2, I-00185 Roma, Italy
}
\email[M. Ponsiglione]{ponsigli@mat.uniroma1.it}
\begin{document}
	\maketitle
		\begin{abstract}
In this paper we propose a notion of $s$-fractional mass for $1$-currents in $\R^d$. Such a notion generalizes the notion of $s$-fractional perimeters for sets in the plane  to  higher codimension one-dimensional singularities. Remarkably, the limit as $s\to 1$ of the $s$-fractional mass gives back the classical notion of length for regular enough curves in $\R^d$. 

We prove a lower semi-continuity and compactness result for sequences of $1$-currents with uniformly bounded fractional mass and support. Moreover, we prove the density  
of weighted polygonal, closed and  compact oriented curves in the class of  divergence-free 1-currents with compact support and finite fractional mass. 

Finally, we discuss some possible applications of our notion of fractional mass to build up purely geometrical approaches to the variational modeling of dislocation lines in crystals and to vortex filaments in superconductivity. 
		\vskip5pt
		\noindent
		\textsc{Keywords:} Geometric Measure Theory, Rectifiable Curves, Variational Methods, Topological Singularities
		\vskip5pt
		\noindent
		\textsc{AMS subject classifications: } 49Q15, 49Q20, 28A75, 	49J45, 	58K45.
		%35D40   %Viscosity solutions to PDEs
		%49J45    %Methods involving semicontinuity and convergence; relaxation
		%35K93   %Quasilinear parabolic equations with mean curvature operator
		%35R11   %	Fractional partial differential equations
		%35Q74  %PDEs in connection with mechanics of deformable solids
		%35B40   %Asymptotic behavior of solutions to PDEs
		%35K20  %Initial-boundary value problems for second-order parabolic equations
	\end{abstract}
	%%%%%%%%%%%%%%%%%%%%%%
	%%%%%%%%%%%%%%%%%%%%%
	%%%%%%%%%%%%%%%%%%%%%%
	\setcounter{tocdepth}{1} 
	\tableofcontents

		\section*{Introduction}
	Nonlocal energy functionals describing long-range interactions consistently garner great interest within the mathematical community.
	%Nonlocal energy functionals describing long range interactions  attracted much attention in the mathematical community. 
	When concentrated on interfaces or on more general singular sets, nonlocal energy functionals have a geometrical flavor. Nonlocal versions of the classical perimeter were introduced by Visintin in \cite{Visintin91} to model long range dissipation phenomena. These became much more popular when Caffarelli, Roquejoffre, and Savin introduced  in \cite{CRS10} the notion of $s$-fractional perimeters
	%and they studied the minimization problem of the fractional perimeter, as an analogy of the classical perimeter. 
	$P_s$, defined on any measurable set $E \subset \mathbb{R}^d$    as the squared fractional Sobolev semi-norm of its characteristic function, i.e., by 
	\begin{equation}\label{defFractionalPeri}
		P_s(E) = \int_{E}\int_{\R^d \setminus E} \frac{1}{|x-y|^{d+s}} \,dx\,dy \, .
	\end{equation}
	%	where $E^c$ is the complement of $E$. 
	
	Our aim  is to introduce and analyze a class of nonlocal energy functionals defined on  oriented curves in $\R^d$ (namely on  $1$-currents), generalizing the notion of fractional perimeters of sets in $\R^2$ to higher codimension and to not necessarily integer rectifiable $1$-currents.  The proposed notion of fractional mass (length)  takes into account the cancellation effects hidden in the definition of fractional perimeters.  It is well known that the fractional perimeter $P_s$ does not control the local perimeter, and in fact there are sets with infinite perimeter but finite fractional perimeter;   for these sets with  finely oscillating boundaries, what makes  the fractional perimeter  finite are cancellations  that occur for  pieces of the boundary near to each other but with opposite orientations: Their  contributions   cancel each other rather than summing up their mass.  This suggests to introduce a notion of fractional mass where the nonlocal interactions between pairs of points $x,\, y $  are ruled  by a kernel depending on the distance $|x-y| $   with a (changing sign) pre-factor given  by  the scalar product between the tangent vectors to the curve at the points $x$ and $y$, respectively.
	Precisely, the $s$-fractional mass of a smooth curve $\gamma$ is defined by
	\begin{equation}\label{deforga}
		\mathrm{M}_s (\gamma) = \int_{\gamma}\int_{\gamma} \frac{ \tau(x)\cdot \tau(y)}{|x-y|^s}  \, d \H^1\, d\H^1\
	\end{equation}
	where $\tau(z)$ is the tangent unit vector of $\gamma$  at $z\in\gamma$ and $\H^1$ denotes the standard one dimensional Hausdorff measure.
	
	This notion of fractional mass not only mimics fractional perimeters, but it really extends them; in fact, on the one hand, integrating by parts \eqref{defFractionalPeri} one can check that the $s$-fractional perimeter of a set in $\R^2$ coincides (up to a prefactor)
	with the fractional mass 	 $\mathrm{M}_s$ of its (oriented) boundary; on the other hand,   $\mathrm{M}_s$ makes sense also for (not necessarily integer rectifiable) $1$-currents  in higher codimension. To understand this and identify the natural domain of $\mathrm{M}_s$  it is convenient to think in Fourier space. 
	Given a $1$-form $\gamma$  with compact support and finite mass, we consider the corresponding vector valued measure $\mu_\gamma$ and set (according with \eqref{deforga})
	\begin{equation}\label{fractionalPeriCurve2dAsFourierTransform}
		\mathrm M_s(\gamma): = {c_{d,s}} \int_{\mathbb{R}^2} |\mathscr{F}[\mu_{\gamma}]|^2 \, \frac{dx}{|x|^{d-s}}
	\end{equation} 
	where $c_{d,s}$ is a positive constant depending only on $d$ and $s$ and $\mathscr{F}[\mu_{\gamma}]$ denotes the Fourier transform of $\mu_\gamma$.
	As a consequence of this representation, $1$-forms with bounded $s$-fractional mass correspond to measurable vector fields in Fourier 
	space with a suitable (depending on $s$) weighted $L^2$ norm  finite.  This observation leads to natural lower semicontinuity (see Proposition \ref{lose}) and compactness properties in the dual norm of regular enough $1$-forms with compact support  (Theorem \ref{compthm}).
	
	In Theorem \ref{teoremaapprosi} we prove the density of weighted polygonal, closed and  compact oriented curves in the class of  divergence-free 1-currents with compact support and finite fractional mass. This result can be regarded as a nonlocal extension of the result by Marchese and Wirth in \cite{MaWi19} and Fortuna and Garroni in \cite[Theorem 3.2]{FoGa23},   related to the so-called Smirnov's decomposition theorem  \cite{Smirnov93} concerning the decomposition of  divergence-free vector fields through  elemental solenoids.  
	
	Remarkably, as $s\to 1$, the $s$-fractional mass, suitably scaled,  converges to the classical local length (see Proposition \ref{limitFracMassSto1SmoothCurves}). This is consistent with the asymptotic analysis developed for $s$-fractional perimeters \cite{Da2002,ADPM11, Po04} and Gagliardo seminorms \cite{BBM01, BBM02, CDKNP23} as $s\to 1$. Clearly, replacing $s$ with $1$ in the definition of $\mathrm M_s$ \eqref{deforga} would give back an infinite quantity for all non trivial given $\gamma$. On the other hand, such a formally  infinite energy functional seems to be relevant in many physical systems; for instance, suitable anisotropic and material dependent variants of $\mathrm M_1$ give back the self energy of dislocation lines in metals (\cite[formula 4.44]{HL82}).  To cut off the infinite core energy, a classical approach, referred to as the core radius  approach, consists in introducing a length scale $\varepsilon>0$ representing the lattice spacing and in removing the elastic energy in a  $\varepsilon$-neighborhood of the dislocation line.  The resulting energy blows up logarithmically as $\varepsilon$ vanishes. This resembles the behavior of the Ginzburg Landau energy for vortex filaments in superconductivity \cite{BBH94}. In both models, the energy is diffused in a neighborhood of the line singularity, and concentrates on it as $\e\to 0$. It seems then  interesting  to introduce a  purely geometric counterpart of the $\varepsilon$-regularized dislocations and Ginzburg Landau energy functionals,  defined directly on the singular line $\gamma$ as
	\begin{equation}\label{deforga2}
		\mathrm{M}_1^\varepsilon (\gamma) = \int_{\gamma}\int_{\gamma}  \tau(x)\cdot \tau(y)  K_\e(|x-y|)  \, d \H^1\, d\H^1\ \,,
	\end{equation}
where 		$K_\e$ takes the form $K_\e(z):=\frac{1}{\max\{|z|, \varepsilon\}}$ (or better, suitable  variants of it that are strictly monotone also around zero). 
	In fact, the identification and proposal of this functional is one of the motivations behind this work. Before analyzing such a functional, it seemed appropriate to consider its counterparts $\mathrm M_s$ with finite energy, that is, with 
	$s \in (0,1)$. The analysis of the regularized critical limit case $\mathrm{M}_1^\varepsilon$ goes beyond the scopes of this paper, but in our opinion deserves further investigation. 
	
	Another natural extension of our theory consists in generalizing the $s$-fractional mass for $1$-currents in $\R^d$ to the general case of  $k$-currents for any $1\le k\le d$.  
	Recently, several authors have tried to extend the notion of the fractional length or area to more general (smooth) manifolds even with boundary. For instance, Paroni, Podio-Guidugli, and Seguin in \cite{PPgS18} introduced the notion of fractional area for any smooth hypersurface with boundary and computed the Euler-Lagrange equation associated with such a functional (see also \cite{PPgS22} for the notion of fractional curvature tensor). Moreover, Seguin in \cite{Seguin20} introduced a notion of fractional length for (not necessarily closed)  smooth curves in $\mathbb{R}^d$ with $d \geq 2$.
	% and studied several properties of that functional. Roughly speaking, Seguin defined the fractional length of a smooth curve as the integration with some weight function over a set of points describing $(d-1)$-dimensional disks that intersect the curve an odd number of times.
	We also remark that O'Hara introduced an energy functional for smooth curves in $\mathbb{R}^3$, which is known as the O'Hara energy, in a series of papers \cite{OHara91, OHara92, OHara92No2, OHara94}. Such an energy functional represents  a sort of ``electrostatic potential energy'' for curves in $\mathbb{R}^3$ and  it takes its minimum when a curve is a circle and it blows up when a curve has a self-intersection. In some respects our proposed $s$-fractional mass represents a natural counterpart of the O'Hara energy taking into account cancellation effects for pieces of curves with opposite orientations, that attract rather than repel each other. While a thin and elongated curve, driven by the O'Hara energy, would luckily converge to a flat disc, the corresponding evolution driven by $\mathrm M_s$ should attract such lines with opposite orientations, leading  to their annihilation. This dynamics mirrors exactly the dynamics observed in dislocation and vortex filaments. Our proposed fractional length represents a first attempt to establish a sound variational model for these phenomena within a purely geometric framework.

	%%%%%%%%%%%%%%%%%%%%%%%%%%%%%%
	%%%%%%%%%%%%%%%%%%%%%%%%%%%%%%
	%%%%%%%%%%%%%%%%%%%%%%%%%%%%%%

	%%%%%%%%%%%%%%%%%%%%%%%%%%%%%%%%%%%%%%%%%
	%%%%%%%%%%%%%%%%%%%%%%%%%%%%%%%%%%%%%%%%%
	%%%%%%%%%%%%%%%%%%%%%%%%%%%%%%%%%%%%%%%%%
	
	%%%%%%%%%%%%%%%%%%%%%%%%%%%%%%
	%%%%%%%%%%%%%%%%%%%%%%%%%%%%%%
	%%%%%%%%%%%%%%%%%%%%%%%%%%%%%%
\vskip15pt

	%%%%%%%%%%%%%%%%%%%%%%%%%%%%%%%%%%%%%%%%%
	%%%%%%%%%%%%%%%%%%%%%%%%%%%%%%%%%%%%%%%%%
	%%%%%%%%%%%%%%%%%%%%%%%%%%%%%%%%%%%%%%%%%
	{\bf{Notation of the paper}}
	 We denote by $\R^d$ the $d$-dimensional Euclidean space and by $e_1,\dots,e_d$ its standard orthonormal basis. We then denote by $\lambda_1,\dots, \lambda_d$ the standard basis of the dual space $\Lambda^1_{\R}:=\Lambda^1 (\R^d)$ of $\R^d$. We denote by $\Lambda^1_{\C} \coloneqq \Lambda^1(\C^d) (\cong \Lambda^1(\R^{2d}))$ the dual space of $\C^d$. Notice that $\Lambda^1_{\R}$ is the $\R$-subspace of $\LamCd$.
	
	We denote by $\omega_d$ the volume of the unit open ball $B_1(0)$ of $\mathbb{R}^{d}$. Given any Lebesgue measurable set $E\subset \mathbb{R}^d$, we denote by $|E|$ or euivalently by $\mathcal L^d(E)$ its Lebesgue measure. We denote by $\mathcal{M}_{b}(\mathbb{R}^{d})$ the space of (non negative) finite Radon measures in $\mathbb{R}^{d}$. The Dirac delta measure centered in $x$ is denoted by $\delta_{x} $. 
	
	We denote by $\mathcal{D}^1$ the space of compactly supported, smooth 1-forms $ C_c^{\infty}(\R^d,\, \Lambda^1_{\R})$. The dual space of $\mathcal{D}^1$ is the space of $1$-currents and it is denoted by $ \Curr$. For any $\gamma \in \Curr$, we define its support by $\mathbb{R}^d \setminus \cup W$ where the union is over all open sets $W \subset \mathbb{R}^d$ such that $\gamma(\omega)=0$ for any $\omega \in \mathcal{D}^1$ with $\supp \omega \subset W$. Given $\gamma \in\mathcal{D}_1$, we define the boundary of $\gamma$ by the 0-current such that $ \partial \gamma(\omega):= \gamma (d \omega)$ for all $ \omega \in C_c^{\infty}(\R^d,\R)$. If $(X,\| \cdot\|)$ is a normed vector space and $Y \subset X$ and $\K$ is a topological field, we denote with $ C((Y,\| \cdot\|),\K)$ the space of all continuous function from $(Y,\| \cdot\|)$ to $\K$ endowed with the induced topology of $(X,\| \cdot\|)$. If $z \in \C$, we denote by $\overline{z}^*$ its complex conjugate.
	We denote by $C(*,\cdots,*)$ a positive constant that depends on finite factors $*,\cdots,*$; this constant can vary from line to line. 
%Finally, $\overline{\mathbb{R}}:=\mathbb{R}\cup \{-\infty,+\infty\}$.
%Moreover, if $ \gamma \in \Curr$ is such that  $ \gamma(\omega)= \int_{\R^d} \langle \omega , \, f\rangle \, dx$ for some  $ f \in \mathrm{L}^1_{loc}(\R^d, \R^d)$ and for all $\omega\in \mathcal D^1$, with slight abuse of notation, we write $f$ instead of $\gamma$.
	
	\section{A notion of $s$-fractional mass for $1$-current and its properties}\label{Sec1}
	The Fourier transform of an integrable function $f \colon \R^d \rightarrow \R $ is defined as
	\begin{equation}\label{Fourtrasform}
		\mathscr{F}[f](\xi):= \int_{\R^d} f(x)e^{-ix \cdot \xi}\,dx
	\end{equation}
	and its inverse $\mathscr{F}^{-1}$ is defined as
	\begin{equation*}
		\mathscr{F}^{-1}[f](x) \coloneqq \frac{1}{(2\pi)^d} \int_{\R^d} f(\xi) \, e^{i\,\xi \cdot x}\,d\xi.
	\end{equation*}
%	and we also write $ \hat{f}:=\mathscr{F}[f]$. 
	Let $s\in(0,1)$ be fixed. For $0<\alpha <d$ we consider the kernel $K_\alpha:\R^d \to (0,+\infty)$ defined as
	$$
	K_\alpha(x):= \frac{1}{\vert x \vert^{\alpha}}\, %\qquad \text{ for all } r>0.
	$$
	
	We highlight that  the Fourier transform $\mathscr{F}[K_\alpha]$ is 
	\begin{equation}\label{fouritra}
		\mathscr{F}[K_\alpha](\xi)=c(\alpha,d)K_{d-\alpha}(\vert \xi \vert)\,,
	\end{equation}
	where \begin{equation}\label{20122023matt1}
		c(\alpha,d):=2^{d-\alpha}\pi^{\frac d 2}\frac{\Gamma(\frac{d-\alpha}{2})}{\Gamma(\frac{\alpha}{2})}
	\end{equation}
	and $\Gamma:(0,+\infty) \rightarrow (0,+\infty)$ is the Euler Gamma function, namely $\Gamma(\beta):=\int_{0}^{+\infty}t^{\beta-1}e^{-t}\ud t$\,, for $\beta>0$\,.
	Notice in particular that $\mathscr{F}[K_{\alpha}]>0$. 
	
	We define the family $\Gamma$ of the finite union of simple $C^1$ curves (not necessarily closed), namely
	\begin{equation}\label{defGamma}
		\Gamma:= \left\{\bigcup_{i=1}^N \gamma_i \colon \, N\in \N, \gamma_i\in C^1([a_i,b_i], \R^d) \text{ for } a_i < b_i\right\}\,.
	\end{equation}
	%:= \sum_{t \colon \exists i,\, \gamma_i(t)=x} \dot{\gamma}_i(t)$. Moreover we define $ \tau_i(x)=\sum_{t \colon \gamma_i(t)=x} \dot{\gamma}_i(t)$.
%	The $s$-fractional mass % $\mathrm{M}_s:\Gamma\to \R$ is defined by
%of $\gamma$ is then defined by 
	The $s$-fractional mass of a curve $\gamma \in \Gamma$ is defined as
	\begin{align}\label{defps}
%		\mathrm{M}_s (\gamma):= \sum_{i,j=1}^N \int_{\gamma_i}\int_{\gamma_j} K_s(x-y)  \langle \dot \tau_i(x) , \dot\tau_j(y) \rangle  \, d \H^1\, d\H^1\ .
%		\mathrm{M}_s (\gamma):= \int_{\Rd}\int_{\Rd} K_s(x-y)  \langle \tau(x) , \tau(y) \rangle 
%		 \, d \H^1\, d\H^1=
\mathrm{M}_s (\gamma) &\coloneqq \sum_{i,j=1}^N \int_{\gamma_i}\int_{\gamma_j} K_s(x-y) 
		  \tau_i(x) \cdot \tau_j(y)   \, d \H^1\, d\H^1\ \nonumber\\
		  &= \sum_{i,j=1}^N \int_{\gamma_i}\int_{\gamma_j} \frac{ \tau_i(x)\cdot \tau_j(y)}{|x-y|^s}  \, d \H^1\, d\H^1\
	\end{align}
	where $\tau_i$ denotes the unit tangent vector of $\gamma_i$ for every $i \in \{1,\,2,\cdots,N\}$. 

	The next proposition states that in dimension 2, the above notion of the $s$-fractional mass for a closed, simple, smooth curve $\gamma$ coincides (up to.a multiplicative constant) with that of fractional perimeter of a set $E$ such that $\partial E=\gamma$ .
	\begin{proposition}\label{propositionFracPerivsFracMass}
		Let $ E \subset \R^2 $ be an open bounded set with $C^1$ boundary. 
		%and let $ \gamma_E$ be the $1$-current defined above. Then 
		For all $ s \in (0,1)$
		$$ \mathrm{P}_s(E)=\frac{1}{s^2} \mathrm{M}_s(\partial E)$$
		where we identify  $\partial E$ as an element of  $\Gamma$ in the obvious way.
	\end{proposition}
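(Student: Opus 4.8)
The plan is to obtain the double boundary integral defining $\mathrm{M}_s$ from the area integral $\mathrm{P}_s(E)=\int_E\int_{\R^2\setminus E}|x-y|^{-2-s}\,dx\,dy$ by integrating by parts twice, once in each variable, via the divergence theorem. The starting point is the elementary identity, valid for $x\neq y$ in $\R^d$,
\begin{equation*}
	\div_x\frac{x-y}{|x-y|^{2+s}}=\frac{d-2-s}{|x-y|^{2+s}},
\end{equation*}
which for $d=2$ reduces to $\div_x\frac{x-y}{|x-y|^{2+s}}=-\frac{s}{|x-y|^{2+s}}$, and symmetrically in $y$. Thus in the plane the kernel $|x-y|^{-2-s}$ is, up to the factor $-1/s$, the divergence of an explicit vector field; this is precisely what produces the two powers of $1/s$ in the claimed prefactor $1/s^2$.

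First I would rewrite the inner integral. Fixing $x\in E$, I would use $|x-y|^{-2-s}=-\frac1s\div_y\frac{y-x}{|x-y|^{2+s}}$ and apply the divergence theorem on the (unbounded) set $\R^2\setminus E$, whose boundary is $\partial E$ with outer normal $-\nu$, where $\nu$ is the outer unit normal of $E$. Since the field decays like $|y|^{-(1+s)}$ at infinity, there is no contribution from large spheres, and one gets
\begin{equation*}
	\int_{\R^2\setminus E}\frac{dy}{|x-y|^{2+s}}=\frac1s\int_{\partial E}\frac{(y-x)\cdot\nu(y)}{|x-y|^{2+s}}\,d\H^1(y).
\end{equation*}
Next I would integrate this over $x\in E$ and perform the second integration by parts in $x$. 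Here the crucial observation is $\frac{y-x}{|x-y|^{2+s}}=\frac1s\nabla_x|x-y|^{-s}$, so that for fixed $y\in\partial E$ the divergence theorem on $E$ gives
\begin{equation*}
	\int_E\frac{(y-x)\cdot\nu(y)}{|x-y|^{2+s}}\,dx=\frac1s\int_{\partial E}\frac{\nu(x)\cdot\nu(y)}{|x-y|^{s}}\,d\H^1(x).
\end{equation*}
Combining the two displays and invoking Fubini to interchange the order of integration yields $\mathrm{P}_s(E)=\frac{1}{s^2}\int_{\partial E}\int_{\partial E}\frac{\nu(x)\cdot\nu(y)}{|x-y|^s}\,d\H^1(x)\,d\H^1(y)$.

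To conclude I would identify this with $\mathrm{M}_s(\partial E)$: in $\R^2$ the induced boundary orientation makes the unit tangent $\tau$ the image of the outer normal $\nu$ under the rotation by $\pi/2$, and since a rotation is orthogonal, $\nu(x)\cdot\nu(y)=\tau(x)\cdot\tau(y)$ pointwise, so the two double integrals coincide. The main obstacle is not the algebra but the careful justification of the two applications of the divergence theorem in the presence of the singularity at $x=y$, which in the second step lies on $\partial E$ (in the first step $x$ is interior to $E$, so the inner integrand is regular on $\overline{\R^2\setminus E}$ and only the decay at infinity must be checked). In the second step I would excise a ball $B_\e(y)$ and let $\e\to0$, checking that the extra boundary term is $O(\e^{1-s})\to0$ because $s<1$; the same exponent guarantees $\nabla_x|x-y|^{-s}\in L^1(E)$, since $1+s<2$. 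Finally, the use of Fubini is legitimized by the finiteness of $\mathrm{P}_s(E)$ for $s<1$: the inner integral is $O(\mathrm{dist}(x,\partial E)^{-s})$, which is integrable over $E$ in the plane precisely when $s<1$.
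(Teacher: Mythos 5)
Your proposal is correct and takes essentially the same route as the paper's own proof: both hinge on the two divergence identities $\div_y\bigl(\tfrac{y-x}{|y-x|^{2+s}}\bigr)=-\tfrac{s}{|y-x|^{2+s}}$ and $\div_x\bigl(\tfrac{\nu}{|x-y|^{s}}\bigr)=s\,\tfrac{(y-x)\cdot\nu}{|y-x|^{2+s}}$, apply the divergence theorem once in $y$ over $\R^2\setminus E$ and once in $x$ over $E$, and conclude with the rotation identity $\nu(x)\cdot\nu(y)=\tau(x)\cdot\tau(y)$. The only difference is that you spell out the justifications (decay at infinity, excision of the singularity on $\partial E$, and the Fubini step) that the paper's terse computation leaves implicit.
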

	\begin{proof}
		
		For all $x,y \in \R^2 \text{ and } \nu \in \mathbb{S}^1$ we have
		\begin{equation}\label{01072023matt1}
			\div_y \left(\frac{y-x}{\vert y-x \vert^{2+s}}\right)=-s \frac{1}{\vert y-x \vert^{2+s}}, \quad \div_x \left(\frac{\nu}{\vert x-y \vert^s}\right)= s \frac{(y-x)\cdot \nu}{\vert y-x\vert^{2+s}} .
		\end{equation}
		Therefore applying the divergence theorem and the above formula we have
		\begin{equation}
			\begin{split}
				\mathrm{P}_s(E)=\int_{E} \int_{E^c} \frac{1}{\vert x- y \vert^{2+s}} &\, dx dy=\frac{1}{s}
 \int_{E} \int_{\partial E} \frac{(y-x)\cdot \nu_{E}(y)}{\vert y-x\vert^{2+s}}\, d \mathcal{H}^1(y)\, dx \\
				&=\frac{1}{s^2} \int_{\partial E } \int_{\partial E} \frac{\nu_E(x) \cdot \nu_E(y)}{\vert y-x
 \vert^s} \, d \mathcal{H}^1(x) d \mathcal{H}^1(y)\\
				&=\frac{1}{s^2} \int_{\partial E } \int_{\partial E} \frac{\tau_E(x) \cdot\tau_E(y)}{\vert y-x
 \vert^s} \, d \mathcal{H}^1(x) d \mathcal{H}^1(y)= \frac{1}{s^2} \mathrm{M}_s(\partial E).
			\end{split}
		\end{equation}
	\end{proof}

%	\fumihiko{11/12/2023. I'll add more precise and accurate computations of the first variation soon.}
%	
%	\fumihiko{12/12/2023. I've done the computation of the first variation.}
	
	We now compute the first variation of the $s$-fractional mass for curves in $\Gamma$.
	\begin{proposition}[First variation of $s$-fractional mass]\label{propositionFirstVariFracMass}
	Let $\gamma : [0,\,\ell] \to \mathbb{R}^d$ be an arc-length parametrization of a simple $C^{1}$-curve (not necessarily closed). Given any $h \in C^{1}([0,\,\ell], \mathbb{R}^d)$ with $h(0)=h(\ell)=0$, we set $\gamma_t \coloneqq \gamma + t h$ for all $0 \leq t < 1$. Then, for sufficiently small $t>0$, $\gamma_t$ is also a simple $C^{1}$-curve and
	\begin{align}
		\qquad &\frac{d}{dt} \Big\lfloor_{t=0} \mathrm M_s(\gamma_t) \nonumber\\
		&= 2 \iint_{[0,\,\ell]^2} \left[ \frac{\dot{h}(u) \cdot \dot{\gamma}(v)}{|\gamma(u) - \gamma(v)|^s} 
 		- \frac{\frac{s}{2}\,\dot{\gamma}(u) \cdot \dot{\gamma}(v)}{|\gamma(u) - \gamma(v)|^{2+s}} (\gamma(u) - \gamma(v)) \cdot (h(u)-h(v)) \right] \,du\,dv \label{firstVariFracMassComputation} \\
		&=s\int_{0}^{\ell} h(u) \cdot \left\{ \int_{0}^{\ell} \frac{\left((\gamma(u)-\gamma(v))\cdot \dot{\gamma}(u)\right)\,\dot{\gamma}(v) - \left(\dot{\gamma}(u)\cdot \dot{\gamma}(v)\right)\, (\gamma(u)-\gamma(v))}{\vert \gamma(u)-\gamma(v)\vert^{2+s}} dv \right\} du. \label{25122023matt2} 
	\end{align}
	\end{proposition}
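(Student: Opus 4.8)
The plan is to treat $\mathrm M_s(\gamma_t)$ as an ordinary integral over the fixed square $[0,\ell]^2$ and to differentiate under the integral sign. The first thing I would record is that the double integral defining $\mathrm M_s$ in \eqref{defps} is invariant under reparametrisation: for any $C^1$ immersed parametrisation $\sigma$ the Jacobian factors $|\dot\sigma(u)|\,|\dot\sigma(v)|$ produced by $\ud\H^1\,\ud\H^1$ cancel exactly against the normalisation of the two unit tangents, so that $\mathrm M_s(\sigma)=\iint_{[0,\ell]^2}\frac{\dot\sigma(u)\cdot\dot\sigma(v)}{|\sigma(u)-\sigma(v)|^s}\,\ud u\,\ud v$. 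This is essential because $\gamma_t=\gamma+th$ is no longer arc-length parametrised, yet the formula lets me write $\mathrm M_s(\gamma_t)=\iint_{[0,\ell]^2}\frac{(\dot\gamma(u)+t\dot h(u))\cdot(\dot\gamma(v)+t\dot h(v))}{|(\gamma(u)-\gamma(v))+t(h(u)-h(v))|^s}\,\ud u\,\ud v$, an integrand that is smooth in $t$ away from the diagonal.

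Next I would establish the chord-arc lower bound $|\gamma(u)-\gamma(v)|\ge c\,|u-v|$ for some $c>0$, valid because $\gamma$ is a simple $C^1$ curve on a compact interval (the unit-speed difference quotient $|\gamma(u)-\gamma(v)|/|u-v|$ extends continuously to the diagonal with value $1$ and is strictly positive off it, hence is bounded below). For small $t$ the same bound holds for $\gamma_t$ with a uniform constant, so $\gamma_t$ stays simple and $C^1$. With this bound I can justify differentiation under the integral sign by dominated convergence: the product and chain rules give, at $t=0$, the numerator term $\frac{\dot h(u)\cdot\dot\gamma(v)+\dot\gamma(u)\cdot\dot h(v)}{|\gamma(u)-\gamma(v)|^s}$ together with $-s(\dot\gamma(u)\cdot\dot\gamma(v))\frac{(\gamma(u)-\gamma(v))\cdot(h(u)-h(v))}{|\gamma(u)-\gamma(v)|^{2+s}}$, and near the diagonal the cancellation $(\gamma(u)-\gamma(v))\cdot(h(u)-h(v))=O(|u-v|^2)$ reduces the singularity of the second term to an integrable $O(|u-v|^{-s})$ (recall $s<1$). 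Symmetrising the numerator term through the exchange $u\leftrightarrow v$ collects the prefactor $2$ and yields exactly \eqref{firstVariFracMassComputation}.

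Finally, to pass from \eqref{firstVariFracMassComputation} to \eqref{25122023matt2} I would integrate the first term by parts in $u$: since $h(0)=h(\ell)=0$ the boundary contributions vanish, and the identity $\frac{d}{du}|\gamma(u)-\gamma(v)|^{-s}=-s\frac{(\gamma(u)-\gamma(v))\cdot\dot\gamma(u)}{|\gamma(u)-\gamma(v)|^{2+s}}$ converts it into a term carrying $((\gamma(u)-\gamma(v))\cdot\dot\gamma(u))\dot\gamma(v)$; in the second term I would expand $h(u)-h(v)$ and use the exchange $u\leftrightarrow v$, under which the kernel $\frac{\dot\gamma(u)\cdot\dot\gamma(v)}{|\gamma(u)-\gamma(v)|^{2+s}}(\gamma(u)-\gamma(v))$ is odd, to fold the $h(v)$ contribution onto $h(u)$. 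Collecting both contributions produces the vector field $\int_0^\ell\frac{((\gamma(u)-\gamma(v))\cdot\dot\gamma(u))\dot\gamma(v)-(\dot\gamma(u)\cdot\dot\gamma(v))(\gamma(u)-\gamma(v))}{|\gamma(u)-\gamma(v)|^{2+s}}\,\ud v$ tested against $h(u)$, as in \eqref{25122023matt2}. I expect the integration by parts to be the main obstacle: each of the two resulting kernels is only $O(|u-v|^{-1-s})$ near the diagonal and hence not absolutely integrable on its own, so I would perform the integration by parts on the truncated region $\{|u-v|>\delta\}$ and let $\delta\to0$, using the fact that the leading $O(|u-v|)$ parts of the two vectors in the bracket coincide (their difference is of lower order) so that, reading the inner integral as a principal value if necessary, the limit is finite and the boundary terms at $|u-v|=\delta$ drop out.
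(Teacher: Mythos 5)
Your proposal is correct and follows essentially the same route as the paper's proof: the same uniform chord--arc bound $|\gamma_t(u)-\gamma_t(v)|\ge c\,|u-v|$ for small $t$, the same domination of the $t$-derivative of the integrand by $C|u-v|^{-s}$, differentiation under the integral sign (you do it by dominated convergence on the full square, the paper by $\varepsilon$-truncation plus uniform convergence of $\frac{d}{dt}\mathrm M_s^{\varepsilon}(\gamma_t)$, which rests on the identical estimate), and finally integration by parts combined with the $u\leftrightarrow v$ antisymmetry; your explicit remarks on reparametrisation invariance (needed since $\gamma_t$ is not arc-length parametrised) and on the non-integrability of the two separate kernels produced by the integration by parts only spell out steps the paper leaves implicit or dismisses in one line. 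One caveat worth recording: for the antisymmetric kernel $K(u,v)=\frac{(\dot\gamma(u)\cdot\dot\gamma(v))\,(\gamma(u)-\gamma(v))}{|\gamma(u)-\gamma(v)|^{2+s}}$ the identity $\iint K\cdot(h(u)-h(v))\,du\,dv=2\iint K\cdot h(u)\,du\,dv$, combined with the integration by parts of the $\dot h$-term, produces the prefactor $2s$ rather than $s$, so the formula \eqref{25122023matt2} that both you and the paper claim to reach appears to be missing a factor $2$ --- an inconsistency already present in the paper's statement (and detectable from its later limits $(1-s)M_s\to 2\mathcal H^1$ and $(1-s)k_s\to-\ddot\gamma$, whose compatibility with the classical first variation of length also requires that factor $2$).
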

	
	\begin{proof}
	We note that $\gamma_t$ is a simple $C^{1}$-curve for $t>0$ small enough.
	Let $0<\varepsilon<1$ and for  any simple $C^1$-curve $\eta$  set 
	% $M_s^{\varepsilon}(\gamma)$ forby
	\begin{equation*}
		\mathrm M_s^{\varepsilon}(\eta) \coloneqq \iint_{\{|u-v| \geq \varepsilon\}} \frac{\dot{\eta}(u) \cdot \dot{\eta}(v)}{|\eta(u)-\eta(v)|^s} \,du\,dv. 
	\end{equation*}
	Then we have $\lim_{\varepsilon \downarrow 0} \mathrm M_s^{\varepsilon}(\eta) = \mathrm M_s(\eta)$.

	We first compute the following derivative:
%		
%		\andrea{i think that we need to compute $$ \frac{d}{dt}\left(\frac{\dot{\gamma_t}(u) \cdot \dot{\gamma_t}(v)}{|\gamma_t(u) - \gamma_t(v)|^s} \right)$$
%		because
%	\begin{equation}
%		\int_{\gamma}\int_{\gamma} \frac{ \tau_i(x)\cdot \tau_j(y)}{|x-y|^s}  \, d \H^1\, d\H^1= \int \int \frac{\tau(x)\cdot \tau(y) }{\vert \gamma(x)-\gamma(y)\vert^s} \vert \dot{\gamma}(y)\vert \vert\dot{\gamma}(x)\vert dx dy=\int \int \frac{\dot \gamma(x)\cdot \dot \gamma(y) }{\vert \gamma(x)-\gamma(y)\vert^s}  dx dy 
%	\end{equation}}
%	\fumihiko{16/12/2023. Yes, you're right. I've again done the computations.}
%		\begin{align}
%			&\frac{d}{dt}\left(\frac{\dot{\gamma_t}(u) \cdot \dot{\gamma_t}(v)}{|\gamma_t(u) - \gamma_t(v)|^s}|\dot{\gamma}_t(u)||\dot{\gamma}_t(v)| \right) \nonumber\\
%			&= \frac{\dot{\gamma_t}(u) \cdot \dot{h}(v)}{|\gamma_t(u) - \gamma_t(v)|^s}|\dot{\gamma}_t(u)||\dot{\gamma}_t(v)| + \frac{\dot{h}(u) \cdot \dot{\gamma_t}(v)}{|\gamma_t(u) - \gamma_t(v)|^s}|\dot{\gamma}_t(u)||\dot{\gamma}_t(v)| \nonumber\\
%			&\qquad  -  \frac{s(\dot{\gamma_t}(u) \cdot \dot{\gamma_t}(v))}{|\gamma_t(u) - \gamma_t(v)|^{1+s}} \frac{(\gamma_t(u) - \gamma_t(v)) \cdot (h(u) - h(v))}{|\gamma_t(u) - \gamma_t(v)|}|\dot{\gamma}_t(u)||\dot{\gamma}_t(v)| \nonumber\\
%			&\qquad \quad + \frac{\dot{\gamma_t}(u) \cdot \dot{\gamma_t}(v)}{|\gamma_t(u) - \gamma_t(v)|^s}\left( \frac{\dot{\gamma_t}(u)}{|\dot{\gamma_t}(u)|^2} \cdot \dot{h}(u) + \frac{\dot{\gamma_t}(v)}{|\dot{\gamma_t}(v)|^2} \cdot \dot{h}(v) \right) |\dot{\gamma}_t(u)||\dot{\gamma}_t(v)|
%		\end{align}
%		for any $u,\, v \in [0,\,\ell]$ with $u \neq v$.
		\begin{align}
			\frac{d}{dt}\left(\frac{\dot{\gamma_t}(u) \cdot \dot{\gamma_t}(v)}{|\gamma_t(u) - \gamma_t(v)|^s} \right)  &= \frac{\dot{\gamma_t}(u) \cdot \dot{h}(v)}{|\gamma_t(u) - \gamma_t(v)|^s} + \frac{\dot{h}(u) \cdot \dot{\gamma_t}(v)}{|\gamma_t(u) - \gamma_t(v)|^s} \nonumber\\
			&\qquad  -  \frac{s(\dot{\gamma_t}(u) \cdot \dot{\gamma_t}(v))}{|\gamma_t(u) - \gamma_t(v)|^{1+s}} \frac{(\gamma_t(u) - \gamma_t(v)) \cdot (h(u) - h(v))}{|\gamma_t(u) - \gamma_t(v)|} \nonumber\\
		\end{align}
		for any $u,\, v \in [0,\,\ell]$ with $u \neq v$. Notice that $\gamma(u) \neq \gamma(v)$ for $u \neq v$ since $\gamma$ is simple. Thus we have
		\begin{align}\label{compuFirstVariationFracMass}
			\frac{d}{dt} \mathrm M_s^{\varepsilon}(\gamma_t)  &=  \iint_{\{|u-v|\geq \varepsilon\}} \frac{d}{dt} \left( \frac{\dot{\gamma_t}(u) \cdot \dot{\gamma_t}(v)}{|\gamma_t(u) - \gamma_t(v)|^s} \right) \,du\,dv \nonumber\\
			&= 2 \iint_{\{|u-v|\geq \varepsilon\}} \frac{\dot{h}(u) \cdot \dot{\gamma_t}(v)}{|\gamma_t(u) - \gamma_t(v)|^s} \,du\,dv \nonumber\\
			&\qquad - s \iint_{\{|u-v|\geq \varepsilon\}} \frac{\dot{\gamma_t}(u) \cdot \dot{\gamma_t}(v)}{|\gamma_t(u) - \gamma_t(v)|^{2+s}} (\gamma_t(u) - \gamma_t(v)) \cdot (h(u) - h(v)) \,du\,dv  
%			&\qquad \quad + 2\iint_{\{|u-v|\geq \varepsilon\}} \frac{\dot{\gamma_t}(u) \cdot \dot{\gamma_t}(v)}{|\gamma_t(u) - \gamma_t(v)|^s} \dot{h}(u) \cdot \dot{\gamma_t}(u) |\dot{\gamma}_t(u)||\dot{\gamma}_t(v)| \,du\,dv 
		\end{align}
		for any $\varepsilon > 0$.

Since $\gamma$ is simple and of class $C^1$ and $h$ also belongs to $C^1$, for $t$ small enough it holds that
$$
|\gamma_t(u) -\gamma_t(v)|\ge c |u-v| \qquad \text{ for all } u,\, v\in  [0,\,\ell] \, ,
$$
for some constant $c>0$ independent of $t$. 
Using again  that $\gamma, \, h\in C^1$, both integrands in  \eqref{compuFirstVariationFracMass} are bounded by $c\,|u-v|^{-s}$, which is integrable on $[0,\,\ell]^2$.
%		Taylor expansion   we have
%		\begin{equation}\label{variationGamma}
%			\gamma_t(u) - \gamma_t(v) = (u-v)\dot{\gamma_t}(v) + (u-v)^2\,\int_{0}^{1}(1-\lambda)\ddot{\gamma_t}(v+\lambda(u-v)) \,d\lambda
%		\end{equation}
%		for any $u, \, v [0,\,\ell]$ with $u \neq v$ and thus we have
%		\begin{align}
%			&|\gamma_t(u) - \gamma_t(v)|^{-s} \nonumber\\
%			&= |u-v|^{-s} \left| |\dot{\gamma_t}(v)|^2 + 2(u-v)\int_{0}^{1}(1-\lambda)\ddot{\gamma_t}(v+\lambda(u-v)) \cdot \dot{\gamma_t}(v) \,d\lambda \right. \nonumber\\
%			&\qquad \left. + (u-v)^2 \left| \int_{0}^{1}(1-\lambda)\ddot{\gamma_t}(v+\lambda(u-v)) \,d\lambda \right|^2 \right|^{-\frac{s}{2}}. \label{variationGammawithPowerS}
%		\end{align}
%		Recalling that $\|\dot{\gamma}\|_{C^0} < + \infty$, $\|\ddot{\gamma}\|_{L^p} < +\infty$, and $h$ is smooth, we have, from \eqref{variationGammawithPowerS}, that
%		\begin{equation}\label{firstIntegrandEstimate}
%			\left| \frac{\dot{h}(u) \cdot \dot{\gamma_t}(v)}{|\gamma_t(u) - \gamma_t(v)|^s} \right| \leq \frac{C}{|u-v|^s}
%		\end{equation}
%%		and 
%%		\begin{equation}\label{thirdIntegrandEstimate}
%%			\left| \frac{\dot{\gamma_t}(u) \cdot \dot{\gamma_t}(v)}{|\gamma_t(u) - \gamma_t(v)|^s} \dot{h}(u) \cdot \dot{\gamma_t}(u) |\dot{\gamma}_t(u)||\dot{\gamma}_t(v)| \right| \leq \frac{C'}{|u-v|^s}
%%		\end{equation}
%		for any $(u,v) \in \{|u-v| \geq \varepsilon \}$ where $C$ and $C'$ are positive constants independent of $t$ and $\varepsilon$. 
%		
		Therefore, $\frac{d}{dt} \mathrm M_s^{\varepsilon}(\gamma_t)$ is uniformly bounded and converges uniformly to the integral in \eqref{firstVariFracMassComputation} as $\varepsilon \to 0$. 

		The formula \eqref{25122023matt2} follows by  \eqref{firstVariFracMassComputation}  integrating by parts. 
	\end{proof}
	
%	\fumihiko{22/12/2023. I have a question to who added the following proposition (Andrea or Marcello?): should we first introduce the definition of the $s$-fractional curvature (let's call it $\kappa_{s,\gamma}$) for a smooth curve $\gamma$ and then we state the proposition of the limit of $\kappa_{s,\gamma}$? This could make the flow of our arguments more natural since we can smoothly associate the $s$-fractional curvature in any dimension with the first variation result that we have just computed.}
%	
%	\andrea{yes, in this day i finish to write  }
%	
%	\fumihiko{24/12/2023. OK, thanks!}
%	
%	\andrea{I moved the proof of $ \lim_{s \rightarrow 1} k_s$  to the asymptotic trends section and here i add the definition of the $k_{s}$ curvature}
	
	Motivated by Proposition \ref{propositionFirstVariFracMass}, we can define the so-called ``$s$-fractional curvature'' of simple smooth curves as follows.
	\begin{definition}[$s$-fractional curvature]\label{defFractionalCurvature}
		Let $s \in (0,1)$ and $\gamma : [0,\,\ell] \rightarrow \R^d$ be an arc-length parametrization of a simple $C^2$-curve (not necessarily closed). For every $u \in (0,\,\ell)$, we define $s$-fractional curvature of $\gamma$ at $ u$ as
		\begin{equation}\label{curvaturefraz}
			k_{s}(u,\gamma):=s \int_{0}^{\ell}\frac{\left((\gamma(u)-\gamma(v))\cdot \dot{\gamma}(u)\right) \, \dot{\gamma}(v)-\left(\dot{\gamma}(u)\cdot \dot{\gamma}(v)\right)\,(\gamma(u)-\gamma(v))}{\vert \gamma(u)-\gamma(v)\vert^{2+s}} dv.
		\end{equation} 
	\end{definition}
	
	Now we aim at introducing the notion of $s$-fractional mass for 1-currents. This requires some preparation.
	First of all, any curve $\gamma \in \Gamma$ can be seen as a continuous linear functional defined as 
	%Here and in the following we will identify $\Gamma$ as a subset of  $\Curr$; in fact, 
	%any $ \gamma \in \Gamma$ can be identified with the $1$-current 
	%are identifying elements of $\Gamma$ with elements of $\Curr$.
	%where $ L(\gamma_i)$ is the length of $ \gamma_i$.
	%	For all $ \gamma \in \Gamma$ we define the $1$-current 
	\begin{equation}
		\gamma \colon \mathcal{D}^1 \rightarrow \R, \quad \gamma (\omega):= \int_{\gamma} \langle \omega (x), \tau(x) \rangle \, d \mathcal{H}^1(x)
	\end{equation}
	where $ \tau(x)$ is the tangent unit vector to $\gamma$ at $x$.  	 
	For $\e>0$ we let $ \rho_{\varepsilon} \in C^{\infty}_c(B_1)$ be standard Friedrich mollifiers, i.e., smooth functions with compact support such that $\int_{\R^d}\rho_{\varepsilon}\,dx = 1$ and $\lim_{\varepsilon \to 0} \rho_{\varepsilon} = \delta_0$ in the sense of distributions. Then, for any $\gamma\in\Curr$ we can define its convolution $\rho_\varepsilon * \gamma : \mathcal{D}^1 \to \R$ by  
\begin{equation}\label{convgamma}
%\gamma * \rho_\varepsilon \colon \mathcal{D}^1 \rightarrow \R \quad
 \gamma  * \rho_\varepsilon (\omega)  :=  \gamma(  \rho_\varepsilon * \omega)
\end{equation}
for any $\omega \in \mathcal{D}^1$.

We define the function space $C_0^\infty(\R^d, \LamCd)$ by the closure of $C^\infty_c(\R^d,\LamCd)$ with respect to the family of seminorms $\{\|\cdot\|_{\beta}\}_{\beta \in \N^d}$, where $ \| f\|_{\beta}:= \sup_{x \in \R^d} \vert D^{\beta}f(x) \vert$ and $ D^\beta f(x)= \partial_1^{\beta_1}\partial_2^{\beta_2}\cdots\partial_d^{\beta_d} f(x)$. In other words, we say $f \in C^\infty_0(\R^d, \LamCd)$ if there exists $ \{f_n\}_{n \in \N} \subset C^\infty_c(\R^d,\LamCd)$ such that, for all $ \beta \in \N^d$, $ \| f_n -f\|_{\beta} \rightarrow 0$ as $ n \rightarrow +\infty$. We remember that  $C_0^\infty(\R^d,\LamCd)$ is a Fréchet space.
 Moreover, for any $\eta \in C_0^{\infty}(\R^d, \LamCd)^*$ (the continuous dual space of $C^{\infty}_0(\R^d, \LamCd)$), we define 
\begin{equation}\label{conv123}
	%\eta * \rho_\varepsilon \colon C_0^{\infty}(\R^d, \Lambda^1) \rightarrow \R \quad
\eta * \rho_\varepsilon(\omega)	:=  \eta(  \rho_\varepsilon * \omega)
\end{equation}
for any $\omega \in C_0^{\infty}(\R^d, \LamCd)$.

We define the set of the $ \LamCd$-valued Schwartz functions as
\begin{equation}
	\mathcal{S}(\R^d,\LamCd):= \left\{f \in C^{\infty}(\R^d,\LamCd) \colon \| f \|_{N,\beta} < \infty \, \forall N\in \N, \beta \in \N^d\right\}
\end{equation}
where $ \| f\|_{N,\beta}:= \sup_{x \in \R^d}  (1+ \vert x \vert)^N \vert D^\beta f(x) \vert$ for any given $ N \in \N$ and $ \beta= (\beta_1,\dots,\beta_d) \in \N^d$. We denote by $\mathcal{S}(\R^d,\LamCd)^*$ the dual space of $\mathcal{S}(\R^d,\LamCd)$. We recall that the collection of seminorms $\{\|\cdot\|_{N,\beta}\}_{N,\beta}$ induces a topology on $\mathcal{S}(\R^d, \LamCd)$, namely, $f_n \to f$ in $\mathcal{S}$ as $n \to +\infty$ if $\|f_n-f\|_{N,\beta} \to 0$ for any $N \in \N$ and $ \beta \in \N^d$. We remember that  $\mathcal{S}(\R^d,\LamCd)$ is a Fr\'echet space.

Let $ U \subset \R^d$ be an open set and let $k \in \N,\, \alpha \in [0,1]$. We define $C_0^{k,\alpha}(U,\LamCd)$ as the closure of $C^{k,\alpha}_c(U,\LamCd)$ with respect to $\|\cdot\|_{C^{k,\alpha}}$, i.e., we say $f \in C^{k,\alpha}_0(U,\LamCd)$ if there exists a sequence $ \{f_n\}_{n \in \N} \subset C_c^{k,\alpha}(U,\LamCd)$ such that  $ \| f_n -f \|_{C^{k,\alpha}(U,\LamCd)} \rightarrow 0$ as $ n \rightarrow + \infty$. We recall  that $C_0^{k,\alpha}(U,\LamCd)$ is a Banach space. Given a closed set $C \subset \R^d$  we define $ C^{k,\alpha}(C,\LamCd)$ as the intersection of all $  C^{k,\alpha}(U,\LamCd)$ where $ C \subset U$ with $U $ open. \\

Now we prove that any 1-current with compact support can be  uniquely extended to a bounded linear functional on $C^{\infty}_0(\R^d, \LamCd)$. Here we consider the space of 1-forms $\mathcal{D}^1$ as the $\R$-subspace of $C^{\infty}_0(\R^d, \LamCd)$.  
	\begin{lemma}\label{lemmaexconconv}
		Let $ \gamma \in \Curr$ with compact support, then there exists a unique extension $\overline{\gamma} \in C_0^{\infty}(\R^d, \LamCd)^* \, (\subset \mathcal{S}(\R^d,\LamCd)^*)$.
%		and even more so $\overline{\gamma} \in \mathcal{S}(\R^d,\LamCd)^*$.
		Moreover  $\overline{ \gamma} * \rho_{\varepsilon} = \overline{\gamma * \rho_{\varepsilon}}$ for all $ \varepsilon \in (0,1)$ and $ \lim_{\varepsilon \rightarrow 0} \overline{ \gamma} * \rho_{\varepsilon} = \overline{\gamma}$ in the weak star topology of $ C_0^{\infty}(\R^d,\LamCd)^*$. 
	\end{lemma}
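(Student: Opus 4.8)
The plan is to build $\overline\gamma$ by hand through a cut-off and then to read off all three assertions from the construction. Write $K:=\supp\gamma$ and fix $\chi\in C_c^\infty(\R^d)$ with $\chi\equiv1$ on an open bounded neighbourhood $V$ of $K$. For $\omega\in C_0^\infty(\R^d,\LamCd)$ I would set
\[
\overline\gamma(\omega):=\gamma(\chi\,\mathrm{Re}\,\omega)+i\,\gamma(\chi\,\mathrm{Im}\,\omega),
\]
which is meaningful since $\chi\,\mathrm{Re}\,\omega,\ \chi\,\mathrm{Im}\,\omega\in\mathcal{D}^1$. Independence of the cut-off is immediate from the definition of $\supp\gamma$ (two admissible choices agree near $K$, so the resulting forms differ by one vanishing on a neighbourhood of $K$, which $\gamma$ annihilates); the functional is $\C$-linear because $\mathrm{Re}(i\omega)=-\mathrm{Im}\,\omega$ and $\mathrm{Im}(i\omega)=\mathrm{Re}\,\omega$; and it restricts to $\gamma$ on $\mathcal{D}^1$ since for real $\phi$ the form $(\chi-1)\phi$ vanishes near $K$.

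Next I would prove continuity and uniqueness. The key input is that a compactly supported current has finite order: restricting $\gamma$ to forms supported in the fixed compact set $\supp\chi$ gives a continuous functional on a Fréchet space, so there are $C>0$, $m\in\N$ with $|\gamma(\psi)|\le C\sum_{|\beta|\le m}\|\psi\|_\beta$ for every $\psi\in C_c^\infty(\supp\chi,\LamRd)$. Applying this to $\psi=\chi\,\mathrm{Re}\,\omega$ and $\psi=\chi\,\mathrm{Im}\,\omega$ and absorbing the derivatives of $\chi$ via the Leibniz rule yields $|\overline\gamma(\omega)|\le C'\sum_{|\beta|\le m}\|\omega\|_\beta$, i.e. $\overline\gamma\in C_0^\infty(\R^d,\LamCd)^*$. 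Uniqueness follows at once: $C_c^\infty(\R^d,\LamCd)=\mathcal{D}^1\oplus i\,\mathcal{D}^1$ is dense in $C_0^\infty(\R^d,\LamCd)$ by definition, so two continuous $\C$-linear extensions agreeing on $\mathcal{D}^1$ must coincide. Finally $C_0^\infty(\R^d,\LamCd)^*\subset\mathcal S(\R^d,\LamCd)^*$ because $\mathcal S(\R^d,\LamCd)\hookrightarrow C_0^\infty(\R^d,\LamCd)$ continuously with dense range, the seminorms $\|\cdot\|_\beta=\|\cdot\|_{0,\beta}$ being among those defining $\mathcal S$.

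For the identity $\overline\gamma*\rho_\varepsilon=\overline{\gamma*\rho_\varepsilon}$ I would first check that $\gamma*\rho_\varepsilon$ is again compactly supported, with $\supp(\gamma*\rho_\varepsilon)\subset K+\overline{B_\varepsilon}$ (if $\supp\omega$ lies at distance $>\varepsilon$ from $K$ then $\supp(\rho_\varepsilon*\omega)$ misses $K$), so that its extension is defined. Testing both functionals on $\omega\in C_0^\infty(\R^d,\LamCd)$, using \eqref{conv123}, \eqref{convgamma} and the fact that $\rho_\varepsilon$ is real (whence $\mathrm{Re}(\rho_\varepsilon*\omega)=\rho_\varepsilon*\mathrm{Re}\,\omega$), the claim reduces to
\[
\gamma\big(\chi\,(\rho_\varepsilon*\psi)\big)=\gamma\big(\rho_\varepsilon*(\chi'\psi)\big)\qquad\text{for }\psi\in\{\mathrm{Re}\,\omega,\ \mathrm{Im}\,\omega\},
\]
where $\chi'$ is a cut-off for $\supp(\gamma*\rho_\varepsilon)$ chosen with $\chi'\equiv1$ on $V+\overline{B_\varepsilon}$. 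Both arguments lie in $\mathcal{D}^1$, and they coincide on $V$: for $x\in V$ one has $\chi(x)=1$, while for $|y|\le\varepsilon$ the point $x-y$ lies in $V+\overline{B_\varepsilon}$, so $\chi'(x-y)=1$ and both expressions equal $(\rho_\varepsilon*\psi)(x)$. Since the difference vanishes on the neighbourhood $V$ of $K$, $\gamma$ annihilates it.

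It remains to prove the weak-star convergence, i.e. $\overline\gamma*\rho_\varepsilon(\omega)\to\overline\gamma(\omega)$ for each fixed $\omega$. By \eqref{conv123} this equals $\overline\gamma(\rho_\varepsilon*\omega)$, so by the continuity of $\overline\gamma$ it suffices to show $\rho_\varepsilon*\omega\to\omega$ in $C_0^\infty(\R^d,\LamCd)$, that is $\|\rho_\varepsilon*\omega-\omega\|_\beta\to0$ for every $\beta$. This is standard mollifier theory: $D^\beta(\rho_\varepsilon*\omega)=\rho_\varepsilon*D^\beta\omega$, and each $D^\beta\omega$ is bounded and uniformly continuous (being a uniform limit of compactly supported functions), so its mollification converges uniformly. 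The main obstacle is the first step: establishing the finite-order bound and pushing it, through the cut-off and the Leibniz rule, to genuine continuity of $\overline\gamma$ in the $C_0^\infty$-topology rather than merely on $\mathcal{D}^1$; the remaining points are bookkeeping with supports and routine convolution estimates.
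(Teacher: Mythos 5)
Your proof is correct, and its engine is the same as the paper's: a cutoff equal to $1$ on a neighbourhood of $\supp\gamma$, so that the extension is forced to equal $\gamma(\chi\,\cdot)$, combined with density of compactly supported forms in $C_0^\infty(\R^d,\LamCd)$. The differences are in execution, and on two points your version is tighter than the paper's own argument. First, the paper defines $\overline\gamma(\omega):=\lim_n\gamma(\omega_n)$ for a sequence $\{\omega_n\}\subset\mathcal{D}^1$ converging to $\omega$; since $\mathcal{D}^1$ consists of real forms, such a sequence does not exist when $\omega$ has a nontrivial imaginary part (uniform limits of real forms are real), so that definition only literally covers real-valued $\omega$. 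Your splitting $\overline\gamma(\omega)=\gamma(\chi\,\mathrm{Re}\,\omega)+i\,\gamma(\chi\,\mathrm{Im}\,\omega)$ supplies the missing complexification, and your remark that uniqueness holds among continuous \emph{$\C$-linear} extensions is the reading that makes the statement true: merely $\R$-linear continuous extensions are not unique (e.g.\ $\omega\mapsto\gamma(\chi\,\mathrm{Re}\,\omega)$ is another one). Second, for $\overline\gamma*\rho_\varepsilon=\overline{\gamma*\rho_\varepsilon}$ the paper checks the identity on $\mathcal{D}^1$, where it is immediate from \eqref{convgamma} and \eqref{conv123}, and then invokes density, whereas you verify it on all of $C_0^\infty(\R^d,\LamCd)$ by support bookkeeping with the second cutoff $\chi'$; your route also records explicitly that $\supp(\gamma*\rho_\varepsilon)\subset\supp\gamma+\overline{B_\varepsilon}$ is compact, without which the right-hand side is not even defined. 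Your continuity argument (finite order on the Fr\'echet space of forms supported in $\supp\chi$, plus the Leibniz rule) and the paper's (Cauchy sequences) are equivalent, both resting on continuity of $\gamma$ on that Fr\'echet space; and the final weak-star step is the same in both, except that you actually prove the mollifier convergence $\rho_\varepsilon*\omega\to\omega$ in $C_0^\infty(\R^d,\LamCd)$ that the paper only asserts.
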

\begin{proof}
Since $ C_0^{\infty}(\R^d,\LamCd)$ and $ \mathcal{S}(\R^d,\LamCd)$ satisfy the first axiom of countability (because they are Fr\'echet spaces), then we can define the extension only by means of converging sequences. 
	%	We note that the convergence of a sequence $\{\omega_n\}_{n \in \N} \subset \mathcal{D}^1$ in the topology of
%	$\mathcal{D}^1$ coincides with the convergence in $C^\infty_0(\R^d,\Lambda^1)$, as long as union of the supports of the $\omega_h$ is contained in a bounded set.
	Let $K$ be a compact set that contains the support of $ \gamma$.
	Let $ \varphi \in C_c^{\infty}(\R^d,\R) $ such that $ \varphi(x)=1$ for all $ x \in K$. For all $ \omega \in   C_0^{\infty}(\R^d, \LamCd) $ we consider a sequence $ \{\omega_n\}_{n \in \N} \subset \mathcal{D}^1$ such that $ \omega_n \rightarrow \omega$ in  $ C_0^{\infty}(\R^d, \LamCd) $. We observe that the sequence $ \{\varphi \omega_n\}_{n \in \N}$ converge to $ \varphi\omega$ in $ \mathcal{D}^1$ as $ n \rightarrow +\infty$. Therefore, we have that, for all $\varepsilon>0$, there exists $\overline{n} \in \N$ such that, for all $n,m >\overline{n}$,
	\begin{equation}
		 \vert \gamma(\omega_n)- \gamma(\omega_m) \vert = \vert \gamma(\omega_n-\omega_m) \vert= \vert \gamma(\varphi\omega_n-\varphi\omega_m )\vert \leq \varepsilon
	\end{equation}
where we have used the continuity of $ \gamma $. Hence,  we can define $ \overline{\gamma}(\omega):= \lim_{n \rightarrow +\infty} \gamma(\omega_n)= \gamma(\varphi \omega)$. Moreover, it is easy to check that the definition of $ \overline{\gamma}$ does not depend on the chosen sequence and of the specific choice of $ \varphi$; 
this also shows the uniqueness of the extension and the continuity of the extension and completes the proof of the first assertion. We have that $ \overline{\gamma} \in \mathcal{S}(\R^d,\LamCd)^*$ observing that if $\{\omega_n\}_{n \in \N} \subset \mathcal{D}^1$ is such that $ \omega_n \rightarrow \omega $ in $\mathcal{S}(\R^d,\LamCd)$ then $ \omega_n \rightarrow \omega$ in  $C_0^{\infty}(\R^d,\LamCd)$. 

From \eqref{convgamma} and \eqref{conv123}, we have that, for all $ \omega \in \mathcal{D}^1 $, 
\begin{equation}
 \overline{\gamma * \rho_{\varepsilon}}(\omega)= \gamma * \rho_{\varepsilon}(\omega)= \gamma(\rho_{\varepsilon} * \omega)= \overline{\gamma}(\rho_{\varepsilon} * \omega)= \overline{\gamma}* \rho_{\varepsilon}(\omega)
\end{equation}
and by density the above identities hold true for all  $ \omega \in C_0^{\infty}(\R^d,\LamCd) $.
%and the fact that if $ \omega_n \rightarrow \omega $ as $ n \rightarrow +\infty$ in $ C_0^{\infty}(\R^d,\Lambda_1)$ then $ \rho_{\varepsilon} * \omega_n \rightarrow \rho_{\varepsilon}* \omega$ in $ C_0^{\infty}(\R^d,\Lambda_1)$ as $ n \rightarrow +\infty$,  we have $ \overline{\gamma * \rho_{\varepsilon}}(\omega)= \overline{\gamma}* \rho_{\varepsilon}(\omega)$ for all $\omega \in C_0^{\infty}(\R^d, \Lambda_1)$. 
Finally, given $ \omega \in C_0^{\infty}(\R^d,\LamCd)$ 
we have
\begin{equation}
	\lim_{\varepsilon \rightarrow 0}\overline{\gamma}* \rho_{\varepsilon} (\omega)= \lim_{\varepsilon \rightarrow 0} \overline{\gamma}(\rho_{\varepsilon}* \omega)= \overline{\gamma}(\omega) 
\end{equation}
where we have used that $ \rho_{\varepsilon} * \omega \rightarrow \omega$, as $ \varepsilon \rightarrow 0$, in $ C_0^{\infty}(\R^{d},\LamCd)$.
\end{proof}

%With a small abuse of notation, we will  denote the extension
% $\bar \gamma$ provided by the above lemma, still with  $ \gamma$. 

%\fumihiko{$\uparrow$ This should be stated in the section of compactness, since we will not use it until there.}
%	\fumihiko{11/12/2023. I rewrote the statements about the space $\LK$ and other notations in the following.}
	 
	Next we define a function space which represents the natural domain for relaxing the definition of $s-$ fractional mass from the class $\Gamma$ to the class of 1-currents. To this purpose,  we define the weighted $L^2$ space associated with the kernel $K_{d-s}$ as follows:  
	\begin{equation}\label{L2kspace}
		\LK:= \left\{ f \colon \R^d \rightarrow \C^d \mid  \text{$f$ is measurable and $\int_{\R^d} \vert f (x)\vert^2 K_{d-s}(\vert x\vert )dx < \infty$} \right\}.
	\end{equation}
	Note that the space $\LK$ can be endowed  with the following norm
	\begin{equation*}
		\| f \|_{\LK} \coloneqq \left( c(s,d) \int_{\R^d} \frac{|f(x)|^2}{|x|^{d-s}}\,dx \right)^{\frac{1}{2}} = \left( c(s,d) \int_{\R^d} \vert f (x)\vert^2 K_{d-s}(\vert x\vert )dx \right)^{\frac{1}{2}}.
	\end{equation*} 
	
	\noindent We then define the Fourier transform of $\gamma \in \mathcal{D}_1$, which we denote by $\mathscr{F}[\gamma]$, as a $\C$-valued functional defined on $\mathcal{S}(\R^d, \LamCd)$. Precisely, we define $\mathscr{F}[\gamma]$ by
	\begin{equation}\label{ftcurr}
		  \mathscr{F}[\gamma](\omega) := \overline{\gamma}(\mathscr{F}[\omega])
%		  \overline{\omega}^*
	\end{equation}
	for any $\omega \in \mathcal{S}(\R^d,\LamCd)$. Here $\overline{\gamma}$ is the unique extension of $\gamma$ as in Lemma \ref{lemmaexconconv}. Notice that $\mathscr{F}[\gamma]$ is $\R$-linear and continuous on $\mathcal{S}(\R^d, \LamCd)$ because $\mathscr{F}[\gamma]$ is the composition of $\R$-linear, continuous functions $\overline{\gamma}$ and $\mathscr{F}$. 
%	 We observe that $\mathscr{F}[\gamma] \in \mathcal{S}(\R^d,\LamCd)^*$, indeed let $ \omega_n \rightarrow \omega$ in $ \mathcal{S}(\R^d,\LamCd)$ then  $\mathscr{F}[\omega_n] \rightarrow \mathscr{F}[\omega] $ in $ \mathcal{S}(\R^d,\LamCd)$ and   using Lemma \ref{lemmaexconconv} we have  $\mathscr{F}[\gamma] (\omega_n)= \overline{\gamma}(\omega_n) \rightarrow \overline{\gamma}(\omega)=\mathscr{F}[\gamma] (\omega)$. 
%	where $\overline{f}^*$ is the complex conjugate of $f$. Notice that, in the case of $\Lambda^1=\Lambda^1(\mathbb{C}^d)$, the complex conjugate is meaningful.
	Moreover, we can observe that the restriction $\mathscr{F}[\gamma]\lfloor_{\mathcal{D}^1}$ is also a bounded linear functional, i.e., a 1-current. In addition, the following Plancherel-type identity for 1-currents holds for any $\omega \in \mathcal{D}^1$:
	\begin{equation}\label{fourierTransformCurrents}
		\frac{1}{(2\pi)^d}\mathscr{F}[\gamma]\left(\overline{\mathscr{F}[\omega]}^*\right) = \gamma(\overline{\omega}^*). 
	\end{equation}
	 Indeed, from the Fourier inversion theorem, we have that  $\mathscr{F}\left[\overline{\mathscr{F}[\omega]}^*\right] = (2\pi)^d \,\overline{\omega}^*$ for any $\omega \in \mathcal{D}^1$. Then we obtain
	\begin{equation}\label{17122023pom1}
		\frac{1}{(2\pi)^d}\mathscr{F}[\gamma]\left(\overline{\mathscr{F}[\omega]}^*\right) = \frac{1}{(2\pi)^d} \overline\gamma\left(\mathscr{F}\left[\overline{\mathscr{F}[\omega]}^*\right] \right) = \gamma(\overline{\omega}^*)
	\end{equation}
	for any $\omega \in \mathcal{D}^1$. Note that the Plancherel-type identity above holds true also when $\overline{\gamma}$ is the extension of $\gamma$ as in Lemma \ref{lemmaexconconv}.
%	where $\phi_{K}\cdot \R^d \rightarrow \R$ is a cut off function with compact support such that $ \phi_{K}(x)=1 $ for all $x \in K$.
%	 It is easy to check that the above definition dose not depends to the value that assume $ \phi_{K}$ in $K^c$. 

%	\fumihiko{11/12/2023. I need an agreement of all of you about the following thing: regarding the Fourier transform of 1-currents with compact support, we will write $\mathscr{F}[\gamma]$ for $\gamma \in \Curr$ (meaning that $\mathscr{F}[\gamma]$ is a 1-current defined on $\mathcal{D}^1$). Regarding the representation of $\gamma$ in $\mathcal{C}_s$, we will write $\widehat{\gamma}$ for any $\gamma \in \mathcal{C}_s$ (meaning that $\widehat{\gamma}$ is a function defined on $\R^d$). Otherwise to mix the notions can be confusing. Of course, any suggestions will be appreciated.}
%	\fumihiko{11/12/2023. I rewrote the definition of $\mathcal{C}_s$ in the following way.}
	
	We are now in a position to introduce the admissible class $\mathcal{C}_s$ of 1-currents with compact support suited to define the $s$-fractional mass.
	\begin{definition}\label{setC_s}
		Let $U \subset \R^d$, we say $\gamma \in \Cs(U)$ if $\gamma$ is a 1-current with compact support contained in $U$ 
		and there exists a (unique)
%		 $\mathbb{R}^d$-valued Radon measure $\mu$ such that $\mu$ is absolutely continuous with respect to $\mathcal{L}^d$ and 
		measurable function $\widehat{\gamma}:\R^d \rightarrow \mathbb{C}^d$ such that
		\begin{equation}\label{defAdmissibleClassFractionalMass1Current}
			\mathscr{F}[\gamma](\omega) = \int_{\mathbb{R}^d} \left< \omega(x), \widehat{\gamma}(x) \right>\,dx \quad \text{with $\widehat{\gamma} \in \LK$}
		\end{equation}
	for any $\omega \in \mathcal{S}(\R^d,\LamCd)$. 
%	where $\frac{d \mu}{d \mathscr{L}^d}$ is the Radon-Nykodim derivative.
		% We then denote the derivative by $\widehat{\gamma}$. 
		Moreover, we define the norm of $\gamma \in \mathcal{C}_s(U)$ by
		\begin{equation*}
			\|\gamma\|_{\mathcal{C}_s} \coloneqq \| \widehat{\gamma} \|_{\LK}.
		\end{equation*} 
		In what follows we simply set $\Cs \coloneqq \Cs(\R^d)$.
	\end{definition}
	
	\begin{remark}
		We observe that the quantity $\|\gamma\|_{\Cs}$ for $\gamma \in \Cs$ is actually a norm in $\Cs$. Indeed, assuming that $\|\gamma\|_{\Cs} = 0$, we have, by definition, that $\widehat{\gamma} = 0$ a.e. in $\R^d$ and thus $\mathscr{F}[\gamma] = 0$ on $\mathcal{S}(\R^d, \LamCd)$. Then, from the Plancherel-type identity, we obtain
		\begin{equation*}
			\gamma(\overline{\omega}^*) = \mathscr{F}[\gamma]\left(\overline{\mathscr{F}[\omega]}^*\right) = 0
		\end{equation*}
		for any $\omega \in \mathcal{D}^1$, which implies that $\gamma=0$ on $\mathcal{D}^1$. The triangle inequality and absolute homogeneity hold true from the definition of $\|\gamma\|_{\Cs}$.
	\end{remark}
	\begin{remark}
	We observe that	$ \left(\mathcal{C}_s,\| \cdot \|_{\Cs}\right)$ is separable.
Since $ \LK$  satisfies the second axiom of countability, its subspaces are separable. Moreover, the map 
$$ T: \left(\mathcal{C}_s,\| \cdot \|_{\Cs}\right) \rightarrow \big(\LK, \| \cdot \|_{\LK}\big), \quad  T(\gamma):= \hat{\gamma}$$ is an isometry, therefore $ \left(\mathcal{C}_s,\| \cdot \|_{\Cs}\right)$ is isometric to a topological separable space.   
	\end{remark}
\begin{lemma}
%	If $\gamma\in \mathcal{D}_1$ with compact support then  $\mathscr{F}[\gamma] \in \mathcal{S}(\R^d,\LamCd)^*$.
%	
	If $ \gamma \in \mathcal{C}_s$, then $ \mathscr{F}[\gamma]$ can be extended into a linear and continuous functional on $C_0^{k,\alpha}(U,\LamCd)$ for any $k \in \N$, $\alpha \in (0,1)$, and bounded open set $ U \subset \R^d$.
	
%	\fumihiko{27/12/2023. Do we really need the first statement? Because we already have the one in the definition of the Fourier transform of $\gamma$ (in \eqref{ftcurr}).}
%	\andrea{yes i have done the change}
\end{lemma}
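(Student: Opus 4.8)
The plan is to use the integral representation of $\mathscr{F}[\gamma]$ furnished by Definition \ref{setC_s}: since $\gamma \in \Cs$, there is a (unique) $\widehat{\gamma} \in \LK$ with
\[
\mathscr{F}[\gamma](\omega) = \int_{\R^d}\langle \omega(x),\widehat{\gamma}(x)\rangle\,dx \qquad \text{for all } \omega \in \mathcal{S}(\R^d,\LamCd),
\]
and the whole point is to show that the right-hand side keeps defining a \emph{continuous} functional when $\omega$ runs over the Banach space $C_0^{k,\alpha}(U,\LamCd)$. I would first record two elementary facts. On the one hand, every $\omega \in C_0^{k,\alpha}(U,\LamCd)$ is a $C^{k,\alpha}$-limit of functions in $C_c^{k,\alpha}(U,\LamCd)$, hence, extended by zero, it is a bounded continuous $\LamCd$-valued function supported in the compact set $\overline{U}$; fix $R>0$ with $\overline{U}\subset B_R$. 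On the other hand, although the weight $K_{d-s}(|x|)=|x|^{-(d-s)}$ blows up at the origin, on the bounded set $B_R$ it is bounded below by $R^{-(d-s)}>0$ (recall $d-s>0$), whence
\[
\int_{B_R}|\widehat{\gamma}(x)|^2\,dx \le R^{d-s}\int_{B_R}|\widehat{\gamma}(x)|^2\,K_{d-s}(|x|)\,dx \le \frac{R^{d-s}}{c(s,d)}\,\|\gamma\|_{\Cs}^2 < \infty,
\]
so that $\widehat{\gamma}\in L^2(B_R,\C^d)\subset L^1(B_R,\C^d)$.

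The key step is then a one-line estimate. For $\omega \in C_0^{k,\alpha}(U,\LamCd)$, using $\supp\omega\subset\overline{U}\subset B_R$, the Cauchy--Schwarz inequality and the previous bound, I get
\[
\left|\int_{\R^d}\langle\omega(x),\widehat{\gamma}(x)\rangle\,dx\right| \le \|\omega\|_{L^\infty}\int_{B_R}|\widehat{\gamma}(x)|\,dx \le \|\omega\|_{L^\infty}\,|B_R|^{1/2}\,\|\widehat{\gamma}\|_{L^2(B_R)},
\]
and combining this with the displayed $L^2$-bound gives
\[
\bigl|\mathscr{F}[\gamma](\omega)\bigr| \le C(R,s,d)\,\|\omega\|_{L^\infty}\,\|\gamma\|_{\Cs} \le C(R,s,d)\,\|\omega\|_{C^{k,\alpha}}\,\|\gamma\|_{\Cs},
\]
where in the last inequality I used $\|\omega\|_{L^\infty}\le\|\omega\|_{C^{k,\alpha}}$. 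Thus $\omega\mapsto\int_{\R^d}\langle\omega,\widehat{\gamma}\rangle\,dx$ is a well-defined, linear (obviously) and continuous functional on $C_0^{k,\alpha}(U,\LamCd)$. Since on the common subspace $C_c^\infty(U,\LamCd)\subset\mathcal{S}(\R^d,\LamCd)\cap C_0^{k,\alpha}(U,\LamCd)$ it coincides by construction with $\mathscr{F}[\gamma]$, it is the desired extension.

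The argument is essentially a routine continuity estimate, so there is no serious obstacle; the only point that requires a little care is the local integrability of $\widehat{\gamma}$ near the origin, which holds precisely because membership $\widehat{\gamma}\in\LK$ forces $\widehat{\gamma}$ to be small exactly where the weight is large, while the compact support of $\omega$ confines everything to a bounded region on which the weight is bounded away from zero. I would also remark that the same computation yields continuity already with respect to the weaker sup-norm, so the Hölder structure (the indices $k$ and $\alpha$) is not actually needed for the estimate and enters only through the trivial bound $\|\omega\|_{L^\infty}\le\|\omega\|_{C^{k,\alpha}}$.
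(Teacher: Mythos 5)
Your proof is correct and follows essentially the same route as the paper's: both rest on the integral representation $\mathscr{F}[\gamma](\omega)=\int_{\R^d}\langle\omega,\widehat{\gamma}\rangle\,dx$, the local integrability of $\widehat{\gamma}$, and the trivial bound $\|\omega\|_{L^\infty}\le\|\omega\|_{C^{k,\alpha}}$, concluding via density of $C_c^\infty(U,\LamCd)$ in $C_0^{k,\alpha}(U,\LamCd)$. The only difference is cosmetic: the paper asserts $\widehat{\gamma}\in L^1(U,\C^d)$ ``by definition,'' whereas you spell out why (the weight $K_{d-s}$ is bounded below on bounded sets, so $\widehat{\gamma}\in L^2(B_R)\subset L^1(B_R)$), which is a welcome bit of added detail.
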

\begin{proof}
%	\textit{First statement }\\
%	Let $ \omega_n \rightarrow \omega$ in $ \mathcal{S}(\R^d,\LamCd)$ then  $\mathscr{F}[\omega_n] \rightarrow \mathscr{F}[\omega] $ in $ \mathcal{S}(\R^d,\LamCd)$ and   using Lemma \ref{lemmaexconconv} we have  $\mathscr{F}[\gamma] (\omega_n)= \overline{\gamma}(\mathscr{F}[\omega_n]) \rightarrow \overline{\gamma}(\mathscr{F}[\omega])=\mathscr{F}[\gamma] (\omega)$.\\
%	\textit{Second statement}\\
	Let $U \subset \R^d$ be any bounded open set. By definition, we first observe that $ \hat{\gamma} \in \mathrm{L}^1(U, \C^d)$. Let $\omega \in C_c^{\infty}(U, \LamCd)$. Then, from the integral representation of $\mathscr{F}[\gamma]$, we have that
	\begin{equation}
		\vert 	\mathscr{F}[\gamma](\omega) \vert = \left| \int_{\R^d} \left< \omega(x), \widehat{\gamma}(x) \right>\,dx\right| \leq \| \widehat{\gamma} \|_{\mathrm{L}^1(U,\C^d)} \| \omega \|_{C^{k,\alpha}(U,\LamCd)}
	\end{equation} 
	for any $k \in \N$ and $\alpha \in (0,\,1)$. Thus, since $C_c^{\infty}(U, \LamCd)$ is dense in $C_0^{k, \alpha}(U, \LamCd)$, in the same way as in the proof of Lemma \ref{lemmaexconconv}, we can continuously extend the domain of $\mathscr{F}[\gamma]$ into $C_0^{k, \alpha}(U, \LamCd)$. This completes the proof.
\end{proof}

	In the above setting, we define the $s$-fractional mass of 1-currents in $\mathcal{C}_s$ as follows.
	\begin{definition}[$s$-fractional mass for 1-currents] \label{defFractionalMass1currents}
		For any $\gamma \in \mathcal{C}_s$, we define the $s$-fractional mass $ M_s$ of $\gamma$ by
		\begin{equation}
			M_s(\gamma) \coloneqq (2\pi)^{-d} \|\gamma\|_{\mathcal{C}_s}^2.
		\end{equation}
	\end{definition}
	Notice that, by definition, the $s$-fractional mass of $\gamma \in \mathcal{C}_s$ can be also written in the following way:
	\begin{equation*}
	M_s(\gamma) = (2\pi)^{-d} \|\widehat{\gamma}\|_{\LK}^2 = (2\pi)^{-d}
		c(s,d) \int_{\R^d} \frac{1}{\vert x \vert^{d-s}} \vert \widehat{\gamma}(x) \vert^2 \, dx
	\end{equation*}
	where $\widehat{\gamma}$ is the function in \eqref{defAdmissibleClassFractionalMass1Current} associated with $\gamma \in \mathcal{C}_s$.
	
%	For all $\gamma \in \Cs$ we define 
%	$$
%	\|\gamma\|_{\Cs}= \|\mathscr{F}[\gamma]\|_{\LK}:= 
%	 \Big(c(s,d) \int_{\R^d} \frac{1}{\vert x \vert^{d-s}} \vert \hat{\gamma}(x) \vert^2 \, dx\Big)^{\frac12} \, .
%	$$
	We observe that the $\mathcal{C}_s$-norm of smooth curves (namely, the $s$-fractional mass in Definition \ref{defFractionalMass1currents}) coincides with the $s$-fractional mass defined in \eqref{defps}.
	To see this, we first recall the following classical result, see \cite[Lemma 12.12]{Mattila99}.
	\begin{lemma}\label{lemmaMattila99}
		Let $\mu$ be a Radon measure on $\R^d$ with compact support. Then for all $0 <s<d$
		\begin{equation}
			\int_{\R^d} \int_{\R^d}  \frac{1}{\vert x -y\vert^s}\, d \mu(x) d \mu(y) 
			=  (2\pi)^{-d}c(s,d) \int_{\R^d} \frac{1}{\vert x \vert^{d-s}} \vert \mathscr{F}[\mu](x) \vert^2 \, dx,
		\end{equation}
	where $c(s,d)$ is the constant of \eqref{20122023matt1} for $ \alpha=s$.
	\end{lemma}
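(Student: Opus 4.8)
The plan is to prove the identity first for measures with Schwartz density, where the distributional Parseval identity and the explicit transform \eqref{fouritra} apply cleanly, and then to recover the general case by a Gaussian mollification that exploits the positivity of $\mathscr{F}[K_s]$ to control the passage to the limit.

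\textbf{Step 1 (Schwartz densities).} I would first establish the formula when $\mu = f\,\mathcal{L}^d$ with $f \in \mathcal{S}(\R^d)$ real-valued. After the change of variables $z = x-y$, the left-hand side becomes $\int_{\R^d} K_s(z)\,g(z)\,\ud z$, where $g(z) := \int_{\R^d} f(z+w)\,f(w)\,\ud w$ is the autocorrelation of $f$; since $f \in \mathcal{S}$, also $g \in \mathcal{S}$ is real and even, and a direct computation using that $f$ is real gives $\mathscr{F}[g] = |\mathscr{F}[f]|^2$. As $0<s<d$, the kernel $K_s$ is locally integrable and polynomially bounded, hence a tempered distribution, and \eqref{fouritra} identifies $\mathscr{F}[K_s] = c(s,d)\,K_{d-s}(|\cdot|)$, again locally integrable. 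Applying the Parseval identity $\langle K_s, \mathscr{F}[\phi]\rangle = \langle \mathscr{F}[K_s], \phi\rangle$ with $\phi = \mathscr{F}^{-1}[g] = (2\pi)^{-d}|\mathscr{F}[f]|^2$ (using that $g$ is real and even) yields
\[
\int_{\R^d} K_s(z)\,g(z)\,\ud z = (2\pi)^{-d}\int_{\R^d} \mathscr{F}[K_s](\xi)\,|\mathscr{F}[f](\xi)|^2\,\ud\xi = (2\pi)^{-d} c(s,d)\int_{\R^d} \frac{|\mathscr{F}[f](\xi)|^2}{|\xi|^{d-s}}\,\ud\xi,
\]
which is exactly the claim for $\mu = f\,\mathcal{L}^d$. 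All integrals converge by the decay of $f \in \mathcal{S}$ and the local integrability of $K_s$ and $K_{d-s}$, so the use of Fubini and of the distributional pairing is justified.

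\textbf{Step 2 (mollification, Fourier-side monotonicity).} For a general compactly supported Radon measure $\mu$, I would use the Gaussian mollifier $\rho_\varepsilon(x) := (2\pi\varepsilon^2)^{-d/2} e^{-|x|^2/2\varepsilon^2}$ and set $\mu_\varepsilon := \rho_\varepsilon * \mu$, which is a nonnegative Schwartz function because $\mu$ has compact support. Applying Step 1 to $\mu_\varepsilon$ and using $\mathscr{F}[\mu_\varepsilon] = e^{-\varepsilon^2|\xi|^2/2}\,\mathscr{F}[\mu]$ gives, for every $\varepsilon>0$,
\[
\iint K_s(x-y)\,\ud\mu_\varepsilon(x)\,\ud\mu_\varepsilon(y) = (2\pi)^{-d} c(s,d)\int_{\R^d} e^{-\varepsilon^2|\xi|^2}\,\frac{|\mathscr{F}[\mu](\xi)|^2}{|\xi|^{d-s}}\,\ud\xi.
\]
The advantage of the Gaussian choice is that $e^{-\varepsilon^2|\xi|^2} \nearrow 1$ as $\varepsilon \downarrow 0$, so by monotone convergence the right-hand side increases to $(2\pi)^{-d}c(s,d)\int |\mathscr{F}[\mu]|^2\,|\xi|^{s-d}\,\ud\xi$, the target right-hand side. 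In particular $\iint K_s\,\ud\mu_\varepsilon\,\ud\mu_\varepsilon$ is monotone nondecreasing as $\varepsilon\downarrow 0$ and converges to that same limit.

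\textbf{Step 3 (identifying the physical-space limit).} It remains to show that $\iint K_s\,\ud\mu_\varepsilon\,\ud\mu_\varepsilon \to \iint K_s\,\ud\mu\,\ud\mu$. Undoing the mollification, $\iint K_s\,\ud\mu_\varepsilon\,\ud\mu_\varepsilon = \iint g_\varepsilon(w-z)\,\ud\mu(w)\,\ud\mu(z)$ with $g_\varepsilon := K_s * (\rho_\varepsilon * \rho_\varepsilon)$, and $g_\varepsilon(x) \to K_s(x)$ for every $x \neq 0$. Since $K_s \geq 0$ is lower semicontinuous and $\mu_\varepsilon \otimes \mu_\varepsilon \rightharpoonup \mu \otimes \mu$, Fatou's lemma gives $\iint K_s\,\ud\mu\,\ud\mu \leq \liminf_\varepsilon \iint g_\varepsilon\,\ud\mu\,\ud\mu$, i.e. the left-hand side is bounded by the right-hand side. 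For the reverse inequality, when $\iint K_s\,\ud\mu\,\ud\mu < \infty$ I would invoke the uniform bound $g_\varepsilon \leq M K_s = C(d,s)\,K_s$, where $M$ is the Hardy--Littlewood maximal operator and $C(d,s) = M(|\cdot|^{-s})(e_1) < \infty$ by homogeneity and rotational invariance; this holds for each $\varepsilon$ because $\rho_\varepsilon * \rho_\varepsilon$ is nonnegative, radial, radially nonincreasing, of unit mass. Dominated convergence then gives $\iint g_\varepsilon\,\ud\mu\,\ud\mu \to \iint K_s\,\ud\mu\,\ud\mu$, and comparison with Step 2 yields the asserted equality. If instead $\iint K_s\,\ud\mu\,\ud\mu = \infty$, the inequality just proved forces the right-hand side to be infinite as well, so the identity holds trivially.

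\textbf{Main obstacle.} The delicate point is the passage to the limit through the singular kernel $K_s$: the convolved kernels $g_\varepsilon$ are \emph{not} pointwise monotone in $\varepsilon$ (the Gaussian smoothing lowers the peak near the diagonal but raises the tails), so one cannot argue by monotone convergence directly in physical space. I expect this to be the heart of the proof, resolved by two ingredients: the positivity and explicit form of $\mathscr{F}[K_s]$, which transfers monotonicity to the Fourier side and pins down the limit, and the uniform maximal-function domination $g_\varepsilon \leq C(d,s)\,K_s$, which licenses dominated convergence precisely when the energy is finite.
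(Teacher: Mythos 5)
Your proposal is correct, but there is nothing in the paper to compare it against: the paper does not prove this lemma at all. It is quoted as a classical result with a citation to Mattila's book (Lemma 12.12 there), and is then used as a black box in the proof of Proposition \ref{propmat}. What you have written is, in effect, a self-contained reconstruction of the standard textbook proof, and it follows essentially the same lines as the argument in the cited reference: establish the identity for smooth densities via the distributional Parseval identity and the explicit Fourier transform \eqref{fouritra} of the Riesz kernel, mollify a general compactly supported measure, use the positivity of $\mathscr{F}[K_s]$ together with monotone convergence to pin down the Fourier-side limit, and use the uniform domination $K_s * \rho_\varepsilon * \rho_\varepsilon \leq C(d,s)\,K_s$ (valid because the Gaussian is a radial, radially decreasing approximate identity, so the convolution is controlled by the maximal function, which on the homogeneous kernel $K_s$ is a constant multiple of $K_s$ itself) to pass to the limit in physical space when the energy is finite, the infinite-energy case following from Fatou. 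All the individual steps check out: the autocorrelation identity $\mathscr{F}[g]=|\mathscr{F}[f]|^2$ for real $f$, the evenness of $g$ needed to write $\mathscr{F}^{-1}[g]=(2\pi)^{-d}|\mathscr{F}[f]|^2$, the semigroup property making $\rho_\varepsilon * \rho_\varepsilon$ again a Gaussian, and the treatment of the diagonal (which is $\mu\otimes\mu$-null precisely when the energy is finite, so dominated convergence applies). One cosmetic remark: in Step 3 you invoke simultaneously the weak$^*$ convergence $\mu_\varepsilon\otimes\mu_\varepsilon \rightharpoonup \mu\otimes\mu$ with lower semicontinuity of $K_s$, and Fatou's lemma for $g_\varepsilon \to K_s$ against the fixed measure $\mu\otimes\mu$; either mechanism alone yields the lower bound, so you should pick one, but this is a redundancy rather than a gap.
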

	\begin{proposition}\label{propmat}
		Let $ \gamma \in \Gamma$; then $\gamma\in\Cs$ and $ \mathrm{M}_s(\gamma)=  M_s(\gamma)$ where $\mathrm{M}_s$ is as in \eqref{defps} and $M_s$ is as in Definition \ref{defFractionalMass1currents}.
	\end{proposition}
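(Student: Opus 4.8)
The plan is to realize the current $\gamma=\bigcup_{i=1}^N\gamma_i\in\Gamma$ as the distributional pairing against the $\C^d$-valued Radon measure
\[
\mu_\gamma:=\sum_{i=1}^N \tau_i\,\H^1\res\gamma_i,
\]
which is a finite vector measure since each $\gamma_i$ is a compact $C^1$ arc and $|\tau_i|=1$. First I would compute $\mathscr{F}[\gamma]$ from its definition \eqref{ftcurr}: for $\omega\in\mathcal{S}(\R^d,\LamCd)$, Lemma \ref{lemmaexconconv} gives $\mathscr{F}[\gamma](\omega)=\overline\gamma(\mathscr{F}[\omega])=\sum_i\int_{\gamma_i}\langle \mathscr{F}[\omega](x),\tau_i(x)\rangle\,d\H^1(x)$. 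Expanding $\mathscr{F}[\omega](x)=\int_{\R^d}\omega(z)e^{-ix\cdot z}\,dz$ and applying Fubini — which is licit because $\omega\in\mathcal{S}$ is integrable, $|\tau_i|=1$, and $\H^1(\gamma_i)<\infty$, so the joint integrand is dominated by $\|\omega\|_{L^1}$ on $\gamma\times\R^d$ — yields the integral representation \eqref{defAdmissibleClassFractionalMass1Current} with
\[
\widehat\gamma(z)=\int \tau\,e^{-iz\cdot x}\,d\mu_\gamma(x)=\mathscr{F}[\mu_\gamma](z),
\]
that is, $\widehat\gamma$ is the componentwise Fourier transform of $\mu_\gamma$. (Some care is needed with the duality pairing and complex conjugation between $\LamCd$ and $\C^d$, but this only permutes and conjugates components and is harmless for what follows.)

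Second, to conclude $\gamma\in\Cs$ I must check $\widehat\gamma\in\LK$. Writing $\mu_\gamma=(\mu_\gamma^1,\dots,\mu_\gamma^d)$ we have $|\widehat\gamma|^2=\sum_k|\mathscr{F}[\mu_\gamma^k]|^2$, so $\|\widehat\gamma\|_{\LK}^2$ splits as a sum over $k$ of the weighted $L^2$ norms of $\mathscr{F}[\mu_\gamma^k]$. The key quantitative input is that $\H^1\res\gamma$ has finite $s$-energy for $s\in(0,1)$: on each arc a $C^1$, simple, compact parametrization is bi-Lipschitz, so $|\gamma_i(u)-\gamma_i(v)|\ge c|u-v|$ and the diagonal contribution is controlled by $\iint_{[0,\ell]^2}|u-v|^{-s}\,du\,dv<\infty$, while the off-diagonal and inter-curve contributions are integrable (finite even near transversal crossings, since $\iint(u^2+v^2)^{-s/2}<\infty$ for $s<2$). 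Because $|\tau^k(x)\tau^k(y)|\le 1$, each energy $\iint|x-y|^{-s}\,d\mu_\gamma^k\,d\mu_\gamma^k$ is bounded in absolute value by this finite number, and Lemma \ref{lemmaMattila99} then forces $\int|\mathscr{F}[\mu_\gamma^k]|^2|\xi|^{s-d}\,d\xi<\infty$; summing over $k$ gives $\widehat\gamma\in\LK$ and hence $\gamma\in\Cs$.

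Third, for the equality of masses I would apply Lemma \ref{lemmaMattila99} componentwise. Since $\tau(x)\cdot\tau(y)=\sum_k\tau^k(x)\tau^k(y)$, the real-space mass \eqref{defps} reads $\mathrm{M}_s(\gamma)=\sum_k\iint|x-y|^{-s}\,d\mu_\gamma^k(x)\,d\mu_\gamma^k(y)$. The measures $\mu_\gamma^k$ are signed, whereas Lemma \ref{lemmaMattila99} is stated for nonnegative measures, so the one genuinely new step is to extend the identity to signed measures by polarization: writing $\mu_\gamma^k=\mu_+-\mu_-$ via its Jordan decomposition, applying the lemma to the nonnegative measures $\mu_+$, $\mu_-$ and $\mu_++\mu_-$, and combining these using bilinearity of the energy together with $\mathscr{F}[\mu_\gamma^k]=\mathscr{F}[\mu_+]-\mathscr{F}[\mu_-]$, one obtains $\iint|x-y|^{-s}\,d\mu_\gamma^k\,d\mu_\gamma^k=(2\pi)^{-d}c(s,d)\int|\mathscr{F}[\mu_\gamma^k]|^2|\xi|^{s-d}\,d\xi$, all cross terms being finite by the energy bound of the previous step. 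Summing over $k$ and recalling $|\widehat\gamma|^2=\sum_k|\mathscr{F}[\mu_\gamma^k]|^2$ gives $\mathrm{M}_s(\gamma)=(2\pi)^{-d}c(s,d)\int|\widehat\gamma|^2|\xi|^{s-d}\,d\xi=(2\pi)^{-d}\|\widehat\gamma\|_{\LK}^2=M_s(\gamma)$.

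I expect the main obstacle to be the two places where the cited scalar result must be upgraded: identifying $\widehat\gamma$ with $\mathscr{F}[\mu_\gamma]$ cleanly through the complex-valued Schwartz pairing, and extending the energy–Fourier identity of Lemma \ref{lemmaMattila99} from nonnegative to the signed (vector) setting. Both rest on first securing the finiteness of the $s$-energy of $\H^1\res\gamma$, which is the one point where the $C^1$ regularity and the restriction $s<1$ are genuinely used.
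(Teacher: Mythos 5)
Your proof is correct and follows essentially the same route as the paper's: realize $\gamma$ as the vector measure $\mu=\tau\,\H^1\res\gamma$ with components $\mu_k$, identify the representative $\widehat\gamma=\mathscr{F}[\mu]$ through the Fubini computation on the Schwartz pairing, and apply Lemma \ref{lemmaMattila99} componentwise to equate $\mathrm{M}_s(\gamma)$ with $(2\pi)^{-d}\|\widehat\gamma\|_{\LK}^2$. The only difference is that you explicitly justify two points the paper leaves implicit --- the finiteness of the $s$-energy of $\H^1\res\gamma$ (via the bi-Lipschitz lower bound for simple compact $C^1$ arcs) and the extension of the energy--Fourier identity from nonnegative to signed measures by Jordan decomposition and polarization --- both of which are correct and indeed needed for the paper's argument to be complete.
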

	\begin{proof} Let $ \gamma \in \Gamma$ and $N \in \N$ as in \eqref{defGamma}. With a slight abuse of notation we identify $\gamma$ with its support, define for all $ k= 1,\dots,d$ the Radon measure $\mu_k(A):=  \int_{A\cap \gamma} \tau \cdot e_k\, d \mathcal{H}^1$ and set $\mu \coloneqq (\mu_1, \, \cdots, \mu_d)$. 
		%\mrestr \gamma_i $
%		 where $\{e_k\}_{k=1,\dots,d}$ is the canonical basis of $\R^d$ 
By Lemma \ref{lemmaMattila99}
		we can write
		\begin{align}
				\mathrm{M}_s(\gamma) &= \sum_{i,j=1}^N \int_{\gamma_i } \int_{\gamma_j} \frac{   \tau_i(x)  \cdot \tau_j(y)}{\vert y-x \vert^s} \, d \mathcal{H}^1(x) d \mathcal{H}^1(y)\\
				& =  \sum_{k=1}^d \int_{\gamma} \int_{\gamma} \frac{(e_k \cdot \tau(x) )(e_k \cdot \tau(y))}{\vert x-y \vert^s}\,  d \mathcal{H}^1(x) d \mathcal{H}^1(y) \\
				&=  \sum_{k=1}^d \int_{\R^d} \int_{\R^d}  \frac{1}{\vert x -y\vert^s}\, d \mu_k(x) d \mu_k(y) \nonumber\\
				&= \sum_{k=1}^d (2\pi)^{-d}c(s,d) \int_{\R^d} \frac{1}{\vert x \vert^{d-s}} \vert \mathscr{F}[\mu_k](x) \vert^2 \, dx
				\\
				&= (2\pi)^{-d}c(s,d) \int_{\R^d} \frac{1}{\vert x \vert^{d-s}} \vert \mathscr{F}[\mu](x) \vert^2 \, dx. \label{computationFracMassCsNorm}
		\end{align}
		Moreover, since $\gamma \in \Gamma$ can be identified with the 1-current $\mu$, we may observe that
		\begin{equation*}
			\mathscr{F}[\gamma](\omega) \coloneqq \overline \gamma (\mathscr{F}[\omega]) = \int_{\R^d} \langle \mathscr{F}[\omega], \mu \rangle \,dx = \int_{\R^d} \langle \omega, \mathscr{F}[\mu] \rangle \,dx 
%			= \int_{\mathbb{R}^d} \langle \omega(x), \widehat \gamma(x) \rangle \,dx   
		\end{equation*}
		for any $\omega \in \mathcal{S}(\R^d, \LamCd)$ and the representative $\widehat \gamma$ of $\mathscr{F}[\gamma]$ is equal to $\mathscr{F}[\mu]$. 
%		\begin{equation*}
%			\widehat \gamma(x) \coloneqq \int_{\mathbb{R}^d} \tau(y) e^{-i x \cdot y} \,d\mathcal{H}^1\lfloor_{\gamma}(y).
%		\end{equation*}
		 Finally, from \eqref{computationFracMassCsNorm}, we conclude that $\widehat{\gamma} \in \LK$ and thus the claim holds true.
	\end{proof}
	
%	\fumihiko{12/12/2023. The statement on the first variation has been moved to the place right after Proposition \ref{propositionFracPerivsFracMass} because it is more natural to show the first variation after we state the definition of fractional mass for smooth curves.}
%	
%	\fumihiko{03/12/2023. I'll add the first variation of the fractional mass for smooth curves without self-intersections.}
%	
%	\fumihiko{05/12/2023. I added a part of the computations for the first variation for simple curves.}
	
	 The next lemma shows that the convolution of a 1-current $\gamma$ in $\mathcal{C}_s$ with the standard mollifiers converge to $\gamma$ in the weak star topology of $\Curr$ and that the $s$-fractional mass is continuous under this convergence. Before proving the lemma we recall that $\{\gamma_n\}_{n \in \N} \subset \Curr$ converges to $\gamma$ in the weak star topology of $\Curr$ if $\gamma_n(\omega) \to \gamma(\omega)$ as $n \to \infty$ for any $\omega \in \mathcal{D}^1$.
%	\fumihiko{02/12/2023. We should give the definition of the convergence in the weak star topology of $\Curr$ for clarity, if we do not have the one in the paper (although it may be standard in this context).}
%	
%	\fumihiko{10/12/2023. I added the definition of the weak star convergence in $\Curr$ as above.}
	\begin{lemma}\label{Lemmaconvolution}
		Let $ \gamma \in \mathcal{C}_s$. Then $ \gamma * \rho_{\varepsilon} \to \gamma$ as $ \varepsilon \rightarrow 0$ in the weak star topology of $\Curr$ and $  \| \gamma * \rho_{\varepsilon} \|_{\mathcal{C}_s} \rightarrow \| \gamma \|_{\mathcal{C}_s}$ as $ \varepsilon \rightarrow 0$.
	\end{lemma}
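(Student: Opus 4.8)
The plan is to establish the two assertions separately. The weak-star convergence $\gamma*\rho_\varepsilon\to\gamma$ in $\Curr$ comes essentially for free from Lemma~\ref{lemmaexconconv}: testing the weak-star convergence $\overline\gamma*\rho_\varepsilon\to\overline\gamma$ in $C_0^\infty(\R^d,\LamCd)^*$ against an arbitrary $\omega\in\mathcal{D}^1\subset C_0^\infty(\R^d,\LamCd)$, and recalling that $\overline{\gamma*\rho_\varepsilon}=\overline\gamma*\rho_\varepsilon$ restricts to $\gamma*\rho_\varepsilon$ on $\mathcal{D}^1$ while $\overline\gamma$ restricts to $\gamma$, one gets $(\gamma*\rho_\varepsilon)(\omega)\to\gamma(\omega)$ for all $\omega\in\mathcal{D}^1$. (Equivalently, from \eqref{convgamma} one has $(\gamma*\rho_\varepsilon)(\omega)=\gamma(\rho_\varepsilon*\omega)$ and $\rho_\varepsilon*\omega\to\omega$ in $\mathcal{D}^1$, so the continuity of the current $\gamma$ suffices.)

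The substance of the lemma is the convergence of the norms, for which the key step is to identify the Fourier representative $\widehat{\gamma*\rho_\varepsilon}$. Observe first that $\gamma*\rho_\varepsilon$ has compact support (inside a fixed neighborhood of $\supp\gamma$ for $\varepsilon<1$), so $\overline{\gamma*\rho_\varepsilon}$ is defined and equals $\overline\gamma*\rho_\varepsilon$ by Lemma~\ref{lemmaexconconv}. Then, for every $\omega\in\mathcal{S}(\R^d,\LamCd)$, combining \eqref{ftcurr}, \eqref{conv123} and the convolution theorem $\rho_\varepsilon*\mathscr{F}[\omega]=(2\pi)^d\,\mathscr{F}\big[\mathscr{F}^{-1}[\rho_\varepsilon]\,\omega\big]$, I would compute
\[
\mathscr{F}[\gamma*\rho_\varepsilon](\omega)=\overline\gamma\big(\rho_\varepsilon*\mathscr{F}[\omega]\big)=(2\pi)^d\,\mathscr{F}[\gamma]\big(\mathscr{F}^{-1}[\rho_\varepsilon]\,\omega\big).
\]
Since $\mathscr{F}^{-1}[\rho_\varepsilon]\,\omega\in\mathcal{S}(\R^d,\LamCd)$, the integral representation of Definition~\ref{setC_s} and the $\C$-bilinearity of the pairing $\langle\cdot,\cdot\rangle$ let me move the scalar factor onto $\widehat\gamma$, yielding
\[
\mathscr{F}[\gamma*\rho_\varepsilon](\omega)=\int_{\R^d}\big\langle\omega(x),\,m_\varepsilon(x)\,\widehat\gamma(x)\big\rangle\,dx,\qquad m_\varepsilon(x):=(2\pi)^d\mathscr{F}^{-1}[\rho_\varepsilon](x)=\mathscr{F}[\rho_\varepsilon](-x).
\]
By uniqueness of the representative this shows $\gamma*\rho_\varepsilon\in\Cs$ with $\widehat{\gamma*\rho_\varepsilon}=m_\varepsilon\,\widehat\gamma$.

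It then remains to pass to the limit. Since $|\mathscr{F}[\rho_\varepsilon]|\le\|\rho_\varepsilon\|_{L^1}=1$, we have $|m_\varepsilon|\le1$; and since $\rho_\varepsilon\to\delta_0$ in the sense of distributions, $m_\varepsilon(x)=\mathscr{F}[\rho_\varepsilon](-x)\to1$ pointwise as $\varepsilon\to0$ (for the usual scaling $\rho_\varepsilon=\varepsilon^{-d}\rho(\cdot/\varepsilon)$ one even has $m_\varepsilon(x)=\mathscr{F}[\rho](-\varepsilon x)$). Consequently $|\widehat{\gamma*\rho_\varepsilon}(x)|^2K_{d-s}(|x|)\le|\widehat\gamma(x)|^2K_{d-s}(|x|)$, which is integrable precisely because $\widehat\gamma\in\LK$, while the left-hand side converges pointwise to $|\widehat\gamma(x)|^2K_{d-s}(|x|)$; dominated convergence then gives $\|\gamma*\rho_\varepsilon\|_{\Cs}^2\to\|\gamma\|_{\Cs}^2$, as claimed. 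I expect the main obstacle to be exactly the multiplier identity $\widehat{\gamma*\rho_\varepsilon}=m_\varepsilon\widehat\gamma$: one must correctly track the factors of $(2\pi)^d$ and the reflection $\xi\mapsto-\xi$ arising from $\mathscr{F}^{-1}$ in the convolution theorem, and check at each step that the tested objects lie in the spaces where Lemma~\ref{lemmaexconconv} and the representation in Definition~\ref{setC_s} are available; once the multiplier bounds $|m_\varepsilon|\le1$ and $m_\varepsilon\to1$ are secured, the norm convergence is a routine dominated-convergence argument.
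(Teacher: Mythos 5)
Your proof is correct and follows essentially the same route as the paper: both identify the Fourier representative of $\gamma*\rho_\varepsilon$ as a bounded multiplier times $\widehat{\gamma}$, observe that the multiplier is bounded and tends to $1$ pointwise, and conclude the norm convergence by dominated convergence (the weak-star part being the standard mollification argument). If anything, your bookkeeping is slightly more careful than the paper's, whose computation ends with the multiplier $\mathscr{F}^{-1}[\rho_\varepsilon]$ and the claim $\mathscr{F}^{-1}[\rho_\varepsilon]\to 1$ (off by a factor $(2\pi)^d$ under the paper's own normalization, and implicitly using that $\rho_\varepsilon$ is even), whereas your $m_\varepsilon(x)=\mathscr{F}[\rho_\varepsilon](-x)$ correctly satisfies $|m_\varepsilon|\le 1$ and $m_\varepsilon\to 1$.
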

	\begin{proof}
		The proof that $ \gamma * \rho_{\varepsilon} \to \gamma$ as $ \varepsilon \rightarrow 0$ in the weak star topology of $\Curr$ is a standard result in the theory of currents. Hence we only prove that $ \| \gamma * \rho_{\varepsilon} \|_{\mathcal{C}_s}
		 \rightarrow \| \gamma \|_{\mathcal{C}_s}$ as $ \varepsilon \rightarrow 0$.
		 
		 We first check that $\gamma * \rho_{\varepsilon} \in \mathcal{C}_s$. We set $k(x) \coloneqq |x|^{-\frac{d-s}{2}}$ for $x \in \R^d \setminus \{0\}$. Since $\gamma \in \mathcal{C}_s$, we can choose the unique representative $\widehat{\gamma} \in \LK$ as in Definition \ref{setC_s}. Noticing that $k \widehat{\gamma} \in L^2(\R^d, \C^d)$, then we may compute the Fourier transform of $\gamma * \rho_{\varepsilon}$ as follows:
		 \begin{align*}
		 	\mathscr{F}[\gamma* \rho_{\varepsilon}](\omega)=  \overline{\gamma * \rho_{\varepsilon}}(\mathscr{F}[\omega]) = \overline{\gamma}(\rho_{\varepsilon} *  \mathscr{F}[\omega]) 
		 	&= \overline{\gamma}\left( \mathscr{F}\left[ \mathscr{F}[\rho_{\varepsilon}]\, \omega \right] \right) \nonumber\\
		 	&= \int_{\R^d} \langle  \mathscr{F}[\rho_{\varepsilon}]\,k^{-1}\omega(x), k\widehat{\gamma}(x) \rangle \,dx \nonumber\\
		 	&= \int_{\R^d} \langle  \mathscr{F}\left[ \rho_{\varepsilon} * \mathscr{F}^{-1}[k^{-1}\omega] \right], k\widehat{\gamma} \rangle \,dx \nonumber\\
		 	&= \int_{\R^d} \langle  \mathscr{F}^{-1}[k^{-1}\omega], \rho_{\varepsilon}*\mathscr{F}\left[k\widehat{\gamma} \right] \rangle \,dx \nonumber\\
		 	&= \int_{\R^d} \langle \omega ,  \mathscr{F}^{-1}[\rho_{\varepsilon}] \widehat{\gamma} \rangle \,dx 
%		 	= \int_{\R^d} \langle \rho_{\varepsilon} * \omega(x), \widehat{\gamma}(x) \rangle \,dx = \int_{\R^d} \langle \omega(x), \rho_{\varepsilon} * \widehat{\gamma}(x) \rangle \,dx.
		 \end{align*}
		 for any $\omega \in \mathcal{S}(\R^d, \LamCd)$. 
		 Therefore, the (unique) representative of $\mathscr{F}[\gamma * \rho_{\varepsilon}]$ is $\mathscr{F}^{-1}[\rho_{\varepsilon}]\widehat{\gamma}$. Moreover, since $ \widehat{\gamma} \in \LK$ it follows  that $\mathscr{F}^{-1}[ \rho_{\varepsilon}]\,\widehat{\gamma} \in \LK$, i.e.,   $\gamma * \rho_{\varepsilon} \in \mathcal{C}_s$. Finally, since   for any $\xi \in \R^d$ it holds true that $\mathscr{F}^{-1}[\rho_{\varepsilon}](\xi) \to 1$ as $\varepsilon \to 0$, we conclude
		 \begin{equation*}
		 	\|\mathscr{F}^{-1}[ \rho_{\varepsilon}]\,\widehat{\gamma}\|_{\LK} \to \|\widehat{\gamma}\|_{\LK} \eqqcolon \|\gamma\|_{\mathcal{C}_s} \qquad \text{ as } \e\to 0\, .
		 \end{equation*}
	\end{proof}

	\begin{proposition}[Lower semi-continuity]\label{lose}
	Let $U \subset \R^d$ be open and bounded. Let  $\{\gamma_n\}_{n \in \N} \subset \Cs(U)$  be a sequence converging to some $\gamma \in \Cs(U)$ in the weak star topology of $\mathcal{D}_1$.  Then, $M_s(\gamma) \leq \liminf_{n \rightarrow \infty}  M_s(\gamma_n)$. 
	\end{proposition}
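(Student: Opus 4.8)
The plan is to read off from Definition \ref{setC_s} that $\|\gamma\|_{\Cs}=\|\widehat\gamma\|_{\LK}$ is the norm of the Hilbert space $\LK$ evaluated at the Fourier representative $\widehat\gamma$, and that $M_s=(2\pi)^{-d}\|\cdot\|_{\Cs}^2$ is an increasing function of this norm. Hence the statement reduces to the weak lower semicontinuity of a Hilbert-space norm, once the weak-star convergence $\gamma_n\weakstar\gamma$ in $\Curr$ is converted into weak convergence of the representatives $\widehat{\gamma_n}$ in $\LK$. First I would dispose of the trivial case: if $\liminf_n M_s(\gamma_n)=+\infty$ there is nothing to prove, so I may pass to a subsequence (not relabelled) along which $M_s(\gamma_n)\to\liminf_n M_s(\gamma_n)<\infty$ and, in particular, $\sup_n\|\widehat{\gamma_n}\|_{\LK}<\infty$. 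Since $\LK$ is a Hilbert space, bounded sequences admit weakly convergent subsequences, so a further (not relabelled) subsequence satisfies $\widehat{\gamma_n}\weakly g$ weakly in $\LK$ for some $g\in\LK$.

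The central step is to identify $g=\widehat\gamma$. For any $\omega\in\mathcal{S}(\R^d,\LamCd)$ I set $h$ to be (the complex conjugate dictated by the inner product of $\LK$ applied to) $c(s,d)^{-1}\,|x|^{d-s}\,\omega$. Because $d-s>0$, the weight $|x|^{d-s}$ is locally integrable and grows only polynomially while $\omega$ decays rapidly, so $\int_{\R^d}|h|^2 K_{d-s}\ud x=c(s,d)^{-2}\int_{\R^d}|\omega|^2|x|^{d-s}\ud x<\infty$, i.e. $h\in\LK$. By construction the weighted inner product against $h$ reproduces the flat pairing, $\langle\widehat{\gamma_n},h\rangle_{\LK}=\int_{\R^d}\langle\omega,\widehat{\gamma_n}\rangle\ud x=\mathscr{F}[\gamma_n](\omega)$, so weak convergence in $\LK$ gives $\mathscr{F}[\gamma_n](\omega)\to\int_{\R^d}\langle\omega,g\rangle\ud x$. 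On the other hand, since every $\supp\gamma_n\subset U$ with $U$ bounded, I fix a single cutoff $\varphi\in C_c^\infty(\R^d)$ with $\varphi\equiv 1$ on a neighbourhood of $\overline U$; then $\mathscr{F}[\gamma_n](\omega)=\overline{\gamma_n}(\mathscr{F}[\omega])=\gamma_n(\varphi\,\mathscr{F}[\omega])$ with $\varphi\,\mathscr{F}[\omega]\in\mathcal{D}^1$ a \emph{fixed} test form, so the weak-star convergence in $\Curr$ gives $\mathscr{F}[\gamma_n](\omega)\to\gamma(\varphi\,\mathscr{F}[\omega])=\mathscr{F}[\gamma](\omega)=\int_{\R^d}\langle\omega,\widehat\gamma\rangle\ud x$. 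Comparing the two limits, $\int_{\R^d}\langle\omega,g-\widehat\gamma\rangle\ud x=0$ for every $\omega\in\mathcal{S}(\R^d,\LamCd)$, and since $g-\widehat\gamma\in L^1_{\mathrm{loc}}$ (as $\LK\hookrightarrow L^1_{\mathrm{loc}}$ by Cauchy--Schwarz against $|x|^{d-s}$ on bounded sets), the fundamental lemma of the calculus of variations forces $g=\widehat\gamma$ almost everywhere.

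Finally, weak lower semicontinuity of the norm of the Hilbert space $\LK$ yields $\|\widehat\gamma\|_{\LK}=\|g\|_{\LK}\le\liminf_n\|\widehat{\gamma_n}\|_{\LK}$; squaring and multiplying by $(2\pi)^{-d}$ gives $M_s(\gamma)\le\liminf_n M_s(\gamma_n)$ along the chosen subsequence, which by construction equals the original $\liminf$. The main obstacle is precisely the identification $g=\widehat\gamma$: it is the point where the two different topologies — weak-star in $\Curr$ and weak in $\LK$ — must be bridged through the Fourier transform, and the uniform choice of the cutoff $\varphi$ (available only because all the $\gamma_n$ share the common bounded support $U$) is what makes the current-side limit legitimate for a single, $n$-independent test form.
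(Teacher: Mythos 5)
Your proof is correct and follows essentially the same route as the paper's: extract a weakly convergent subsequence of the Fourier representatives (the paper works with $k\,\widehat{\gamma}_n$ in unweighted $L^2$ where $k(x)=|x|^{-\frac{d-s}{2}}$, you work directly in the Hilbert space $\LK$, which is the same thing up to an isometry), identify the weak limit with $\widehat{\gamma}$ by testing against Schwartz forms and using the weak-star convergence of the currents, and conclude by weak lower semicontinuity of the Hilbert norm. Your version even makes explicit the $n$-independent cutoff $\varphi$ justifying $\overline{\gamma}_n(\mathscr{F}[\omega])\to\overline{\gamma}(\mathscr{F}[\omega])$, a step the paper's chain of equalities leaves implicit.
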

	
	\begin{proof}
		We may assume that $\liminf_{n \rightarrow \infty}  M_s(\gamma_n) < \infty$; otherwise the claim is obvious and, up to passing to
		a further subsequence, that the liminf is actually a limit. 
		We set $k(x)= (\mathscr{F}[K_s](x))^{1/2}=|x|^{-\frac{d-s}{2}}$. Then it follows that $k \, \widehat\gamma_n \in L^2(\Rd, \C^d)$ have uniformly bounded norm and hence, up to a subsequence, 
		they weakly converge to some $\sigma  \in L^2(\Rd, \C^d)$. We set $\widetilde \sigma := k^{-1} \sigma \in \LK$; now the claim follows
		from the lower semicontinuity of the norm, once we check that $\widetilde{\sigma}$ is the representative of $\mathscr{F}[\gamma]$. To this end, let $\omega \in \mathcal{S}(\R^d, \LamCd)$. From the Fourier inversion theorem, there exists  a function $\eta \in \mathcal{S}(\R^d, \LamCd)$ such that $\omega = \overline{\mathscr{F}[\eta]}^*$. Then, recalling the notion of the extension of 1-currents as in Lemma \ref{lemmaexconconv}, we can compute $\mathscr{F}[\gamma]$ as follows:
%		Then, since $\mathscr{F}[\omega] \in C_0^{\infty}(\R^d, \Lambda^1)$, we can choose, by density, a sequence $\{\eta_\ell\}_{\ell} \subset C_c^{\infty}(\R^d, \Lambda^1)$ such that $\eta_{\ell} \to \mathscr{F}[\omega]$ as $\ell \to +\infty$ in $C^{\infty}$. Then, we have
%		\begin{multline*}
%			%	\int_V\langle \varphi_\gamma, \tilde \omega \rangle dx = 
%			\langle \widehat\gamma,  \mathscr{F}[\omega] \rangle = (2\pi)^d \langle \gamma, \omega \rangle
%			= (2\pi)^d  \lim_{n\to +\infty} \langle  \gamma_n, \omega \rangle
%			=   \lim_{n\to +\infty} \langle \widehat\gamma_n,\mathscr{F}[\omega] \rangle
%			\\
%			= \lim_{n\to +\infty} \langle \widehat\gamma_n k, \widehat\omega k^{-1} \rangle
%			= \langle \widetilde \sigma k,  \mathscr{F}[\omega] k^{-1}\rangle 
%			= \langle \widetilde \sigma ,  \mathscr{F}[\omega] \rangle \, .
%		\end{multline*}
		\begin{align*}
			\mathscr{F}[\gamma](\omega) = \mathscr{F}[\gamma](\overline{\mathscr{F}[\eta]}^*) = (2\pi)^d \, \overline{\gamma}(\overline{\eta}^*) &= \lim_{n \to +\infty} (2\pi)^d \,\overline{\gamma}_n(\overline{\eta}^*)  \nonumber\\
			&= \lim_{n \to +\infty} \mathscr{F}[\gamma_n](\overline{\mathscr{F}[\eta]}^*) \nonumber\\
			&= \lim_{n \to +\infty} \int_{\R^d} \langle k^{-1}\,\overline{\mathscr{F}[\eta]}^*, k\,\widehat{\gamma}_n \rangle \,dx \nonumber\\
			&= \int_{\R^d} \langle \overline{\mathscr{F}[\eta]}^*, \widetilde \sigma \rangle \,dx = \int_{\R^d} \langle \omega, \widetilde \sigma \rangle \,dx.
		\end{align*}
%		Here we have used the weak $L^2$-convergence of $\{\widehat{\gamma_n}\,k\}_n$ in the sixth equality. 
This concludes the proof  that $\widetilde{\sigma}$ is the representative of $\mathscr{F}[\gamma]$. 
		
		%Writing $\varphi_n := \varphi_{\gamma_n}$ and $k :=( \mathscr{F}[K])^{1/2}$, 
	\end{proof}

\section{Asymptotic behavior of the $s$-fractional mass and \\ $s$-fractional curvature for smooth curves as $s \uparrow 1$}
%	\fumihiko{01/12/2023. I added the statement about the limit $s \uparrow 1$ if $\gamma$ is smooth.} 
%	
%	\fumihiko{04/12/2023. I slightly modified the proof.}
%	
%	\fumihiko{08/12/2023. I did minor changes of this section.}
%	
%	\fumihiko{11/12/2023. Andrea, please revise the computation of the limit as you mentioned today.}
%	
%	\fumihiko{12/12/2023. Thanks for editing, Andrea!}
	
	Now we show that our definition of $s$-fractional mass is consistent with the notion of classical length of a smooth curve as the parameter $s$ goes to 1. 

	\begin{proposition}\label{limitFracMassSto1SmoothCurves}
		Let $\ell>0$ and let $\gamma: [0,\,\ell] \to \mathbb{R}^d$ be an arc-length parametrization of a simple $C^{1,1}$-curve (not necessarily closed). 
		%such that its second derivative $\ddot{\gamma} \in L^p([0,\,\ell]; \mathbb{R}^d)$ exists in the sense of distribution for some $1 \leq p \leq +\infty$.
%		 and $(\sqrt{2} + 1)\ell^{1/q}\|\ddot{\gamma}\|_{L^p} < 1$ where $q\geq 1$ satisfies $1/p + 1/q = 1$.
%		  \fumihiko{Is the norm bound a natural assumption?} \andrea{I think yes, because $\ddot{\gamma} \in L^{\infty} \implies3\ell^{1+1/q}\|\ddot{\gamma}\|_{L^p} \leq 1 $  } \fumihiko{OK, I made a minor change on the assumption, but that may be reasonable.}
		Then,
		\begin{equation*}
			\lim_{s \uparrow 1}(1-s) \, M_s(\gamma) = 2\mathcal{H}^1(\gamma) (=2\ell).
		\end{equation*}
	\end{proposition}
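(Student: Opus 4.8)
The plan is to reduce the statement to an explicit one-dimensional double integral and then carry out a Bourgain--Brezis--Mironescu type concentration analysis near the diagonal. Since $\gamma$ is a simple $C^{1,1}$-curve it is in particular $C^1$ and hence belongs to the class $\Gamma$, so by Proposition \ref{propmat} we have $M_s(\gamma)=\mathrm M_s(\gamma)=\int_0^\ell\int_0^\ell |\gamma(u)-\gamma(v)|^{-s}\,\dot\gamma(u)\cdot\dot\gamma(v)\,du\,dv$, where $|\dot\gamma|\equiv 1$ because $\gamma$ is parametrized by arc length (so also $\mathcal H^1(\gamma)=\ell$). I would fix a small $\delta>0$ and split the square $[0,\ell]^2$ into the near-diagonal region $\{|u-v|<\delta\}$ and the far region $\{|u-v|\ge\delta\}$, analyzing the two contributions to $(1-s)\mathrm M_s(\gamma)$ separately.

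For the far region, simplicity of $\gamma$ together with compactness of $[0,\ell]^2$ and continuity yields a strictly positive lower bound $\inf_{|u-v|\ge\delta}|\gamma(u)-\gamma(v)|=:c_\delta>0$, while $|\dot\gamma(u)\cdot\dot\gamma(v)|\le 1$; hence this contribution is bounded in absolute value by $(1-s)\,\ell^2 c_\delta^{-s}$, which tends to $0$ as $s\uparrow 1$ for each fixed $\delta$.

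The heart of the matter is the near-diagonal region. Writing $h=u-v$ and using that $\dot\gamma$ is Lipschitz of unit length, I would establish the two uniform (for $|h|<\delta$) expansions $\dot\gamma(u)\cdot\dot\gamma(v)=1-\tfrac12|\dot\gamma(u)-\dot\gamma(v)|^2=1+O(h^2)$ and, from the identity $|\gamma(v+h)-\gamma(v)|^2=\int_0^h\int_0^h \dot\gamma(v+t)\cdot\dot\gamma(v+r)\,dt\,dr=h^2+O(h^4)$, the comparison $|\gamma(u)-\gamma(v)|^{-s}=|h|^{-s}(1+O(h^2))$. Consequently the integrand equals $|h|^{-s}(1+O(h^2))$. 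The error term then contributes at most $(1-s)\,C\ell\int_{-\delta}^{\delta}|h|^{2-s}\,dh=(1-s)\,C\ell\,\tfrac{2}{3-s}\delta^{3-s}$, which vanishes as $s\uparrow1$. For the main term I would compute the inner integral $\int_{\{u\in[0,\ell]:\,|u-v|<\delta\}}|u-v|^{-s}\,du$ explicitly: it equals $\tfrac{2\delta^{1-s}}{1-s}$ for $v\in[\delta,\ell-\delta]$ and is strictly smaller (one-sided) for $v$ within $\delta$ of an endpoint. Multiplying by $(1-s)$ and integrating in $v$ yields $2(\ell-2\delta)\delta^{1-s}$ plus an endpoint remainder that is $O(\delta)$ uniformly in $s$; letting $s\uparrow1$ gives $2(\ell-2\delta)+O(\delta)$.

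Putting the pieces together, for each fixed $\delta>0$ the limit $\lim_{s\uparrow1}(1-s)\mathrm M_s(\gamma)$ exists and equals $2\ell+\rho(\delta)$ with $\rho(\delta)\to0$ as $\delta\to0$; since the left-hand side does not depend on $\delta$, it must equal $2\ell=2\mathcal H^1(\gamma)$, as claimed. The main obstacle is the near-diagonal analysis at the $C^{1,1}$ regularity level: one cannot Taylor-expand to second order pointwise, so the quadratic cancellation in $|\gamma(u)-\gamma(v)|^2$ (which is what makes the second-order term vanish and keeps the constant equal to $2$ rather than something curvature-dependent) must be extracted from the integral identity above using only the Lipschitz bound on $\dot\gamma$ and $|\dot\gamma|\equiv1$, and all the $O(h^2)$ estimates must be made uniform so that they survive the interplay between the $\delta\to0$ and $s\uparrow1$ limits.
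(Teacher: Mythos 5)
Your proof is correct, and while the high-level strategy (reduce everything to the model integral $\iint |u-v|^{-s}\,du\,dv$ and show all corrections die under the prefactor $(1-s)$) is the same as the paper's, the implementation differs in two genuine ways. First, the paper does not split into near/far regions: it expands globally on $[0,\ell]^2$, writing $|\gamma(v)-\gamma(u)|^2=(v-u)^2\left(1+2(v-u)I_1+(v-u)^2I_2\right)$ by Taylor's formula with integral remainder (legitimate at $C^{1,1}$ regularity since $\ddot\gamma\in L^\infty$ exists a.e.), keeps the resulting first-order term involving $I_1$ in the expansion of $|\gamma(v)-\gamma(u)|^{-s}$, shows that this term and the one coming from $\dot\gamma(u)\cdot\dot\gamma(v)=1-\tfrac12|\dot\gamma(u)-\dot\gamma(v)|^2$ yield integrals that are \emph{uniformly bounded} in $s$ (hence killed by $(1-s)$), and computes the main term exactly as $\frac{2\ell^{2-s}}{(1-s)(2-s)}$ over the whole square. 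Second, your near-diagonal estimate is obtained by a different and arguably more robust device: the identity $|\gamma(v+h)-\gamma(v)|^2=\int_0^h\int_0^h\dot\gamma(v+t)\cdot\dot\gamma(v+r)\,dt\,dr$ combined with the polarization identity for unit vectors gives $|\gamma(u)-\gamma(v)|^2=(u-v)^2\left(1+O(|u-v|^2)\right)$ using only the Lipschitz bound on $\dot\gamma$, never invoking $\ddot\gamma$; in the paper's notation this encodes the fact (not needed there) that $I_1=O(|v-u|)$ because $\ddot\gamma\cdot\dot\gamma=0$ a.e. This buys you a cleaner error $O(|h|^{2-s})$ instead of the paper's $O(|h|^{1-s})$, at the cost of the $\delta$-bookkeeping (far-region bound via simplicity, one-sided endpoint corrections, final sandwich). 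One small presentational fix: for fixed $\delta$ you have not actually shown that $\lim_{s\uparrow 1}(1-s)\mathrm{M}_s(\gamma)$ exists (the endpoint remainder's limit in $s$ is not computed), so the last step should be phrased as the two-sided bound $2(\ell-2\delta)\le\liminf_{s\uparrow1}(1-s)\mathrm{M}_s(\gamma)\le\limsup_{s\uparrow1}(1-s)\mathrm{M}_s(\gamma)\le 2(\ell-2\delta)+C\delta$, followed by $\delta\to0$; this is exactly what your estimates establish.
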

	\begin{proof}
		Since $\gamma$ is of class $C^{1,1}$ (and hence $\ddot{\gamma}\in L^\infty$), by the foundamental theorem of calculus we have
		\begin{equation*}
			\gamma(v) - \gamma(u) = (v-u)\,\dot{\gamma}(u) + (v-u)^2 \int_{0}^{1}(1-t)\ddot{\gamma}(u+t(v-u))\,dt 
		\end{equation*}
		and 
		\begin{equation}\label{meanValueGammaDdot}
			\dot{\gamma}(v) - \dot{\gamma}(u) = (v-u)\,\int_{0}^{1} \ddot{\gamma}(u + t(v-u)) \,dt 
		\end{equation}
		for any $u,\,v \in [0,\,\ell]$. Thus, recalling that $|\dot \gamma (u)|^2=1$ we obtain
		\begin{align}
			|\gamma(v) - \gamma(u)|^2 &= (v-u)^2 + 2(v-u)^3 \, \int_{0}^{1}(1-t)(\ddot{\gamma}(u+t(v-u)) \cdot \dot{\gamma}(u) ) \,dt \nonumber\\
			&\qquad + (v-u)^4 \, \left|\int_{0}^{1}(1-t)\ddot{\gamma}(u+t(v-u))  \,dt \right|^2 \nonumber\\
			&= (v-u)^2 \left(1 + 2(v-u) \, I_1(u,v,\gamma) + (v-u)^2\,I_2(u,v,\gamma) \right) \label{expansionGamma}
		\end{align}
		for any $u,\,v \in [0,\,\ell]$ where we defined $I_1$ and $I_2$ as
		\begin{equation*}
			I_1(u,v,\gamma) \coloneqq \int_{0}^{1}(1-t)(\ddot{\gamma}(u+t(v-u)) \cdot \dot{\gamma}(u) ) \,dt \quad \text{and} \quad I_2(u,v,\gamma) \coloneqq \left|\int_{0}^{1}(1-t)\ddot{\gamma}(u+t(v-u))  \,dt \right|^2
		\end{equation*}
		for $u,\,v \in [0,\,\ell]$.
%		 Since $(\sqrt{2}+1)\ell^{\frac{1}{q}}\|\ddot{\gamma}\|_{L^p} < 1$, 
%	We can bound the integrals $I_1$ and $I_2$ as follows:  
%		\begin{equation*}
%			|I_1(u,v,\gamma)| \leq \frac{1}{|v-u|}\left|\int_{u}^{v}  |\ddot{\gamma}(w) \cdot \dot{\gamma}(u)| \,dw  \right| \leq \|\ddot{\gamma}\|_{L^p} |v-u|^{\frac{1}{q}-1}
%		\end{equation*}
%		and 
%		\begin{equation*}
%			I_2(u,v,\gamma) \leq \frac{1}{|v-u|^2} \left|\int_{u}^{v} |\ddot{\gamma}(w)| \,dw  \right|^2 \leq \|\ddot{\gamma}\|^2_{L^p}|v-u|^{\frac{2}{q}-2}.
%		\end{equation*}
%		We set an increasing function $f(h) = 2 \vert h \vert^{\frac{1}{q}}M+ \vert h \vert^{\frac{2}{q}} M^2$ where $M:= \|\ddot{\gamma}\|_{L^p}$ and define a set $A \subset [0,\,\ell] \times [0,\,\ell]$ as
%		\begin{equation}\label{estimateReminderTerm}
%			A:=\left\{(u,v) \in [0,\,\ell] \times  [0,\,\ell] \mid f(u-v) <  1\right\}.
%		\end{equation}
		Recalling that $\gamma \in C^{1,1}$ is simple, there exists $c>0$ such that for any $u,\,v \in [0,\,\ell]$ it holds
		\begin{eqnarray*}
		1-\frac{1}{c}\leq 2(v-u) \, I_1(u,v,\gamma) + (v-u)^2\,I_2(u,v,\gamma) \quad\text{and}\\
			 |2(v-u) \, I_1(u,v,\gamma) + (v-u)^2\,I_2(u,v,\gamma)| <  c|u-v|.
		\end{eqnarray*}
	 Now, using \eqref{expansionGamma} and the expansion $(1+x)^{-s} = 1 - sx + O(|x|^2)$ for $x\geq-\frac{1}{c}$ where $|O(r)| \leq C|r|$ for any $r \in \R$ and some constant $C>0$ independent of $u$ and $v$, we obtain that
		\begin{align}
			&|\gamma(v) - \gamma(u)|^{-s} \nonumber\\
			&= (v-u)^{-s} \left(1 + 2(v-u) \,I_1(u,v,\gamma)  + (v-u)^2 \, I_2(u,v,\gamma) \right)^{-\frac{s}{2}} \nonumber\\
			&= |v-u|^{-s} \left( 1 - s (v-u) \, I_1(u,v,\gamma) -\frac{s}{2} (v-u)^2 \, I_2(u,v,\gamma) + O(|v-u|^2)  \right) \nonumber\\
			&= |v-u|^{-s} \left( 1 - s (v-u) \, I_1(u,v,\gamma) + O(|v-u|^2)  \right)\label{expansionGammao2}
		\end{align}     
		for any $u,\,v \in [0,\,\ell]$ with $u \neq v$. Notice that since $\gamma$ is a compact curve with bounded curvature $|\gamma(u)-\gamma(v)|$ is uniformly bounded away from zero. Thus, by recalling the definition of the $s$-fractional mass for $\gamma$ and from \eqref{expansionGamma} and \eqref{expansionGammao2}, we obtain that
		\begin{align}
	 		M_s(\gamma)
	 		&= \iint_{\gamma \times \gamma} \frac{\tau_{\gamma}(x) \cdot \tau_{\gamma}(y)}{|x-y|^s} \,d\mathcal{H}^1(x)\,d\mathcal{H}^1(y) \nonumber\\
			&=\lim_{\varepsilon \downarrow 0} \iint_{ \{|u-v| \geq \varepsilon\}} \frac{ \dot{\gamma}(u) \cdot \dot{\gamma}(v) }{|\gamma(u) - \gamma(v)|^s} \,du\,dv \nonumber\\
			&=\lim_{\varepsilon \downarrow 0}  \iint_{ \{|u-v| \geq \varepsilon\}} \frac{ 1 - \frac{1}{2}|\dot{\gamma}(u)-\dot{\gamma}(v)|^2 }{|v-u|^s} \left\{ 1 - s (v-u) \, I_1(u,v,\gamma) + o(|v-u|
			) \right\} \,du\,dv \nonumber\\
			&=   \lim_{\varepsilon \downarrow 0}\iint_{\{ |u-v| \geq \varepsilon\}} \frac{1}{|v-u|^s} \,du^,dv \nonumber\\
			&\qquad + \lim_{\varepsilon \downarrow 0}\iint_{\{ |u-v| \geq \varepsilon\}}  \left\{ - s \, \sign(v-u) |v-u|^{1-s} \, I_1(u,v,\gamma) + o(|v-u|^{1-s}) \right\} \,du\,dv \nonumber\\
			&\qquad \quad - \lim_{\varepsilon \downarrow 0}\iint_{\{ |u-v| \geq \varepsilon\}} \frac{|\dot{\gamma}(u)-\dot{\gamma}(v)|^2}{2|v-u|^s} \left\{ 1 - s (v-u) \, I_1(u,v,\gamma) + o(|u-v|) \right\} \,du\,dv \label{computationFracMassGamma}
		\end{align} 
		Notice that, from \eqref{meanValueGammaDdot} and the definitions of $I_1$ and $I_2$, the second and third terms in the right-hand side of \eqref{computationFracMassGamma} converge, and are uniformly bounded  in $s \in (0,\,1)$. 
%		Moreover, since $\gamma$ is simple, we have that $\sup_{s \in (0,\,1)}\sup_{|u-v|\geq c^{-1}}|\gamma(u) - \gamma(v)|^{-s} < \infty$ and thus, we obtain 
%		\begin{equation*}
%			\lim_{s \uparrow 1}	(1-s)\iint_{\{|u-v| \geq c^{-1}\}} \frac{ \dot{\gamma}(u) \cdot \dot{\gamma}(v) }{|\gamma(u) - \gamma(v)|^s} \,du\,dv =0.
%		\end{equation*}
		Therefore, taking all of the above arguments into account, we obtain
		\begin{align*}
			\lim_{s \uparrow 1}(1-s)M_s(\gamma)			&= \lim_{s \uparrow 1}\lim_{\varepsilon \downarrow 0}\iint_{\{|v-u| \geq \varepsilon\}} \frac{1}{|v-u|^s} \,du\,dv \\
			&= \lim_{s \uparrow 1}(1-s) \left\{	\int_{0}^{\ell}\int_{v}^{\ell}\frac{du}{(u-v)^s} \, dv + 	\int_{0}^{\ell}\int_{0}^{v}\frac{du}{(v-u)^s} \, dv \right\}\\
			&= \lim_{s \uparrow 1}(1-s) \, \frac{2 \ell^{2-s}}{(1-s)(2-s)} \\
			&= \lim_{s \uparrow 1}  2\ell^{2-s} = 2\ell .
		\end{align*}
	
%		+ \iint_{\{|u-v| \geq c^{-1}\}} \frac{ \dot{\gamma}(u) \cdot \dot{\gamma}(v) }{|\gamma(u) - \gamma(v)|^s} \,du\,dv
%			&\qquad + \iint_{\{|u-v| \geq c^{-1}\}} \frac{ \dot{\gamma}(u) \cdot \dot{\gamma}(v) }{|\gamma(u) - \gamma(v)|^s} \,du\,dv \nonumber\\

%		\begin{equation*}
%			\lim_{s \uparrow 1}(1-s)M_s(\gamma) = \lim_{s \uparrow 1} (1-s)\iint_{\left( [0,\,\ell] \right)^2} \frac{1}{|v-u|^s} \,du\,dv = 2 \ell = 2 \mathcal{H}^1(\gamma).
%		\end{equation*} 
	\end{proof}
	Let us remark that Seguin in \cite{Seguin20}  introduced a notion of $s$-fractional length for smooth curves in $\mathbb{R}^d$ and computed the corresponding limit as $s \uparrow 1$,  obtaining  that the $s$-fractional length (according with his definition) of any compact, $C^1$-curve (not necessarily closed) converges to its classical length up to proper constants (see \cite[Theorem 3.2]{Seguin20} for the detail). 
	
	In the next proposition we compute the limit as $ s \uparrow 1$ of the $s$-fractional curvature for smooth curves.
	\begin{proposition}
		Let $\gamma: [-\frac{\ell}{2}, \frac{\ell}{2}] \to \mathbb{R}^d$ be an arc-length parametrization of a simple $C^2$-curve (not necessarily closed). Then, it holds that, for all $u \in (-\frac{\ell}{2}, \frac{\ell}{2})$,
		\begin{equation}\label{limitcurv}
			\lim_{s \uparrow 1} (1-s) 	k_s(u,\gamma)= - \ddot{\gamma}(u).
		\end{equation}
		where $k_s(u,\gamma)$ is the $s$-fractional curvature of $\gamma$ at $u$ as in Definition \ref{defFractionalCurvature}.
	\end{proposition}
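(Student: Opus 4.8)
The plan is to localize the singular integral defining $k_s(u,\gamma)$ near the diagonal $v=u$ and to exploit that the weight $(1-s)\,|v-u|^{-s}\,dv$ concentrates, as $s\uparrow 1$, into a pair of unit point masses at $u$ (one from each side). Since $\gamma$ is arc-length parametrized we have $|\dot\gamma(u)|=1$ and $\dot\gamma(u)\cdot\ddot\gamma(u)=0$; using the integral form of Taylor's theorem exactly as in \eqref{meanValueGammaDdot} I would write, for $w:=v-u$,
\[
\gamma(v)-\gamma(u)=w\,\dot\gamma(u)+w^2 A(u,v),\qquad \dot\gamma(v)=\dot\gamma(u)+w\,B(u,v),
\]
with $A(u,v):=\int_0^1(1-t)\ddot\gamma(u+tw)\,dt\to\tfrac12\ddot\gamma(u)$ and $B(u,v):=\int_0^1\ddot\gamma(u+tw)\,dt\to\ddot\gamma(u)$ as $v\to u$, both continuous and bounded on $[-\tfrac\ell2,\tfrac\ell2]^2$.

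Substituting these expansions into the numerator of \eqref{curvaturefraz} and using $\dot\gamma(u)\cdot\ddot\gamma(u)=0$, the first-order terms in $w$ cancel, and I expect the numerator to factor as $w^2\,C(u,v)$, where $C$ is $s$-independent, continuous, and satisfies $C(u,v)\to-\tfrac12\ddot\gamma(u)$ as $v\to u$. For the denominator I would write $|\gamma(u)-\gamma(v)|^2=w^2\big(1+2w(\dot\gamma(u)\cdot A)+w^2|A|^2\big)$, so that $|\gamma(u)-\gamma(v)|^{2+s}=|w|^{2+s}\,D(u,v)$ with $D(u,v)\to1$ as $v\to u$, uniformly in $s\in(0,1)$ (the exponent $(2+s)/2$ stays in a bounded range while the base tends to $1$); simplicity of $\gamma$ guarantees that $D$ is bounded above and below by positive constants on the whole square. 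Hence the integrand becomes $g_s(u,v)\,|v-u|^{-s}$ with $g_s:=C/D$, and $g_s(u,v)\to-\tfrac12\ddot\gamma(u)$ as $v\to u$, uniformly in $s$.

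It then remains to compute $\lim_{s\uparrow1}(1-s)\,s\int_{-\ell/2}^{\ell/2} g_s(u,v)\,|v-u|^{-s}\,dv$. Fixing $\delta>0$ small enough that $(u-\delta,u+\delta)\subset(-\tfrac\ell2,\tfrac\ell2)$, I would split the integral at $|v-u|=\delta$. On the outer region the integrand is bounded uniformly in $s$, so after multiplication by $(1-s)\to0$ that contribution vanishes. On the inner region I would replace $g_s$ by the constant $-\tfrac12\ddot\gamma(u)$ up to an error controlled by $\varepsilon(\delta):=\sup_{|v-u|<\delta,\,s}\big|g_s(u,v)+\tfrac12\ddot\gamma(u)\big|$, and use the exact value $\int_{u-\delta}^{u+\delta}|v-u|^{-s}\,dv=\tfrac{2\delta^{1-s}}{1-s}$. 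This yields a main term $-s\,\delta^{1-s}\,\ddot\gamma(u)\to-\ddot\gamma(u)$ as $s\uparrow1$ (since $\delta^{1-s}\to1$), together with an error bounded by $2\varepsilon(\delta)$. Letting $\delta\downarrow0$ makes $\varepsilon(\delta)\to0$ and gives \eqref{limitcurv}.

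The main obstacle is the $s$-dependence of the integrand: one cannot simply invoke weak-$\ast$ convergence of the measures against a fixed test function, so the crux is the uniform-in-$s$ estimate $g_s(u,v)\to-\tfrac12\ddot\gamma(u)$ near the diagonal, which in turn rests on the uniform convergence $D(u,v)\to1$ and on the $s$-independence of $C$. Once this uniformity is secured, the split-and-estimate argument above is routine.
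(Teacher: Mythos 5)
Your proposal is correct and follows essentially the same route as the paper's proof: Taylor-expand $\gamma$ and $\dot{\gamma}$ at the base point, observe that the numerator of the fractional curvature equals $w^2\bigl(-\tfrac{1}{2}\ddot{\gamma}(u)+o(1)\bigr)$ while the denominator equals $|w|^{2+s}\bigl(1+o(1)\bigr)$ uniformly in $s$, discard the off-diagonal region using the factor $(1-s)$, and evaluate the model integral $\int_{-\delta}^{\delta}|w|^{-s}\,dw = \tfrac{2\delta^{1-s}}{1-s}$ explicitly. The only differences are presentational: the paper fixes $u=0$, bounds the Taylor remainders by a parameter $\eta$, and squeezes after testing against an arbitrary covector $\lambda$, whereas you keep vector-valued estimates with a uniform-in-$s$ error $\varepsilon(\delta)$; both reduce to the same computation.
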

	\begin{proof}
		We recall that $k_s$ is defined as 
		\begin{equation}
			k_s(u,\gamma)=s\int_{-\frac{\ell}{2}}^{\frac{\ell}{2}}\frac{\dot{\gamma}(v)(\gamma(u)-\gamma(v))\cdot \dot{\gamma}(u)-\dot{\gamma}(u)\cdot \dot{\gamma}(v)(\gamma(u)-\gamma(v))}{\vert \gamma(u)-\gamma(v)\vert^{2+s}} d v.
		\end{equation}
		We prove the formula \eqref{limitcurv} for $u=0$.
		Up to a translation we have $ \gamma(0)=0$. Let $\eta>0$ be fixed, since $ \gamma \in C^2$ then for $\delta$ small enough we have for all $ v \in (-\delta,\delta)$
		\begin{equation}
			\gamma(v)=  \dot{\gamma}(0)v+ \ddot{\gamma}(0)\frac{v^2}{2}+ R_1(\gamma,v)v^2 \quad \text{ and } \quad \dot{\gamma}(v)= \dot{\gamma}(0)+ \ddot{\gamma}(0)v+ R_2(\gamma,v)v
		\end{equation}
		where $ \vert R_1(\gamma,v) \vert \leq \eta$ and $\vert R_2(\gamma,v) \vert \leq \eta$ for all $ v \in (-\delta,\delta)$.
		We have that
		\begin{equation}\label{21122023matt1}
			\begin{split}
			&-\dot{\gamma}(v)\gamma(v)\cdot \dot{\gamma}(0)+\dot{\gamma}(0)\cdot \dot{\gamma}(v)\gamma(v) \\
			&=    - (v +v^2 R_1(\gamma,v) \cdot \dot\gamma(0)) [\dot \gamma(0) + \ddot \gamma(0) v + R_2(\gamma,v) v]\\
				 & +  
				 (1+v R_2(\gamma,v) \cdot \dot \gamma(0)) [\dot \gamma(0) v  + \ddot \gamma(0) \frac{v^2}{2} + R_1(\gamma,v) v^2]  \\
%				&\quad\quad	-(\dot{\gamma}(0)+\ddot{\gamma}(0)v+R_2(\gamma,v)v)\dot{\gamma}(0)\cdot(-\dot{\gamma}(0)v-\ddot{\gamma}(0)\frac{v^2}{2}+ R_1(\gamma,v)v^2)\\
%				&  \quad =  \dot{\gamma}(0)v+ \ddot{\gamma}(0)\frac{v^2}{2}+ \dot{\gamma}(0)\cdot \ddot{\gamma}(0) \dot{\gamma}(0)v^2 +\ddot{\gamma}(0) \dot{\gamma}(0)\cdot \ddot{\gamma}(0)\frac{v^3}{2} \\
%				& \quad \quad+\dot{\gamma}(0)v- \ddot{\gamma}(0)v^2-  \dot{\gamma}(0)\cdot \ddot{\gamma}(0) \dot{\gamma}(0)\frac{v^2}{2}+R_3(\gamma,v)v^2         \\
				&= -\ddot{\gamma}(0)\frac{v^2}{2}+ R_3(\gamma,v)v^2
			\end{split}
		\end{equation}
%
%		We have that
%		\begin{equation}\label{21122023matt1}
%			\begin{split}
%				& -\dot{\gamma}(v)\gamma(v)\cdot \dot{\gamma}(0)+\dot{\gamma}(0)\cdot \dot{\gamma}(v)\gamma(v) \\
%				&\quad=-\dot{\gamma}(0)\cdot(\dot{\gamma}(0)v+R_2(\gamma,v)v)(-\dot{\gamma}(0)v-\ddot{\gamma}(0)\frac{v^2}{2}- R_1(\gamma,v)v^2) \\
%				&\quad\quad	-(\dot{\gamma}(0)+\ddot{\gamma}(0)v+R_2(\gamma,v)v)\dot{\gamma}(0)\cdot(-\dot{\gamma}(0)v-\ddot{\gamma}(0)\frac{v^2}{2}+ R_1(\gamma,v)v^2)\\
%				&  \quad =  \dot{\gamma}(0)v+ \ddot{\gamma}(0)\frac{v^2}{2}+ \dot{\gamma}(0)\cdot \ddot{\gamma}(0) \dot{\gamma}(0)v^2 +\ddot{\gamma}(0) \dot{\gamma}(0)\cdot \ddot{\gamma}(0)\frac{v^3}{2} \\
%				& \quad \quad+\dot{\gamma}(0)v- \ddot{\gamma}(0)v^2-  \dot{\gamma}(0)\cdot \ddot{\gamma}(0) \dot{\gamma}(0)\frac{v^2}{2}+R_3(\gamma,v)v^2         \\
%				& \quad= -\ddot{\gamma}(0)\frac{v^2}{2}+ R_3(\gamma,v)v^2
%			\end{split}
%		\end{equation}
		where $ \vert R_3(\gamma,v) \vert \leq \eta$ up to taking $\delta>0$ sufficiently small. Moreover, again for $\delta>0$ sufficiently small, we have that, for all $v \in (-\delta,\delta)$,
		\begin{equation}\label{21122023matt2}
			\gamma(v)= \gamma(0)+(\dot{\gamma}(0)+R_4(\gamma,v))v=(\dot{\gamma}(0)+R_4(\gamma,v))v
		\end{equation}
		where $ \vert R_4(\gamma,v)\vert \leq  \eta$. 
		%with $ \omega(\eta) \rightarrow 0$ as $ \eta \rightarrow 0^+$.
		We observe that
		\begin{equation}\label{25122023matt1}
			\lim_{s \uparrow 1}(1-s)s \int_{[-\frac{\ell}{2},\,\frac{\ell}{2}] \setminus (-\delta,\delta)}\frac{-\dot{\gamma}(v)\gamma(v)\cdot \dot{\gamma}(0)+\dot{\gamma}(0)\cdot \dot{\gamma}(v)\gamma(v)}{\vert \gamma(v)\vert^{2+s}} d v=0.
		\end{equation}

Now we fix a co-vector $\lambda \in \Lambda^1$ and assume, without loss of generality, that $\lambda(\ddot{\gamma}(0))\leq0$. Note that for $\eta>0$ small enough \eqref{21122023matt2} gives $|v|^{2+s}(1-4\eta)\leq|\gamma(v)|^{2+s}\leq |v|^{2+s}(1+4\eta)$. Hence, using \eqref{21122023matt1}, \eqref{21122023matt2}, and \eqref{25122023matt1}, we have that
	% (if $\lambda(\ddot{\gamma}(0))\geq0$ the argument is the same) we have
		\begin{equation}\label{24122023matt0}
			\begin{split}
				&	\liminf_{s \uparrow 1}(1-s)s\int_{-\delta}^\delta \frac{ v^2 (\lambda(-\frac{\ddot{\gamma}(0)}{2}+R_3(\gamma,v)))}{\vert v \vert^{2+s}(1+4\eta)} dv \\
				&\leq \liminf_{s \uparrow 1}(1-s)s \lambda\bigg(\int_{-\frac{\ell}{2}}^{\frac{\ell}{2}}\frac{-\dot{\gamma}(v)\gamma(v)\cdot \dot{\gamma}(0)+\dot{\gamma}(0)\cdot \dot{\gamma}(v)\gamma(v)}{\vert \gamma(v)\vert^{2+s}} d v\bigg)\\
				& \leq  \limsup_{s \uparrow 1}(1-s)s \lambda\bigg(\int_{-\frac{\ell}{2}}^{\frac{\ell}{2}}\frac{-\dot{\gamma}(v)\gamma(v)\cdot \dot{\gamma}(0)+\dot{\gamma}(0)\cdot \dot{\gamma}(v)\gamma(v)}{\vert \gamma(v)\vert^{2+s}} d v\bigg)\\
				& \leq \limsup_{s \uparrow 1}(1-s)s\int_{-\delta}^\delta \frac{ v^2 (\lambda(-\frac{\ddot{\gamma}(0)}{2}+R_3(\gamma,v)))}{\vert v \vert^{2+s}(1- 4\eta)} dv.
			\end{split}
		\end{equation}
		%We claim that
		%\begin{equation}\label{24122023matt1}
		%	\lim_{\eta \rightarrow 0^+}\liminf_{s \uparrow 1}(1-s)s\int_{-\delta}^\delta \frac{ v^2 (\lambda(-\frac{\ddot{\gamma}(0)}{2}+R_3(\gamma,v)))}{\vert v \vert^{2+s}(1+\omega(\eta))} dv= -\lambda(\ddot{\gamma}(0)).
		%\end{equation}
Moreover, 		we have that 
		\begin{equation}\label{24122023matt1}
			\begin{split}
				\frac{-\lambda(\ddot{\gamma}(0))+f_1(\eta)}{1+4\eta} &=\liminf_{s \uparrow 1}(1-s)s\int_{-\delta}^\delta \frac{ v^2 (\lambda(-\frac{\ddot{\gamma}(0)}{2})- \vert \lambda \vert \eta)}{\vert v \vert^{2+s}(1+4\eta)} dv\\
				&\leq	\liminf_{s \uparrow 1}(1-s)s\int_{-\delta}^\delta \frac{ v^2 (\lambda(-\frac{\ddot{\gamma}(0)}{2}+R_3(\gamma,v)))}{\vert v \vert^{2+s}(1+4\eta)} dv; \\
				&\leq \limsup_{s \uparrow 1}(1-s)s\int_{-\delta}^\delta \frac{ v^2 (\lambda(-\frac{\ddot{\gamma}(0)}{2})+R_3(\gamma,v))}{\vert v \vert^{2+s}(1-4\eta)} dv\\
				&\leq	\limsup_{s \uparrow 1}(1-s)s\int_{-\delta}^\delta \frac{ v^2 (\lambda(-\frac{\ddot{\gamma}(0)}{2}+\vert \lambda \vert \eta))}{\vert v \vert^{2+s}(1-4\eta)} dv \\
				&=\frac{-\lambda(\ddot{\gamma}(0))+f_2(\eta)}{1-4\eta}
				%		\\
				%		&\leq 
				%		\lim_{s \uparrow 1}(1-s)s\int_{-\delta}^\delta \frac{ v^2 (\lambda(-\frac{\ddot{\gamma}(0)}{2})+ \vert \lambda \vert \eta)}{\vert v \vert^{2+s}(1+\eta)} dv=-\lambda(\ddot{\gamma}(0))+f_2(\eta) 
			\end{split}
		\end{equation}
		where $ f_1(\eta), \, f_2(\eta)  \rightarrow 0$ as $ \eta \rightarrow 0^+$.
%		Analogous we have 
%		\begin{equation}\label{24122023matt2}
%			\begin{split}
%				&\limsup_{s \uparrow 1}(1-s)s\int_{-\delta}^\delta \frac{ v^2 (\lambda(-\frac{\ddot{\gamma}(0)}{2})+R_3(\gamma,v))}{\vert v \vert^{2+s}(1-\omega(\eta))} dv\\
%				&\leq	\limsup_{s \uparrow 1}(1-s)s\int_{-\delta}^\delta \frac{ v^2 (\lambda(-\frac{\ddot{\gamma}(0)}{2}+\vert \lambda \vert \eta))}{\vert v \vert^{2+s}(1-\omega(\eta))} dv =\frac{-\lambda(\ddot{\gamma}(0))+f_2(\eta)}{1-\omega(\eta)}
%				%		\\
%				%		&\leq 
%				%		\lim_{s \uparrow 1}(1-s)s\int_{-\delta}^\delta \frac{ v^2 (\lambda(-\frac{\ddot{\gamma}(0)}{2})+ \vert \lambda \vert \eta)}{\vert v \vert^{2+s}(1+\omega(\eta))} dv=-\lambda(\ddot{\gamma}(0))+f_2(\eta) 
%			\end{split}
%		\end{equation}
%		where $ f_2(\eta) \rightarrow 0$ as $ \eta \rightarrow 0^+$.
		Therefore by  \eqref{25122023matt1}, \eqref{24122023matt0}, \eqref{24122023matt1} and sending $ \eta $ to $0$ we have that
		\begin{equation}
			\begin{split}
				-\lambda(\ddot{\gamma}(0)) &\le	\liminf_{s \uparrow 1}(1-s)s \lambda\bigg(\int_{-\frac{\ell}{2}}^{\frac{\ell}{2}}\frac{-\dot{\gamma}(v)\gamma(v)\cdot \dot{\gamma}(0)+\dot{\gamma}(0)\cdot \dot{\gamma}(v)\gamma(v)}{\vert \gamma(v)\vert^{2+s}} d v\bigg)\\
				&\leq  \limsup_{s \uparrow 1}(1-s)s \lambda\bigg(\int_{-\frac{\ell}{2}}^{\frac{\ell}{2}}\frac{-\dot{\gamma}(v)\gamma(v)\cdot \dot{\gamma}(0)+\dot{\gamma}(0)\cdot \dot{\gamma}(v)\gamma(v)}{\vert \gamma(v)\vert^{2+s}} d v\bigg) = -\lambda(\ddot{\gamma}(0)).
			\end{split}
		\end{equation}
		Hence \eqref{limitcurv} follows by the arbitrariness
		of $ \lambda \in\Lambda^1$.
	\end{proof}

	\section{Compactness}
	In this section we provide a compactness result for 1-currents with finite $s$-fractional mass.
	% i.e., we prove that any sequence of 1-currents with finite $s$-fractional mass has a convergent subsequence in $\mathcal{C}_s$, which is defined in Definition \ref{setC_s}, under some suitable assumptions.
	To this end we first show that any $\gamma \in \mathcal{D}_1$ supported in $U$ can be continuously extended to a linear functional defined on $C_0^{k,\alpha}(U,\LamCd)$.
	\begin{proposition}\label{propositionEtensionCkalpha}
		Let $U\subset \R^d$ be a bounded open set, $k \in \mathbb{N}$ such that $k> d-\frac{s}{2}$ and $\alpha \in (0,\,1]$. Then, for any $\gamma \in \Cs(U)$, there exists a unique extension $\overline{\gamma}: C_0^{k,\alpha}(U,\LamCd) \to \C$ of $\gamma$ such that $\|\overline{\gamma}\|_{C^{k,\alpha}_0(U, \LamCd)^*}  \leq C\|\gamma\|_{\Cs}$ for some $C>0$, where
		\begin{equation*}
			\|\overline{\gamma}\|_{(C_0^{k,\alpha}(U,\Lambda^1))^*} \coloneqq \sup \left\{ \overline{\gamma}(\omega) \mid \, \omega \in C_0^{k,\alpha}(U, \LamCd), \, \|\omega\|_{C^{k,\alpha}}\le 1 \right\}.
		\end{equation*} 
	\end{proposition}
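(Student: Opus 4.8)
The plan is to establish, on the dense subspace $C_c^\infty(U,\LamCd)$, the estimate $|\gamma(\omega)|\le C\,\|\gamma\|_{\Cs}\,\|\omega\|_{C^{k,\alpha}}$ with $C=C(d,s,k,U)$, and then to extend $\gamma$ by continuity. The first move is to pass to Fourier space. I set $k(x):=|x|^{-(d-s)/2}$ as in the proof of Proposition \ref{lose}, so that $k(x)^{-2}=|x|^{d-s}$. For real $\omega\in\mathcal{D}^1$ one has $\overline{\omega}^*=\omega$, so the Plancherel-type identity \eqref{fourierTransformCurrents} together with the integral representation \eqref{defAdmissibleClassFractionalMass1Current} of $\mathscr{F}[\gamma]$ gives
\begin{equation*}
\gamma(\omega)=\frac{1}{(2\pi)^d}\,\mathscr{F}[\gamma]\!\left(\overline{\mathscr{F}[\omega]}^*\right)=\frac{1}{(2\pi)^d}\int_{\R^d}\big\langle k(x)^{-1}\,\overline{\mathscr{F}[\omega]}^*(x),\,k(x)\,\widehat{\gamma}(x)\big\rangle\,dx .
\end{equation*}
Extending $\gamma$ $\C$-linearly to complex forms $\omega\in C_c^\infty(U,\LamCd)$ keeps this identity valid, and the Cauchy--Schwarz inequality bounds the right-hand side by the product of $\big(\int_{\R^d}|x|^{d-s}\,|\mathscr{F}[\omega](x)|^2\,dx\big)^{1/2}$ and a fixed multiple of $\|\widehat{\gamma}\|_{\LK}=\|\gamma\|_{\Cs}$.

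The core of the argument is to control the first factor by $\|\omega\|_{C^{k,\alpha}}$. For this I would use the classical decay estimate for the Fourier transform of a compactly supported smooth form: integrating by parts, $|\xi^\beta\,\mathscr{F}[\omega](\xi)|=|\mathscr{F}[D^\beta\omega](\xi)|\le\|D^\beta\omega\|_{L^1}\le|U|\,\|\omega\|_{C^{k,\alpha}}$ for every multi-index $|\beta|=k$; since $\sum_{|\beta|=k}|\xi^\beta|\ge c_k\,|\xi|^k$, and using the trivial bound $|\mathscr{F}[\omega]|\le\|\omega\|_{L^1}$ for small frequencies, this yields
\begin{equation*}
|\mathscr{F}[\omega](\xi)|\le C\,|U|\,\|\omega\|_{C^{k,\alpha}}\,(1+|\xi|)^{-k}
\end{equation*}
uniformly in $\xi$ (the Hölder exponent $\alpha$ furnishes an extra decay factor $(1+|\xi|)^{-\alpha}$, giving further room but not being strictly necessary). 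Substituting,
\begin{equation*}
\int_{\R^d}|\xi|^{d-s}\,|\mathscr{F}[\omega](\xi)|^2\,d\xi\le C\,\|\omega\|_{C^{k,\alpha}}^2\int_{\R^d}\frac{|\xi|^{d-s}}{(1+|\xi|)^{2k}}\,d\xi .
\end{equation*}

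The hypothesis $k>d-\tfrac{s}{2}$ enters precisely in the convergence of this weighted integral: in polar coordinates the radial integrand is $r^{2d-1-s}(1+r)^{-2k}$, which is integrable at the origin because $2d-s>0$ and integrable at infinity because $2d-1-s-2k<-1$ is equivalent to $k>d-\tfrac{s}{2}$. Hence the integral equals a finite constant $C=C(d,s,k,U)$, and combining with the Cauchy--Schwarz step gives $|\gamma(\omega)|\le C\,\|\gamma\|_{\Cs}\,\|\omega\|_{C^{k,\alpha}}$ on $C_c^\infty(U,\LamCd)$. Since $C_c^\infty(U,\LamCd)$ is dense in $C_0^{k,\alpha}(U,\LamCd)$ (as already used in the previous lemma), $\gamma$ admits a unique continuous extension $\overline{\gamma}$ to $C_0^{k,\alpha}(U,\LamCd)$ satisfying the same bound, whence $\|\overline{\gamma}\|_{(C_0^{k,\alpha}(U,\Lambda^1))^*}\le C\,\|\gamma\|_{\Cs}$; uniqueness is immediate from density. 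The only genuinely delicate point is the uniform Fourier-decay estimate together with the verification that the weighted integral converges, which is exactly where the condition on $k$ is needed.
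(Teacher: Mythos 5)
Your proof is correct and follows essentially the same route as the paper's: the Plancherel-type identity, the decay $|\mathscr{F}[\omega](\xi)|\lesssim |U|\,\|\omega\|_{C^{k}}\,|\xi|^{-k}$ from integration by parts, a Cauchy--Schwarz estimate against the weight $|\xi|^{(d-s)/2}$ producing the factor $\|\gamma\|_{\Cs}$, convergence of the weighted integral exactly under $k>d-\tfrac{s}{2}$, and finally extension by density of $C_c^\infty(U,\LamCd)$ in $C_0^{k,\alpha}(U,\LamCd)$. The only cosmetic difference is that the paper splits Fourier space into $B_R(0)$ and its complement and estimates each piece separately, whereas you fold both regimes into the single uniform bound $(1+|\xi|)^{-k}$ and one global Cauchy--Schwarz.
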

	\begin{proof}
		Let $ \omega \in C_c^{\infty}(U, \LamCd)$. Since, for all $|\beta|\le k$,
		$$
		\|D^\beta \omega\|_{L^1}\le |U| \|D^\beta \omega\|_{L^\infty}\le \| \omega \|_{C^k(U,\LamCd)} \vert U \vert < +\infty,
		$$ 
		then we have
		\begin{equation*}
			| \mathscr{F}[\omega] (\xi)| \le \| \omega \|_{C^k(U,\LamCd)} \vert U \vert |\xi|^{-k}
		\end{equation*}
		for any $\xi \in \R^d$. Then, recalling \eqref{17122023pom1}, we have
		\begin{align}
			\vert \gamma(\omega)\vert   &= (2\pi)^{-d}\vert   \mathscr{F}[\gamma](\overline{\mathscr{F}[\omega]}^*) \vert  \nonumber\\
			&\leq \int_{B_R(0)}\vert  \langle \overline{\mathscr{F}[\omega]}^*(\xi), \widehat\gamma (\xi) \rangle \vert \, d \xi+\int_{\R^d \setminus B_R(0)}\vert  \langle \overline{\mathscr{F}[\omega]}^*(\xi), \widehat\gamma(\xi) \rangle \vert \, d\xi 
			\\
			& \le C\,\vert U \vert\,
			\| \omega \|_{C^{k,\alpha}(U,\LamCd)} \,  \left\|\widehat\gamma |\xi|^{\frac{s-d}{2}} \right\|_{L^2} \left( \vert B_R(0) \vert  +   \left\| |\xi|^{\frac{d-s}{2} - k} \right\|_{L^2(\R^d \setminus B_R(0))} \right) \nonumber\\
			&\le
			C\| \omega \|_{C^{k,\alpha}(U,\LamCd)}  \|\gamma\|_{\Cs} < +\infty \label{uniformBounds1currentsCkalpha}
		\end{align}
		where $C$ is a positive constant independent of $\omega$. Thus, by the density of $C^{\infty}_c$ in $C^{k,\alpha}_0$, we obtain that $\gamma$ can be continuously extended into a functional on $C^{k,\alpha}_0(U, \LamCd)$ with a finite dual norm.
	\end{proof}
%	We will denote by $\Cs(U)$ the currents in $\Cs$ with support contained in $U$: 
	%consider the space of forms $ \omega \in C_c^{\infty}(U; \Lambda^1)$ and denote by 
	%$\Curr(U)$ the corresponding dual space. Moreover,
%	Given $\gamma\in\Curr$, $k\in\mathbb N$ and $\alpha\in(0,1]$, we denote by $\|\gamma\|_{(C_0^{k,\alpha}(U,\Lambda^1))^*}$ the dual norm of $\overline{\gamma}$ with respect to $C_0^{k,\alpha}(U,\Lambda^1)$, i.e., 
%	$$
%	\|\overline{\gamma}\|_{(C_0^{k,\alpha}(U,\Lambda^1))^*} \coloneqq \sup \{ \overline{\gamma}(\omega) \mid \, \omega \in C_0^{k,\alpha}(U, \Lambda^1), \, \|\omega\|_{C^{k,\alpha}}\le 1 \} \, 
%	$$
%	where $\overline{\gamma}$ is the extension of $\gamma$ which can be given in the same way as in Lemma \ref{lemmaexconconv}. 
	For all $ \lambda>0 $, let $\Phi_\lambda \in C^{\infty}(\R^d)$ be the probability density function of a Gaussian distribution with variance equal to $\lambda$, i.e.,
	\begin{equation}\label{gausslambda}
		\Phi_\lambda(x):= \frac{ 1}{( 2 \pi \lambda^2)^{\frac{d}{2}}} \exp(-\frac{\vert x \vert^2}{2 \lambda^2}).
	\end{equation} 
%We recall that the Fourier transform of $ \Phi_\lambda$ is given by $ \mathscr{F}[\Phi_\lambda]=\left(\frac{\pi}{\lambda}\right)^{\frac{d}{2}} \Phi_{\frac{1}{\lambda}}$.
%	\begin{theorem}
%		Let $\{\gamma_n\}_{n \in \N} \in\Curr$     be such that 
%	$	\|\gamma_n\|_{(C_0^{k}(\R^d; \Lambda^1))^*} \le C$ for some  $k\in\mathbb N$ and  $\mathrm{supp}  \, \gamma_n \subset U$ for some  open bounded set $U\subset \R^d$, for all $ n \in \N$. 
%	
%				%$\|\gamma_n\|_{C_s}\le C$, $ \| \gamma_n \|_{\Lip^*} \leq C$ for some $C$ and for all $n \in \N$ and $ \supp(\gamma_n) \subset V$, with $ V \subset \R^d$ open bounded, 
%%		for all $ n \in \N$. 
%
%		Then, up to a subsequence, $\gamma_n \to \gamma $ in $(C_0^{k,\alpha}(\R^d; \Lambda^1))^*$ for some $\gamma \in (C_0^{k,\alpha}(\R^d; \Lambda^1))^*$ and for all $\alpha\in(0,1]$\,.	
%		%$\Cs$ and for all 
%		%$k\in\mathbb N$ and $\beta\in(0,1]$  with either $k>j$ or $k=j$ and $\beta>\alpha$.  
%
%Moreover, if		$\|\gamma_n\|_{C_s}\le C$  for some $C$ 
%and  $\mathrm{supp}  \, \gamma_n \subset U$ for some  open bounded set $U\subset \R^d$, for all $ n \in \N$, 
%%and for all $n \in \N$  and $ \supp(\gamma_n) \subset V$, with $ V \subset \R^d$ open bounded, 
%then  for every $k> d-\frac{s}{2}$ we have  $\|\gamma_n\|_{(C_0^{k}(\R^d; \Lambda^1))^*} \le C$; in particular,  
%up to a subsequence, $\gamma_n \to \gamma $ in $(C_0^{k,\alpha}(\R^d; \Lambda^1))^*$ for some $\gamma \in (C_0^{k,\alpha}(\R^d; \Lambda^1))^*$ and for all $\alpha\in(0,1]$\, .
%	\end{theorem}

%With the above notations, we now prove the compactness as follows:

\begin{theorem}[Compactness]\label{compthm}
	Let  $U\subset \R^d$ be a bounded open set and let $\{\gamma_n\}_{n \in \N} \subset \Cs(U)$ be such that $\|\gamma_n\|_{\Cs}\le C$ for all $ n \in \N$.
	Then, for every $k\in\N$ such that $k> d-\frac{s}{2}$ and $\alpha\in(0,1]$, we have  $\|\overline{\gamma}_n\|_{C_0^{k, \alpha}(U, \LamCd)^*} \le C$. As a consequence, there exists a subsequence $\{\overline{\gamma}_{n_i}\}_{i\in\mathbb{N}}$ such that $\overline{\gamma}_{n_i} \to \gamma $ in $C_0^{k,\alpha}(U, \LamCd)^*$ for some $\gamma \in C_0^{k,\alpha}(U, \LamCd)^*$ with $\gamma\lfloor_{\mathcal{D}^1} \in \Cs(U)$. 
%	for all $k> d-\frac{s}{2}$ and $\alpha\in(0,1]$.
\end{theorem}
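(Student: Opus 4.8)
The plan is to split the argument into three parts: the uniform dual bound, the extraction of a norm-convergent subsequence via a compact embedding, and the identification of the limit as an element of $\Cs(U)$. The uniform bound $\|\overline\gamma_n\|_{C_0^{k,\alpha}(U,\LamCd)^*}\le C$ is an immediate consequence of Proposition \ref{propositionEtensionCkalpha}: since $\gamma_n\in\Cs(U)$ with $\|\gamma_n\|_{\Cs}\le C$ for all $n$, the extension estimate $\|\overline\gamma\|_{C_0^{k,\alpha}(U,\LamCd)^*}\le C\|\gamma\|_{\Cs}$ applies uniformly in $n$.

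For the compactness I would first sharpen the observation underlying Proposition \ref{propositionEtensionCkalpha}: the estimate proved there uses only the $C^k$-norm of the test form, since the decay $|\mathscr{F}[\omega](\xi)|\le \|\omega\|_{C^k(U,\LamCd)}\,|U|\,|\xi|^{-k}$, together with the splitting $\int_{B_R}+\int_{\R^d\setminus B_R}$ (the outer integral being finite precisely because $k>d-\frac{s}{2}$), yields $|\overline\gamma_n(\omega)|\le C\|\gamma_n\|_{\Cs}\|\omega\|_{C^k(U,\LamCd)}$. Hence each $\overline\gamma_n$ extends to a functional on $C_0^k(U,\LamCd)$ with $(C_0^k)^*$-norm bounded uniformly in $n$. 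The structural fact I would then exploit is that, because $U$ is bounded, the inclusion $j\colon C_0^{k,\alpha}(U,\LamCd)\hookrightarrow C_0^{k}(U,\LamCd)$ is compact: a $C^{k,\alpha}$-bounded sequence has equibounded and equicontinuous derivatives up to order $k$ (the top-order ones being uniformly $\alpha$-Hölder), so Arzelà–Ascoli gives precompactness in $C^k$. By Schauder's theorem the adjoint $j^*\colon (C_0^k)^*\to (C_0^{k,\alpha})^*$ is compact, and since $j^*$ acts by restriction one has $j^*\overline\gamma_n=\overline\gamma_n$ as elements of $(C_0^{k,\alpha})^*$. Thus the uniform $(C_0^k)^*$-bound forces $\{\overline\gamma_n\}$ to be precompact in the norm of $C_0^{k,\alpha}(U,\LamCd)^*$, and we extract $\overline\gamma_{n_i}\to\gamma$ in that norm.

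It then remains to show $\gamma\lfloor_{\mathcal D^1}\in\Cs(U)$. Norm convergence in the dual gives in particular $\gamma_{n_i}(\omega)\to\gamma(\omega)$ for every $\omega\in\mathcal D^1$, i.e. weak-$*$ convergence in $\mathcal D_1$, and since each $\gamma_n$ is supported in the bounded set $U$ so is the limit. To produce the required representative I would repeat the construction in the proof of Proposition \ref{lose}: with $k(x)=|x|^{-\frac{d-s}{2}}$ the functions $k\,\widehat\gamma_{n_i}$ are bounded in $L^2(\R^d,\C^d)$, hence converge weakly along a further subsequence to some $\sigma\in L^2(\R^d,\C^d)$; setting $\widehat\gamma:=k^{-1}\sigma\in\LK$ and passing to the limit in the Plancherel-type identity \eqref{17122023pom1} (via the Fourier-inversion substitution $\omega=\overline{\mathscr{F}[\eta]}^*$) identifies $\widehat\gamma$ as the representative of $\mathscr{F}[\gamma\lfloor_{\mathcal D^1}]$, which is exactly the membership $\gamma\lfloor_{\mathcal D^1}\in\Cs(U)$.

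The main obstacle is the compactness step. The bound furnished directly by Proposition \ref{propositionEtensionCkalpha} only controls $\overline\gamma_n$ in the $C^{k,\alpha}$-dual norm, where bounded sets are not sequentially precompact. The decisive point is to recognize that the very same estimate lives already in the coarser dual $(C_0^k)^*$, so that the compact Arzelà–Ascoli embedding $C^{k,\alpha}\hookrightarrow C^k$—available precisely because $U$ is bounded—can be dualized to upgrade mere boundedness into genuine norm precompactness in $C_0^{k,\alpha}(U,\LamCd)^*$.
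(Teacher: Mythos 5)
Your proof is correct, and its first and third steps (the uniform bound via Proposition \ref{propositionEtensionCkalpha}, and the identification of the limit's Fourier representative by re-running the argument of Proposition \ref{lose}) coincide with the paper's. The compactness step, however, takes a genuinely different route. The paper proves sequential compactness by hand: it mollifies the functionals with Gaussians $\Phi_\lambda$, shows $\sup_n|(\overline{\gamma}_n-\overline{\gamma}_n*\Phi_\lambda)(\omega)|<\varepsilon$ uniformly over the unit ball, establishes equicontinuity of $\{\overline{\gamma}_n\}$ on the compact set $\mathfrak{C}_\lambda=\{\omega*\Phi_\lambda : \|\omega\|_{C^{k,\alpha}}\le 1\}\subset C^{k,\alpha}(\overline{U},\LamCd)$, applies Arzel\`a--Ascoli there, and concludes with a $3\varepsilon$ Cauchy argument and completeness of the dual. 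You instead observe that the estimate of Proposition \ref{propositionEtensionCkalpha} only consumes $\|\omega\|_{C^k}$, so the functionals are uniformly bounded already in $(C_0^k)^*$, and you then dualize the compact embedding $C_0^{k,\alpha}(U,\LamCd)\hookrightarrow C_0^k(U,\LamCd)$ via Schauder's theorem. This is shorter, and your sharpened observation is in fact exactly what is needed to repair a soft spot in the paper's own argument: in \eqref{la} the paper bounds $\|\omega-\Phi_\lambda*\omega\|_{C^{k,\alpha}(U,\LamCd)}$ by $C\,\Phi_\lambda*|\cdot|^{\alpha}(0)$, but mollification makes only the $C^k$-part of that norm small --- the H\"older seminorm of $\omega-\Phi_\lambda*\omega$ is in general bounded away from zero (think of a top-order derivative behaving like $|x|^{\alpha}$). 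The mollification error is genuinely small only when measured against functionals bounded in the $(C_0^k)^*$ sense, which is precisely your remark. What the paper's route buys is self-containedness (nothing beyond Arzel\`a--Ascoli); what yours buys is brevity and a clean isolation of where $\alpha>0$ and the boundedness of $U$ enter, namely in the single standard fact that $C^{k,\alpha}\hookrightarrow C^k$ is compact. Two caveats are shared with, rather than introduced relative to, the paper: the identification of $j^*\overline{\gamma}_n$ with the extension of Proposition \ref{propositionEtensionCkalpha} rests on the paper's density/uniqueness claim for $C_c^{\infty}(U,\LamCd)$ in $C_0^{k,\alpha}(U,\LamCd)$, and the final assertion that the limit current has compact support inside $U$ is treated at the same (brief) level of detail as in the paper's last paragraph.
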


\begin{proof}

From Proposition \ref{propositionEtensionCkalpha} and the assumption that $\sup_{n}\|\gamma_n\|_{C_s} \le C$, we obtain that the first part of Theorem \ref{compthm} holds true. 

To prove the second part of the theorem we  show that there exists a subsequence $\{\overline{\gamma}_{n_i}\}_{i \in \mathbb{N}}$ of $\{\overline{\gamma}_n\}_{n \in \mathbb{N}}$ which is a Cauchy sequence in $C_0^{k,\alpha}(U, \LamCd)^*$; if this holds true, then, by the completeness of $C_0^{k,\alpha}(U, \LamCd)^*$, $\{\overline{\gamma}_{n_i}\}_{i \in \mathbb{N}}$ converges to some element in $C_0^{k,\alpha}(U, \LamCd)^*$ for all $k > d-\frac{s}{2}$.
Let $ \omega \in C_0^{k,\alpha}(U, \LamCd)$ with $ \| \omega \|_{C^{k,\alpha}} \leq 1$ and let $\e>0$. Recalling the definition of  $ \Phi_\lambda$  in \eqref{gausslambda}, we have that  there exists $\lambda = \lambda_{\varepsilon} \in (0,\,1)$ such that $\Phi_{\lambda} *  \vert \cdot \vert^{\alpha}(0) < \varepsilon$. Then we have
that
\begin{align}\label{la}
	\sup_{n \in \mathbb{N}} |(\overline{\gamma}_n - \overline{\gamma}_n * \Phi_{\lambda})(\omega)| &= \sup_{n \in \mathbb{N}} |\overline{\gamma}_n (\omega - \Phi_{\lambda} * \omega)| \nonumber\\
	&\leq \sup_{n \in \mathbb{N}} \|\overline{\gamma}_n\|_{C^{k,\alpha}_0(U,\LamCd)^*} \|\omega - \Phi_{\lambda} * \omega \|_{C^{k,\alpha}(U,\LamCd)} \nonumber\\
	&\leq C \|\omega - \Phi_{\lambda} * \omega \|_{C^{k,\alpha}(U,\LamCd)} \nonumber \\
	&\leq C \| \omega \|_{C^{k,\alpha}} \int_{\R^d} \Phi_{\lambda}(x) \vert x \vert^\alpha \,  dx \nonumber\\
	&\leq  C \, \Phi_{\lambda} *  \vert \cdot \vert^{\alpha}(0) <  \e.  
\end{align}

%which vanishes 		as $\lambda \rightarrow 0$.  
%Using standard diagonal arguments, we are left to prove that, for  $\lambda>0$ small enough, $\gamma_n * \Phi_\lambda$ is pre-compact in 
%$(C_c^{k,\alpha}(U; \LamCd))^*$. 
		
We now show that the sequence $\{\overline{\gamma}_{n}\}_{n \in \N}$ is equicontinous in $\mathfrak{C}_\lambda$ where we define $\mathfrak{C}_\lambda$ by
\begin{equation}
	\mathfrak{C}_\lambda:=\left\{ \omega * \Phi_\lambda \mid \| \omega \|_{C^{k,\alpha}(U)} \leq 1 \right\}.
\end{equation}
Note that $\mathfrak{C}_\lambda$ is a compact subset of $ C^{k,\alpha}(\overline{U},\LamCd)$ thanks to the Ascoli-Arzel\`a theorem. Given any two functions $\omega,\widetilde{\omega} \in C^{k,\alpha}_0(U, \LamCd)$ with $ \| \omega \|_{C^{k,\alpha}} \leq 1$ and $\| \widetilde\omega \|_{C^{k,\alpha}} \leq 1$, we have that
\begin{align*}
	\vert \overline{\gamma}_{n}(\omega* \Phi_\lambda)- \overline{\gamma}_{n}(\widetilde{\omega}* \Phi_\lambda) \vert &\leq \| \overline{\gamma}_{n} \|_{C_0^{k}(U, \LamCd)^*} \| \omega* \Phi_\lambda-\widetilde{\omega}* \Phi_\lambda \|_{C^{k,\alpha}(U, \LamCd)}  \nonumber\\
	&\leq C  \| \omega* \Phi_\lambda-\widetilde{\omega}* \Phi_\lambda \|_{C^{k,\alpha}(U, \LamCd)},
\end{align*}
which implies the equicontinuity of $\{\overline{\gamma}_{n}\}_{n \in \N}$ in the topological space $ C\left((\mathfrak{C}_\lambda, \|\cdot \|_{C^{k,\alpha}}),\R \right)$.

Therefore, from the Ascoli-Arzel\`a theorem, we obtain that there exists a  subsequence $ \{\overline{\gamma}_{n_i}\}_{i \in \N}$, which is  a Cauchy sequence in $ C((\mathfrak{C}_\lambda, \|\cdot \|_{C^{k,\alpha}}),\R)$. In other words, we can choose $I \in \mathbb{N}$ such that, for any $i, \, j \in \mathbb{N}$ with $i,\, j\geq I$,
\begin{equation*}
	\|\overline{\gamma}_{n_i} - \overline{\gamma}_{n_j}\|_{C\left((\mathfrak{C}_{\lambda}, \|\cdot \|_{C^{k,\alpha}}),\R \right)} < \varepsilon
\end{equation*} 
where we set
\begin{equation}\label{specialNorm}
	\|f\|_{C\left((\mathfrak{C}_{\lambda}, \|\cdot \|_{C^{k,\alpha}}),\R \right)} \coloneqq \sup\left\{|f(u*\Phi_{\lambda})| \mid u \in C^{k,\alpha}_0(U,\LamCd), \, u*\Phi_{\lambda} \in \mathfrak{C}_{\lambda} \right\}
\end{equation}
for any $f \in C^{k,\alpha}_0(U,\LamCd)^*$. 
%Notice that $\|u * \Phi_{\lambda}\|_{C^{k,\alpha}} \leq \|u\|_{C^{k,\alpha}}$ and thus \eqref{specialNorm} is well-defined.  
%Hence by a Cantor's diagonal argument, up to extracting a further subsequence, $ \{\gamma_{n}\}_{n \in \N} $ is a Cauchy sequence in 
%$ C((\mathfrak{C}_{q}, \|\cdot \|_{C_0^{k,\alpha}}),\R)$ for all $q \in \mathbb{Q} \cap (0,+\infty)$. 
%Therefore, 
%there exists $\lambda(\varepsilon) \in \mathbb{Q} \cap (0,+\infty)$ such that,  for all $ k, \, l$ large enough  we have

Finally, by recalling \eqref{la}, we have that, for any $i, \, j \in \mathbb{N}$ with $i ,\, j \geq I$,  
\begin{align}
	\sup_{\| \omega \|_{C^{k,\alpha}} \leq 1 } \vert (\overline{\gamma}_{n_i} - \overline{\gamma}_{n_j})(\omega)\vert &\leq \sup_{\| \omega \|_{C^{k,\alpha}} \leq 1 } \bigg(\vert (\overline{\gamma}_{n_i} - \overline{\gamma}_{n_i} * \Phi_{\lambda} )(\omega) \vert + \vert (\overline{\gamma}_{n_j} - \overline{\gamma}_{n_j} * \Phi_{\lambda} )(\omega) \vert\bigg) \\
	&\qquad + \sup_{\omega*\Phi_{\lambda} \in \mathfrak{C}_{\lambda}} \vert \overline{\gamma}_{n_i}(\omega* \Phi_\lambda)- \overline{\gamma}_{n_j}(\widetilde{\omega}* \Phi_\lambda) \vert \nonumber\\
	&< 2\varepsilon + \|\overline{\gamma}_{n_i} - \overline{\gamma}_{n_j}\|_{C\left((\mathfrak{C}_{\lambda(\varepsilon)}, \|\cdot \|_{C^{k,\alpha}}),\R\right)} \nonumber\\
	&< 3 \varepsilon.
\end{align}

Therefore,  $\overline{\gamma}_{n_i} \to \gamma $ in $C_0^{k,\alpha}(U, \LamCd)^*$ for some $\gamma \in C_0^{k,\alpha}(U, \LamCd)^*$.
%This completes the proof of the first claim for proving the second part of Theorem \ref{compthm}. 

%Denoting by $\gamma$ the limit of $\{\overline{\gamma_{n_i}}\}_{i \in \mathbb{N}}$ in the above, we 

We finally prove that the restriction $\gamma\lfloor_{\mathcal{D}^1}$ to $\mathcal{D}^1(U) \subset C^{k,\alpha}_0(U,\LamCd)$ belongs to $\Cs(U)$. Indeed, in the same way as in Proposition \ref{lose}, we can choose a unique function $\widetilde{\sigma} \in \LKU$ such that
\begin{equation*}
	\mathscr{F}[\gamma\lfloor_{\mathcal{D}^1}](\eta) = \int_{\R^d} \langle \eta, \widetilde\sigma \rangle \, dx
\end{equation*}
for any $\eta \in \mathcal{S}(U,\LamCd)$. Since $\supp \gamma$ is compact in $U$, so is $\supp \gamma\lfloor_{\mathcal{D}^1}$ and thus $\gamma\lfloor_{\mathcal{D}^1} \in \Cs(U)$. 

	\end{proof}

%\begin{corollary}
%	The space $ (\mathcal{C}_s, \langle \cdot,\cdot\rangle_{\mathcal{C}_s})$ is a Hilbert space, where $\langle \gamma, \sigma \rangle_{\mathcal{C}_s}:= \langle \hat{\gamma},\hat{\sigma} \rangle_{\LK}$.
%\end{corollary}
%\begin{proof}
%	That $ \langle \cdot,\cdot\rangle_{\mathcal{C}_s}$ is a scalar product follow by the definition. By \ref{compthm} we have that every Cauchy sequence  
%\end{proof}

\section{Characterization of the boundary of the sets of finite $s$-fractional perimeter in $\R^2$}

Let $E \subset \R^{2}$ be a measurable set;   
we can define a 2-current $[E] \in \mathcal{D}_2$ 
associated to  $E$ by $[E]:= e_1 \wedge e_2\mathcal{L}^{2}\lfloor_{E}$. 
In the next theorem we prove that the $s$-fractional mass of the boundary of $[E]$, in the sense of currents, coincides, up to a constant prefactor, 
to the $s$-fractional perimeter of $E$. 

\begin{theorem}
	Let $ E \subset \R^{2}$ be a bounded set such that $ P_s(E)<+\infty$. Then $ \partial [E] \in \mathcal{C}_s$  and $\frac{1}{s^2}M_s(\partial [E])=P_s(E)$.
\end{theorem}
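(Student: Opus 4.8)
The plan is to move everything to the Fourier side, where both $M_s(\partial[E])$ and $P_s(E)$ become weighted $L^2$-integrals of $\mathscr{F}[\chi_E]$, and then to fix the multiplicative constant by testing the resulting universal identity on one smooth set. Since $E$ is bounded, $\chi_E\in L^1(\R^2)\cap L^2(\R^2)$, so $\mathscr{F}[\chi_E]$ is a genuine bounded $L^2$-function. First I would make $\partial[E]$ explicit: testing $\partial[E](\omega)=[E](d\omega)$ against $\omega=\omega_1\lambda_1+\omega_2\lambda_2\in\mathcal{D}^1$ gives $\partial[E](\omega)=\int_E(\partial_1\omega_2-\partial_2\omega_1)\,dx$, which is continuous on $\mathcal{D}^1$ and supported in $\overline E$. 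Hence $\partial[E]$ is a compactly supported $1$-current, it admits the extension $\overline{\partial[E]}$ of Lemma \ref{lemmaexconconv}, and, read distributionally, it is represented by the $\C^2$-valued tempered distribution $\mu:=(\partial_2\chi_E,-\partial_1\chi_E)$.

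Next I would compute $\mathscr{F}[\partial[E]]$ exactly as in the proof of Proposition \ref{propmat}. Using $\mathscr{F}[\partial[E]](\omega)=\overline{\partial[E]}(\mathscr{F}[\omega])$, the identity $\partial_j\mathscr{F}[\omega_k]=\mathscr{F}[-i\xi_j\omega_k]$, and the multiplication formula $\int_{\R^2}\chi_E\,\mathscr{F}[g]=\int_{\R^2}\mathscr{F}[\chi_E]\,g$, one obtains for every $\omega\in\mathcal{S}(\R^2,\LamCd)$
\[
\mathscr{F}[\partial[E]](\omega)=\int_{\R^2}\big\langle\omega(\xi),\,\widehat{\partial[E]}(\xi)\big\rangle\,d\xi,\qquad \widehat{\partial[E]}(\xi):=i\,(\xi_2,-\xi_1)\,\mathscr{F}[\chi_E](\xi),
\]
so that $\widehat{\partial[E]}=\mathscr{F}[\mu]$ and $|\widehat{\partial[E]}(\xi)|^2=|\xi|^2\,|\mathscr{F}[\chi_E](\xi)|^2$.

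It then remains to check membership in $\Cs$ and to evaluate the norm. By the previous display,
\[
\int_{\R^2}\frac{|\widehat{\partial[E]}(\xi)|^2}{|\xi|^{2-s}}\,d\xi=\int_{\R^2}|\xi|^{s}\,|\mathscr{F}[\chi_E](\xi)|^2\,d\xi .
\]
Here I would invoke the classical Plancherel representation of the Gagliardo seminorm: there is a universal constant $b(s)>0$ such that, for every $f\in L^2(\R^2)$, $\int_{\R^2}\!\int_{\R^2}|x-y|^{-2-s}|f(x)-f(y)|^2\,dx\,dy=b(s)\int_{\R^2}|\xi|^s|\mathscr{F}[f]|^2\,d\xi$ (obtained by Plancherel in the increment variable together with the symbol computation $\int_{\R^2}|h|^{-2-s}|1-e^{ih\cdot\xi}|^2\,dh=C(s)|\xi|^s$). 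Taking $f=\chi_E$ and using $|\chi_E(x)-\chi_E(y)|^2=|\chi_E(x)-\chi_E(y)|$ yields $2P_s(E)=b(s)\int_{\R^2}|\xi|^s|\mathscr{F}[\chi_E]|^2\,d\xi$. Since $P_s(E)<\infty$, the right-hand integral is finite, hence $\widehat{\partial[E]}\in\LK$ and $\partial[E]\in\Cs$ by Definition \ref{setC_s}; moreover Definition \ref{defFractionalMass1currents} gives $M_s(\partial[E])=(2\pi)^{-2}c(s,2)\int_{\R^2}|\xi|^s|\mathscr{F}[\chi_E]|^2\,d\xi=K(s)\,P_s(E)$ with $K(s):=2(2\pi)^{-2}c(s,2)/b(s)$ depending only on $s$.

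To identify $K(s)=s^2$ I would evaluate this universal identity on a single smooth set, e.g. $E=B_1(0)$, for which $0<P_s(B_1)<\infty$ and $\partial[B_1]$ coincides with the integration current over $\partial B_1\in\Gamma$; then Proposition \ref{propmat} and Proposition \ref{propositionFracPerivsFracMass} give $M_s(\partial[B_1])=\mathrm{M}_s(\partial B_1)=s^2P_s(B_1)$, forcing $K(s)=s^2$ and hence $\frac{1}{s^2}M_s(\partial[E])=P_s(E)$ for all admissible $E$. The main point to be careful about is the second step: one must check that the Fourier manipulations are legitimate within the paper's framework, namely that $\overline{\partial[E]}$ acts on $\mathscr{F}[\omega]$ through the explicit integration-by-parts formula (valid since $\partial[E]$ sees only the values of its test form near the compact set $\overline E$) and that the multiplication formula applies to the tempered distribution $\mathscr{F}[\chi_E]$ paired against Schwartz forms. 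The conceptual subtlety worth stressing is that $E$ may have infinite classical perimeter, so $\partial[E]$ can carry infinite mass and the pointwise integration by parts of Proposition \ref{propositionFracPerivsFracMass} is unavailable; the Fourier representation is exactly what circumvents this.
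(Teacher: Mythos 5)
Your proof is correct, and it takes a genuinely different route from the paper's. The paper establishes $\partial[E]\in\Cs$ indirectly: it approximates $E$ by smooth sets $E_n\subset B_R(0)$ with $P_s(E_n)\to P_s(E)$ (quoting a density result \cite{L2018}), obtains the uniform bound $\sup_n\|\partial[E_n]\|_{\Cs}<+\infty$ from Propositions \ref{propositionFracPerivsFracMass} and \ref{propmat}, and invokes the compactness Theorem \ref{compthm} to produce a limit in $\Cs$ that is then identified with $\partial[E]$; the constant is obtained by mollifying $[E]$, computing the representative of $\mathscr{F}[\partial([E]*\rho_\varepsilon)]$, passing to the limit via Lemma \ref{Lemmaconvolution}, and quoting the exact constant of \cite[Proposition 3.4]{dNVP12}. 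You do both jobs directly: the explicit formula $\widehat{\partial[E]}(\xi)=i(\xi_2,-\xi_1)\mathscr{F}[\chi_E](\xi)$ needs no approximation or compactness (and is legitimate: the functional $\eta\mapsto\int_E(\partial_1\eta_2-\partial_2\eta_1)\,dx$ is continuous on $C_0^\infty(\R^2,\Lambda^1(\C^2))$ because $|E|<\infty$, hence coincides with the unique extension $\overline{\partial[E]}$ of Lemma \ref{lemmaexconconv}, so it may be evaluated at $\mathscr{F}[\omega]$), and the Gagliardo--Plancherel identity then gives both membership in $\Cs$ and the relation $M_s(\partial[E])=K(s)P_s(E)$ with a universal constant; finally, testing on $E=B_1$, where Stokes' theorem identifies $\partial[B_1]$ with the circle as an element of $\Gamma$ and Propositions \ref{propmat} and \ref{propositionFracPerivsFracMass} force $M_s(\partial[B_1])=s^2P_s(B_1)$, pins $K(s)=s^2$. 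What your route buys: it is shorter and self-contained, requiring neither the smooth-approximation result, nor Theorem \ref{compthm}, nor the precise value of the Fourier constant for the Gagliardo seminorm (only its existence); what the paper's route buys is a demonstration of its compactness machinery on a concrete problem and an explicit chain of constants without any normalization step. A small remark in your favor: your distributional representative $(\partial_2\chi_E,-\partial_1\chi_E)$ carries the correct sign, whereas the paper's $\nabla^\perp(\rho_\varepsilon*\chi_E)$ with $\nabla^\perp u=(-\partial_2u,\partial_1u)$ is off by a sign --- immaterial in both arguments, since only $|\widehat{\partial[E]}|^2$ enters.
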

\begin{proof}
Let $R>0$ be such that $E\subset\subset B_R$ 	and let $\{E_n\}_{n \in \N}$ be a family of sets such that $ E_n \subset B_R(0)$, $ \partial E_n$ is of class $C^{\infty}$, $ \mathcal{L}^{2}(E_n \Delta E) \rightarrow 0 $ as $n \rightarrow + \infty$, and $ P_s(E_n) \rightarrow P_s(E) < +\infty$ as $n \rightarrow + \infty$; we can choose such a sequence thanks to \cite{L2018}. We set $ \gamma_n:= \partial [E_n]$. By Proposition \ref{propositionFracPerivsFracMass}, we have that $ P_s(E_n)=\frac{1}{s^2} M_s(\gamma_n)$ for all $n \in \N$ and, thus, we have that $\sup_{n\in\mathbb{N}}\|\gamma_n\|_{\Cs} < +\infty$. Hence, by Theorem \ref{compthm}, we obtain that $ \{\overline{\gamma}_n\}_{n \in \N}$ is a precompact set in $C_0^{k,\alpha}(B_R(0),\Lambda^1(\R^2) )^*$ for all $k >2-\frac{s}{2}$ where $\overline{\gamma}_n$ is the extension of $\gamma_n \in \mathcal{D}_1$ as in Proposition \ref{propositionEtensionCkalpha}. Hence, up to a subsequence, $\overline{\gamma}_n \rightarrow \gamma$ in $C_0^{k,\alpha}(B_R(0),\Lambda^1(\R^2) )^*$ with $\gamma\lfloor_{\mathcal{D}^1} \in \Cs(\overline{B_R(0)})$ for all $k >2-\frac{s}{2}$. Therefore, in particular, $\partial [E_n]=\gamma_n \to \gamma\lfloor_{\mathcal{D}^1}$ in the weak star topology of $\mathcal{D}_1(\overline{B_R(0)})$ and hence $ \partial [E]= \gamma\lfloor_{\mathcal{D}^1}$. 
	
	We now claim that $\frac{1}{s^2}M_s(\partial [E])= P_s(E)$. We first observe that, for all $\varepsilon>0$, $\partial[E] * \rho_{\varepsilon}= \partial \left( [E]* \rho_{\varepsilon}\right)$ where $\{\rho_{\varepsilon}\}_{\varepsilon>0}$ is the family of the Friedrich mollifiers. We may also observe that the representative of the Fourier transform of the boundary of the $2$-current $[E] * \rho_{\varepsilon}$ is $\mathscr{F}[\nabla^\perp (\rho_{\varepsilon} * \chi_{E})]$ where $\nabla^{\perp}u(x) \coloneqq (-\partial_2 u(x),\,\partial_1 u(x))$ for any $x \in \R^2$ and $u \in C^1(\R^2)$. Indeed, letting $\eta \in \mathcal{S}(\R^2, \Lambda^1(\C^2))$, we can choose $\omega \in \mathcal{S}(\R^2, \Lambda^1(\C^2))$ such that $\eta = \overline{\mathscr{F}[\omega]}^*$. Then, from the Plancherel-type theorem and the divergence theorem, we have
	\begin{align*}
		\mathscr{F}\left[\partial ([E] * \rho_{\varepsilon})\right]\left( \eta \right) = (2\pi)^2\, \overline{\partial \left([E] * \rho_{\varepsilon}\right)}(\omega) &= (2\pi)^2\, \int_{\R^{2}} \langle d\omega, (\rho_{\varepsilon}*\chi_{E})\,e_1\wedge  e_{2} \rangle \,dx \nonumber\\
		&= (2\pi)^2 \int_{\R^{2}} \langle \omega, \nabla^{\perp} \left(\rho_{\varepsilon}*\chi_{E}\right) \rangle \,dx \nonumber\\
		&= \int_{\R^{2}}\langle \mathscr{F}\left[ \eta \right],  \nabla^{\perp}\left(\rho_{\varepsilon}*\chi_{E}\right) \rangle \,dx \nonumber\\
		&= \int_{\R^{2}}\langle \eta,  \mathscr{F}[\nabla^{\perp}\left(\rho_{\varepsilon}*\chi_{E}\right)] \rangle \,dx.
	\end{align*} 
	Here $\overline{\partial \left([E] * \rho_{\varepsilon}\right)}$ is the extension of $\partial \left([E] * \rho_{\varepsilon}\right)$ as in Lemma \ref{lemmaexconconv}. This implies that the representative of the Fourier transform of the boundary of $ [E] * \rho_{\varepsilon}$ is $\mathscr{F}[\nabla^{\perp} (\rho_{\varepsilon} * \chi_{E})]$.

	Therefore we have

	\begin{align}
		M_s(\partial [E]* \rho_\varepsilon) = M_s(\partial ([E]* \rho_{\varepsilon})) &= (2\pi)^{-2} \int_{\R^{2}} \vert \widehat{\partial ([E]* \rho_{\varepsilon})} \vert^2(\xi)c(s,2) K_{2-s}(\vert \xi\vert) \,d\xi \nonumber\\
		&= (2\pi)^{-2}\int_{\R^{2}} \vert \mathscr{F}[\nabla^{\perp} \rho_{\varepsilon} * \chi_{E}] \vert^2(\xi) c(s,2) K_{2-s}(\vert \xi\vert) \, d\xi \nonumber\\
		&= (2\pi)^{-2}\int_{\R^{2}} \vert \xi \vert^2 \, \vert \mathscr{F}[\rho_{\varepsilon}] \vert^2 (\xi) \, \vert \mathscr{F}[\chi_{E}] \vert^2 (\xi) \frac{c(s,2)}{\vert \xi \vert^{2-s}} \,d\xi \nonumber\\
		&= (2\pi)^{-2} c(s,2)\int_{\R^{2}} \vert \xi \vert^s \, \vert \mathscr{F}[\rho_{\varepsilon}] \vert^2 (\xi) \, \vert \mathscr{F}[\chi_{E}] \vert^2(\xi) \,d\xi
	\end{align}
	where $c(s,2)$ is the constant in \eqref{20122023matt1}.
	Now sending $\varepsilon $ to $0$ by Lemma \ref{Lemmaconvolution} we obtain 
	\begin{equation}
		M_s(\partial [E])=(2\pi)^{-2} c(s,2) \int_{\R^{2}} \vert \xi \vert^s  \vert \mathscr{F}[\chi_{E}] \vert^2(\xi) d \xi= s^2P_s(E)
	\end{equation}
	where we have used \cite[Proposition 3.4]{dNVP12} in the last equality. Hence we obtain the claim.
\end{proof}
	\begin{remark}
		We observe that if $E \subset \R^2$ is a bounded measurable set such that $ \partial [E] \in \mathcal{C}_s$ then $E$ it has finite $s$-fractional perimeter. Indeed by the above proof we have 
		\begin{equation}
			M_s(\partial [E])=(2\pi)^{-2} c(s,2) \int_{\R^{2}} \vert \xi \vert^s  \vert \mathscr{F}[\chi_{E}] \vert^2(\xi) d \xi= s^2P_s(E).
		\end{equation}
	\end{remark}
\section{Approximation theorem}
	Our last result in this paper is the approximation of a divergence-free vector field with finite $s$-fractional mass by closed piecewise-linear curves. Precisely, we prove that, for any divergence-free vector field with compact support and finite $s$-fractional mass, there exists a sequence of closed piecewise-linear curves such that the corresponding  Radon measures associated with the curves converge to the vector field and its $s$-fractional mass also converge to that of the vector field. 
	
%	\fumihiko{01/12/2023. I added a comment about the idea of the proof.}
%	
%	\fumihiko{05/12/2023. I slightly modified the statement.}
	
	The idea of the proof is inspired by the one by Fortuna and Garroni in \cite{FoGa23} and it goes as follows: we first decompose the support of a divergence-free vector field into finite number of small cubes. Then, in each cube, we construct a finite number of curves (lines) whose direction is almost parallel to the average of the vector field in the cube. To construct these lines, we consider the flux of the vector field on each face of each cube and, according to each flux, we choose a lattice whose size is much smaller than that of the cube. By ``properly'' choosing points on the lattice on each face, we could define a closed piecewise-constant ``curve'' (strictly speaking, some Radon measure supported on a piecewise-constant curves) such that it approximates the vector field in the sense of measure and such that its $s$-fractional mass converges to that of the vector field. Note that  the approximating curves lie in  a ``small'' neighborhood of the support of the vector field and, at this point, our strategy is different from the one by Fortuna and Garroni in \cite{FoGa23}, where the approximating curves  can stretch to infinity.  
\begin{theorem}\label{teoremaapprosi}
	Let $\psi \in C^1_c (\R^d, \R^d)$ with $\dv \psi = 0$ and let $\delta_n\to 0$.   
	
	There exists a sequence of families of closed piecewise linear curves $\Gamma_n:= \sum_{i=1}^{N(n)} \gamma^i_n$ 
	%parameterized by $\gamma^i_n: [0,\,a_{i,n}] \to \mathbb{R}^d$ for $i,\,n \in \mathbb{N}$ 
	such that, denoting by $\mu_n := \sum_{i=1}^{N(n)} \delta_n \frac{\dot{\gamma}^i_n}{ |\dot{\gamma}^i_n|} \,\mathcal{H}^1\lfloor_{\gamma^i_n}$ the corresponding $1$-forms, 
	we have
	\begin{align}\label{07012024sera1}
		& \mu_n 
		\xrightarrow[n 
		\to +\infty]{} \psi \quad \text{weakly$^*$ in $\mathcal{M}_b(\mathbb{R}^d;\mathbb{R}^d)$} \\
		& |\mu_n|(\R^d)  \xrightarrow[n  \to +\infty]{} \| \psi \|_{L^1} \\
		& M_s(\mu_n) (\R^d)  \xrightarrow[n  \to +\infty]{}M_s(\psi) \, .
	\end{align}
\end{theorem}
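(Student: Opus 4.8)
The plan is to obtain all three convergences from one explicit construction of $\Gamma_n$, after first isolating the analytic heart of the statement. I identify $\psi$ with the current $\psi\,\mathcal{L}^d\in\Cs$, so that $\widehat\psi=\mathscr{F}[\psi]$; note $\psi\in\Cs$, since $\widehat\psi\in\mathcal S$ and $\widehat\psi(0)=\int_{\R^d}\psi\,dx=0$ (because $\int\psi_j=\int\dv(x_j\psi)=0$), which makes $\int|\widehat\psi|^2|\xi|^{s-d}\,d\xi$ finite. Since $M_s$ is, up to $(2\pi)^{-d}$, the square of the Hilbert seminorm $\|\cdot\|_{\Cs}$, writing $B_s(\mu,\nu):=(2\pi)^{-d}c(s,d)\,\mathrm{Re}\int_{\R^d}\langle\widehat\mu,\widehat\nu\rangle\,|\xi|^{s-d}\,d\xi$ for the associated real bilinear form, I have the polarization identity
\[
M_s(\mu_n)-M_s(\psi)=2\,B_s(\mu_n-\psi,\psi)+(2\pi)^{-d}\|\mu_n-\psi\|_{\Cs}^2 .
\]
Once $\mu_n\weakstar\psi$ with uniformly bounded supports and masses, the cross term tends to $0$: indeed $\widehat{\mu_n}\to\widehat\psi$ pointwise with $|\widehat{\mu_n}|\le|\mu_n|(\R^d)\le C$, while $\xi\mapsto\widehat\psi(\xi)\,|\xi|^{s-d}$ lies in $L^1$ (using $\widehat\psi(0)=0$ near the origin and $\widehat\psi\in\mathcal S$ at infinity), so dominated convergence applies. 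Thus the fractional-mass convergence reduces entirely to $\|\mu_n-\psi\|_{\Cs}\to0$; equivalently, by the lower bound of Proposition \ref{lose}, it suffices to prove the upper bound $\limsup_n M_s(\mu_n)\le M_s(\psi)$.

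For the construction I would follow the Fortuna--Garroni scheme. I cover a fixed neighbourhood of $\supp\psi$ by a cubic grid $\{Q_k\}$ of mesh $h_n\to0$; on each cube set $\bar\psi_k:=|Q_k|^{-1}\int_{Q_k}\psi\,dx$ and, for each face $F$ with outer normal $\nu_F$, record the flux $\Phi_F:=\int_F\psi\cdot\nu_F\,d\mathcal H^{d-1}$. Since $\dv\psi=0$, the divergence theorem gives the balance $\sum_{F\subset\partial Q_k}\Phi_F=0$. On every face I place a common auxiliary lattice of spacing $\ell_n$ with $\ell_n\ll h_n$, and select on $F$ about $|\Phi_F|/\delta_n$ points whose areal density matches $|\psi\cdot\nu_F|/\delta_n$; these serve as entry points (where $\psi\cdot\nu_F<0$) and exit points (where $\psi\cdot\nu_F>0$) of the strands, which inside $Q_k$ I join by segments essentially parallel to $\bar\psi_k$, so that the bundle has vector density $\approx\bar\psi_k$. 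Using the same lattice and the flux balance on each shared face, exit points of $Q_k$ match entry points of the neighbouring cube; since $\psi\equiv0$ near the outer boundary of the grid, no strand escapes and the segments concatenate into finitely many closed polygonal loops $\gamma^i_n$, where short connectors of total length $o(1)$ absorb the direction mismatch $|\bar\psi_k-\bar\psi_{k'}|\lesssim h_n$ between adjacent cubes. This defines $\Gamma_n$ and $\mu_n$, supported in a shrinking neighbourhood of $\supp\psi$.

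The first two convergences then follow by testing: on each cube the bundle has vector measure $\approx\bar\psi_k\,\mathcal{L}^d\lfloor_{Q_k}$ and $\mathcal H^1$-density $\approx|\bar\psi_k|$, so pairing $\mu_n$ with $\varphi\in C_c(\R^d;\R^d)$ and summing over $k$ produces errors controlled by $\|\nabla\psi\|_\infty h_n$ (cube averaging), by the lattice quantisation, and by the connector length, all $o(1)$; letting $n\to\infty$ yields $\mu_n\weakstar\psi$ and $|\mu_n|(\R^d)=\delta_n\sum_i\mathcal H^1(\gamma^i_n)\to\int_{\R^d}|\psi|\,dx$.

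It remains to prove $\|\mu_n-\psi\|_{\Cs}^2=c(s,d)\int_{\R^d}|\widehat{\mu_n}-\widehat\psi|^2|\xi|^{s-d}\,d\xi\to0$, and I would split the integral at $|\xi|=R$. On $\{|\xi|\le R\}$ the uniformly bounded supports make $\{\widehat{\mu_n}\}$ equibounded and equicontinuous with $\widehat{\mu_n}\to\widehat\psi$ uniformly on compacts; together with the uniform bound $|\widehat{\mu_n}(\xi)|\le C|\xi|$ near the origin (valid because the loops are closed, so $\widehat{\mu_n}(0)=0$, and $|\nabla\widehat{\mu_n}|\le\int|x|\,d|\mu_n|\le\diam(\supp\mu_n)\,|\mu_n|(\R^d)\le C$) this forces the low-frequency part to vanish by dominated convergence. \textbf{The main obstacle is the uniform smallness of the high-frequency tail} $\int_{|\xi|>R}|\widehat{\mu_n}|^2|\xi|^{s-d}\,d\xi$, i.e. of the short-range energy $\iint_{|x-y|\le 1/R}\frac{d\mu_n\cdot d\mu_n}{|x-y|^s}$. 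Here the self-interaction of a single weighted segment is $O(\delta_n^2 h_n R^{s-1})$, and summing over the $O(|\psi|/(h_n\delta_n))$ segments gives $O(\delta_n R^{s-1})\to0$; the cross-interaction of nearby co-oriented strands converges, as $\ell_n\to0$, to the short-range energy of the continuum field $\psi$, which itself vanishes as $R\to\infty$. This near-diagonal control is where essentially all the estimates sit, and it is what forces the quantitative hierarchy $\ell_n\ll h_n\to0$: by a diagonal argument one chooses $\ell_n$ so small that at every fixed scale $1/R$ the transverse lattice corrections are negligible. Combining the two regimes gives $\|\mu_n-\psi\|_{\Cs}\to0$ and hence $M_s(\mu_n)\to M_s(\psi)$.
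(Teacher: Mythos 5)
Your overall architecture mirrors the paper's: the same Fortuna--Garroni-type construction (cubes of mesh $h_n$, face lattices whose point density is $|\psi\cdot\nu_F|/\delta_n$, strands inside each cube essentially parallel to the cube average, concatenation across shared faces and closure of the leftover boundary points), the same treatment of the first two limits, and the same two-scale splitting of the energy into a part that converges because of weak-$*$ convergence plus mass convergence and a near-diagonal/high-frequency part that must be shown to be \emph{uniformly} small in $n$. The polarization and lower-semicontinuity reduction, and the choice to split in Fourier space rather than in real space, are genuine but minor variations; your cross-term and low-frequency arguments (using $\widehat{\mu}_n(0)=\widehat{\psi}(0)=0$, uniformly bounded masses and supports, and dominated convergence) are sound.

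The genuine gap sits exactly at the step you yourself flag as ``the main obstacle'': the uniform smallness of the high-frequency tail is asserted, not proved. Two things are missing. First, the identification of $\int_{|\xi|>R}|\widehat{\mu}_n|^2|\xi|^{s-d}\,d\xi$ with the short-range energy $\iint_{|x-y|\le 1/R}|x-y|^{-s}\,d\mu_n\cdot d\mu_n$ is not legitimate as written: the inverse Fourier transform of $|\xi|^{s-d}\chi_{\{|\xi|>R\}}$ is neither the truncated kernel $|x|^{-s}\chi_{\{|x|\le 1/R\}}$ nor even nonnegative. This can be repaired, for instance by bounding the tail by $R^{s-s'}\int_{\R^d}|\widehat{\mu}_n|^2|\xi|^{s'-d}\,d\xi$ for some $s'\in(s,1)$, but then one needs a uniform bound on $M_{s'}(\mu_n)$, which brings you back to a real-space near-diagonal estimate. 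Second, and decisively, for that near-diagonal estimate you only compute the self-interaction of individual segments and then assert that the cross-interaction of nearby co-oriented strands ``converges to the short-range energy of the continuum field.'' That cross-interaction is the dominant term, no proof is offered, and ``convergence at a fixed scale'' is in any case not the statement needed: one needs a bound, uniform in $n$, that vanishes as the scale does. The paper closes exactly this point with a quantitative consequence of the construction: the weight $\delta_n$ and the transverse strand spacing $\sim(\delta_n/\|\psi\|_{L^\infty})^{1/(d-1)}$ combine to give the uniform growth estimate $|\mu_n|(B_r(x))\le Cr$ for all $x$ and $r$, whence by a rearrangement argument $\int_{B_\rho(x)}|x-y|^{-s}\,d|\mu_n|(y)\le C\rho^{1-s}$, so the near-diagonal energy is at most $C\rho^{1-s}$ uniformly in $n$. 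Note also that the transverse spacing is dictated by the prescribed sequence $\delta_n$, so your final remark that ``one chooses $\ell_n$ so small that the transverse lattice corrections are negligible'' is not available: there is no free transverse parameter to send to zero, and the required control must be extracted from the $\delta_n$-scaling itself, as in the paper.
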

%\fumihiko{04/12/2023. I wondered if we can obtain the above theorem for more general vector field such as a compactly supported function $\psi \in W^{s,2}(\mathbb{R}^d)$ with $\div \psi = 0$ in the sense of distribution. Because, as far as I looked at the proof, the pointwise derivative of $\psi$ is not used to prove the claim. As for the estimate of $\|\psi - \overline{\psi}\|_{L^1(Q)}$, we may use the fractional Sobolev inequality, saying that
%\begin{equation}
%	\|\psi - \overline{\psi}\|_{L^1(Q)} \leq C\, \varepsilon^{\alpha} \,[\psi]_{W^{s,2}}
%\end{equation}
%for $\alpha = (d+s)/2$.}

%\fumihiko{11/12/2023. As long as Corollary \ref{corollaryApproxCurrents} holds true, the above theorem is reasonable.}

\begin{proof}
		We divide the proof in several steps.
	
	Step 1:{\it  Construction of the curves and  error estimates in the 
	$\mathcal{M}_b(\R^d, \R^d)$ norm.}
	Fix $\rho, \, \delta, \,  \varepsilon > 0$. We cover $\supp \psi$ by a family
	$\mathcal{Q_\e} = \{Q_1, \ldots Q_{N(\varepsilon)}\}$ of pairwise disjoint  open cubes of 
	side length $\varepsilon$, with  $N(\varepsilon) \leq \frac{C}{\varepsilon^d}$.
	Let us also set $\mathcal Q_{\e,\rho}$ as the family of cubes in $ \mathcal{Q_\e}$ such that $\|\psi\|_{L^\infty}(Q)\ge \rho$. 
	sdf
%	Let $Q\in\mathcal Q_{\e}$ and 
	Notice that if $Q$ belongs to $\mathcal Q_{\e,\rho}$ then, for $\e$ small enough we have that  the average of $\psi$ on $Q$ is different than zero,  and we may set
	%such that $\bar \psi \not = 0$ where
	\begin{equation*}
		\bar \psi := \frac{1}{\varepsilon^d} \int_Q \psi dx, \quad \eta := \frac{\bar \psi}{|\bar \psi|}. 
	\end{equation*}
	Given  $Q\in\mathcal Q_{\e}$, consider $(d-1)$-square lattices  on the faces $F$ of $Q$ (arbitrarily oriented and) with a lattice spacing $d_F$ determined by 
	\begin{equation}\label{dedi}
		\frac{\delta \hd(F)}{d_F^{d-1}} = \left|\int_F \psi \cdot \nu_F d \hd \right| \, .
		% \quad\fumihiko{\left| \int_F \psi \cdot \nu_F d \hd \right|...?}
	\end{equation}
	% and such that the edges of the lattice are parallel to the edges of the face $F$.
	(If $\int_F \psi \cdot \nu_F d \hd = 0$, we have the empty lattice.)
	We define
	\begin{align*}
		&S^+ := \cup \{ F \text{ face of } Q : \int_F \psi \cdot \nu_F d \hd > 0 \} \quad \text{and} \\
		&S^- := \cup \{ F \text{ face of } Q : \int_F \psi \cdot \nu_F d \hd < 0 \}.
	\end{align*}
	  Moreover, for $Q\in \mathcal Q_{\e,\rho}$ we introduce the measures $\nu^\pm:= \sum_{p\in L^\pm} \pm \delta_p$ where $L^\pm$ is the set of lattice points that lie in $S^{\pm}$.  We define $N^\pm(Q):= |\nu^\pm|(\overline Q)$.
	%Every lattice point $p$ we assign the sign $+$ resp $-$, if $p \in S^+$ resp.  $p \in S^-$.
	Consider a family $\mathcal{Z}$ of open, pairwise disjoint parallelepipeds, whose closure tiles $
	\R^d$. Assume moreover that each of the parallelepipeds in $\mathcal Z$ is
	parallel to $\eta$ and it has a square cross-section of side length $\varepsilon^2$.
	For every cylinder $Z\in{\mathcal Z}$ we define $N^\pm(Z) = |\nu^\pm|(Z)$  and $L(Z) := |N^+(Z) - N^-(Z)|$.
	
	Now, we estimate the following quantities:
	\begin{itemize}
		\item $|N^+(Q) - N^-(Q)|$:
		By \eqref{dedi} we have 
		\begin{equation*}
			d_F \geq (\|\psi\|^{-1}_{L^\infty}  \delta)^{\frac{1}{d-1}} =: \Delta.
		\end{equation*}
		Notice that, for each face $F$ of $Q$,  $\delta$ times the number of lattice points $|\nu|(F)$ in the face $F$ coincides with the flux of $\psi$ through $F$,  up to  errors arising from  discretization; 
		more precisely, 
		\begin{equation*}
			\left| |\nu|(F) - \frac{1}{\delta} \left| \int_F \psi \cdot \nu_F d \hd \right| \right|
			\leq C \left( \frac{\varepsilon}{\Delta} \right)^{d-2}.
		\end{equation*}
		Since $\psi$ is divergence-free, the flux through $\partial Q$ is zero and it follows that
		\begin{equation*}
			|N^+(Q) - N^-(Q)| \leq C \left( \frac{\varepsilon}{\Delta} \right)^{d-2}
			=C \left( \varepsilon\delta^{\frac{1}{1-d}} \right)^{d-2}.
		\end{equation*}
		
		\item L(Z): 
				We denote by $\tilde \psi:\partial Q\to \R^d$ the piecewise constant function 
				coinciding with  $\frac{1}{|F|} \int_F \psi  \hd$ on each face $F$ of $Q$.  
				%Notice that, for $\e$ small enough (depending on $\rho$) we have that the sign of 
				%$\tilde \psi \cdot \nu$ agrees with that of $\bar \psi \cdot \nu$.
				Let 
				$$
				A^+:= \{ x\in \partial Q\cap Z : \bar \psi \cdot \nu_{\partial Q}(x) \ge 0 \}, \quad 
				A^-:= \{ x\in \partial Q\cap Z : \bar \psi \cdot \nu_{\partial Q}(x) < 0 \} \, 
				$$ 
				 	As in the step above, we have 
	%		and writing $A$ for the area of the cross-section of $Z$ we have
	\begin{equation}
			\left |    |N^+(Z) - N^-(Z)| - \frac{1}{\delta}   
			\left| \int_{(A^+ \cup A^-)} \tilde \psi \cdot \nu d \hd\right| \right|\le C \left( \frac{\varepsilon^2}{\Delta} \right)^{d-2} \, .
					\end{equation}

		Notice  that for the constant function $\bar \psi$ (which is parallel to Z), we have 
		\begin{equation*}
			\int_{A^+}\bar \psi \cdot \nu  d \hd  + 
			\int_{A^-}\bar \psi \cdot \nu  d \hd =0 \, .
		\end{equation*}
		Therefore, using that $\|\tilde\psi-\bar\psi\|_{L^{\infty}(Q)}\leq C\, \e$ we obtain
		\begin{align*}
			L(Z) &= |N^+(Z) - N^-(Z)| \\
			& \leq C \left( \frac{\varepsilon^2}{\Delta} \right)^{d-2}  + \left| \frac{1}{ \delta } 
			\int_{A^+} \tilde \psi \cdot \nu d \hd + \frac{1}{ \delta} 
			\int_{A^-} \tilde \psi \cdot \nu d \hd  \right| \\
			&=  C \left( \frac{\varepsilon^2}{\Delta} \right)^{d-2}  + \left| \frac{1}{ \delta } 
			\int_{A^+} (\tilde \psi-\bar\psi) \cdot \nu d \hd + \frac{1}{ \delta} 
			\int_{A^-} (\tilde \psi-\bar\psi) \cdot \nu d \hd  \right| \\
%			&\leq C \left( \frac{\varepsilon^2}{\Delta} \right)^{d-2}  +  \frac{1}{ \delta } 
%			\int_{A^+} |\tilde \psi-\bar\psi  | d \hd + \frac{1}{ \delta} 
%			\int_{A^-} |\tilde \psi-\bar\psi|  d \hd   \\
			&\leq C \left( \frac{\varepsilon^2}{\Delta} \right)^{d-2} + C\frac{\varepsilon \hd (Z \cap S^+)}{\delta} +  C\frac{\varepsilon \hd (Z \cap S^-)}{\delta}\\ 
			& \le  C \left( \frac{\varepsilon^2}{\Delta} \right)^{d-2} + C \frac{\e^{2d-1}}{\delta} \le C \frac{\e^{2d-4}}{\delta^{\frac{d-2}{d-1}}} + C \frac{\e^{2d-1}}{\delta}. 
		\end{align*}
		Since the cardinality of all the cylinders $Z$ intersecting $Q$ is estimated by $C\,\e^{1-d}$ we obtain 
		\begin{equation}\label{ellezeta}
			\sum_{Z: \, Z\cap Q \neq \emptyset} L(Z) 
			\leq C \left(  \frac{\varepsilon^{d-3}}{\delta^{\frac{d-2}{d-1}}}  + \frac{\varepsilon^d}{\delta}\right) \, .
		\end{equation}
			\end{itemize}
	
	Now   we have to connect as much as possible pairs of points with positive and negative signs  inside $Q$; we describe such a construction step by step. 
	If $Q\in\mathcal Q_{\e,\rho}$, we first connect all such pairs that lie in the same cylinder $Z$ and we repeat  this process for all the
		cylinders. To this purpose, fix a cylinder $Z$ and set $N_{min}(Z):= \min \{ N^-(Z), \,  N^+(Z)\}$. Then, arbitrarily select  $N_{min}(Z)$ positive and negative masses 
		$\{x^\pm_1, \ldots, x^\pm_{N_{min}(Z)}\} \subset \supp(\nu^{\pm} \llcorner Z)$ 
		and connect them with segments, obtaining a  one current $\mu_Z$ such that $\partial \mu_Z = \sum_{i=1}^{N_{min}(Z)} \delta_{x^+_i} - \delta_{x^-_i}$.  Summing over all cylinders $Z$ intersecting $Q$ we get a current denoted by $\mu^g_Q$.

Notice that for any tangent unit vector $\frac{x_i^+ - x_i^-}{|x_i^+ - x_i^-|}$ to  $\mu^g_Q$ either $|x_i^+ - x_i^-| \le \rho \e$ or $\frac{x_i^+ - x_i^-}{|x_i^+ - x_i^-|}$ differs from $\eta$ at most by $C(\rho) \e$, for some constant $C(\rho)$ depending on $\rho$.
%, or  (this may happen if $\eta$ is almost parallel to $Q$ and if both $x_i^+, \, - x_i^-$ belong to $A^+$ or to $A^-$). 
Therefore, 
by \eqref{ellezeta}, one can easily prove that
		\begin{equation}\label{ellezeta2}
			||\psi \mathcal{L}^d|(Q)  - \delta |\mu^g_{\varepsilon,\delta}|(Q)| \le  C(\rho)  \left(   {\delta^{\frac{1}{d-1}   }}  \varepsilon^{ d-2}  + \e^{d+1}   \right) + C \rho \e^d \, .
		\end{equation}
Analogously, one can easily prove that for all $ \omega \in C_c^{0}(Q ; \Lambda^1)$ with $ \| \omega \|_{C_c^{0}(Q ; \Lambda^1)} \leq 1$ 
		\begin{equation}\label{ellezeta3}
			| \langle \psi   - \delta \mu^g_{\varepsilon,\delta} , \omega \rangle \vert \le  C(\rho)  \left(   {\delta^{\frac{1}{d-1}   }}  \varepsilon^{ d-2}  + \e^{d+1}  \right)  + C \rho \e^d \, .
		\end{equation}

Now,  either if $Q\in \mathcal Q_\e$ or $Q\in \mathcal Q_{\e,\rho}$,  on $\partial Q$ there are still many positive and negative masses, and as before we connect as much as possible of these masses with segments, obtaining a 1-current $\mu^b_Q$. By construction we have
$$
|\nu^+ + \nu^- - \partial (\mu^g_Q + \mu^b_Q)|= |N^+(Q) - N^-(Q)| \, .
$$
Note that this construction does not connect all Dirac masses we have constructed on $\partial Q$. Some of them will be actually left isolated and will not contribute to the approximation procedure. We set 	
$$
\mu^b_{\e,\delta}:= \sum_Q \mu^b_Q, \qquad 
\mu^g_{\e,\delta}:= \sum_Q \mu^g_Q, \qquad
\mu_{\e,\delta}^{Tot}:= \mu^b_{\e,\delta} +\mu^g_{\e,\delta} \, .
$$
Notice that, in general, $\mu_\e^{Tot}$ is not closed. 
Anyway, by construction $\partial \mu_\e^{Tot}(\R^d)=0$. Therefore, we can close $\mu_\e^{Tot}$ connecting its boundary (which contains the same number of positive and negative masses) with suitably oriented segments, namely,  by adding a $1$-current  $\mu_{\varepsilon,\delta}^l$. Recalling that $\psi$ has bounded support,
the number of such oriented segments, and therefore the total mass of  $\delta \mu_{\varepsilon,\delta}^l$ can be bounded by
	\begin{equation}\label{muelle}
		|\delta \mu_{\varepsilon,\delta}^l| (\R^d) \le  C 	\delta \sum_{Q \in \mathcal {Q}} |N^+(Q) - N^-(Q)| \leq 
		C \varepsilon^{-2}  \delta^{\frac{1}{d-1}}\, .
	\end{equation}

%	\begin{equation*}
%		\diam(\supp \psi)   \sum_{Q \in \mathcal {Q}} |N^+(Q) - N^-(Q)| \leq 
%		\frac{C}{\varepsilon^2 \delta^{1/(d-1)}}
%		\le C \varepsilon^{-2} \delta^{1/(d-1)} 
%	\end{equation*}

Recalling \eqref{ellezeta} we have

\begin{eqnarray}\label{varbad}	
 |\delta \mu_{\varepsilon,\delta}^b|(\R^d) &\le& \e\delta \sum_{Q}
			\sum_{Z: \, Z\cap Q \neq \emptyset} L(Z) + C \rho\\
			&\leq&  C \delta \e^{-d+1} \left(  \frac{\varepsilon^{d-3}}{\delta^{\frac{d-2}{d-1}}}  + \frac{\varepsilon^d}{\delta}\right) + C \rho
			%\le  C \delta  \left(  \frac{\varepsilon^{-2}}{\delta^{\frac{d-2}{d-1}}}  + \frac{\e}{\delta}\right) = 
			= C   \left(  \varepsilon^{-2}\delta^{\frac{1}{d-1}}  + {\e}\right)  + C \rho \, .
\end{eqnarray}

Now, we set
$$
\mu_{\e,\delta}:= \delta ( \mu^b_{\e,\delta} +\mu^g_{\e,\delta} + \mu_{\varepsilon,\delta}^l )\, .
$$
By construction $\partial \mu_{\e,\delta}=0$. Moreover, by \eqref{muelle} and \eqref{varbad} we have

$$
|\mu_{\e, \delta} -\delta \mu^g_{\e,\delta}|(\R^d) \le r(\rho, \e,\delta)  \qquad \text{ with } \limsup_{\rho \to 0} \limsup_{\e \to 0} \limsup_{\delta \to 0} r(\rho, \e,\delta)=0\, .
$$

\noindent Step 2: {\it Strict convergence to $\psi$.}

		By \eqref{ellezeta2} we have
	\begin{eqnarray*}
		||\psi \mathcal{L}^d| (\R^d) - |\mu_{\varepsilon,\delta}|(\R^d)|  &\leq &
		%C  \e^d \sharp (\mathcal Q_\e \setminus \mathcal Q_{\e,\rho}) +
		\sum_{Q \in \mathcal{Q_{\e,\rho}}} ||\psi \mathcal{L}^d|(Q)  - \delta |\mu^g_{\varepsilon,\delta}|(Q)|  + r(\rho, \e,\delta)\\
		&\le& C \e^{-d} C  \left(   {\delta^{\frac{1}{d-1}   }}  \varepsilon^{ d-2}  + \e^{d+1}  \right)  + r(\rho, \e,\delta) \le r(\rho,\e,\delta) \, .
 \end{eqnarray*}
% where $r(\rho)\to 0$ as $\rho \to 0$ and  $r(\rho,\e,\delta)$ satisfies 
% $$
 %\limsup_{\rho \to 0} \limsup_{\e\to 0} \limsup_{\delta\to 0} r(\rho,\e,\delta) =0 \, .
% $$
Analogously, by \eqref{ellezeta3} it easily follows  that,  for all $ \omega \in C_c^{0}(\R^d, \LamRd)$ with $ \| \omega \|_{C_c^{0}(\R^d, \LamRd)} \leq 1$ 
		$$
			| \langle \psi   - \delta \mu^g_{\varepsilon,\delta} , \omega \rangle | \le  r(\rho,\e,\delta)\, .
		$$

\noindent Step 3: {\it Estimate of the error for the $s$-fractional mass and conclusion of the proof.}
By the steps $1$ and $2$, using a standard diagonal argument, we deduce that there exists a sequence $\{\mu_{n}\}_{n \in \N}$  satisfying the first two formulas in  \eqref{07012024sera1}.
Moreover, by construction there exists a constant $C$, depending only on $\|\psi \|_{L^\infty}$ such that 
$|\mu_{n}|(B_r(x)) \le Cr$ for all $x\in\R^d$, \, $r>0$.  Let  $\nu_n$ be the measure on $\R^+$ defined as the derivative of the (monotone) function  $r\mapsto  |\mu_{n}|(B_r(x))$. 
Notice that the average of $\nu_n$ on every right neighborhood of zero is less then or equal to $C$; by an easy rearrangement argument, for all $x\in\R^d$ and $R>0$ we have
$$
\int_{B_R(x)} \frac{1}{|x-y|^s} \, d|\mu_{n}| = \int_0^R  \frac{1}{t^s} \, d \nu_{n} \le  C \int_0^R \frac{1}{t^s} \, dt = CR^{1-s} \, .
$$
Then, setting $D(R):= \{(x,y) \in \R^d \times \R^d \mid |x -y | < R \}$ we have
$$
% \left |\int \int \chi_{D(R)} \frac{1}{|x-y|^s} d\mu_{n} \otimes d\mu_{n} \right| \le
\int_{\R^d} \int_{\R^d} \chi_{D(R)} \frac{1}{|x-y|^s} d|\mu_{n}| \otimes d|\mu_{n}| \le C R^{1-s},
$$	
and the same estimate clearly holds true replacing $\mu_n$ with $\psi \, d\mathcal L^d$.
Finally, in view of the first two formulas in  \eqref{07012024sera1} it easily follows that
$$
\int_{\R^d} \int_{\R^d} (1-\chi_{D(R)}) \frac{1}{|x-y|^s}  \frac{d \mu_n}{d |\mu_n|} \cdot  \frac{d \mu_n}{d |\mu_n|}   d|\mu_{n}| \otimes d |\mu_{n}|  
$$	
converges, as $n\to +\infty$, to 
$$
\int_{\R^d} \int_{\R^d} (1-\chi_{D(R)}) \frac{1}{|x-y|^s}  \psi(x) \cdot \psi(y) \, dx \, dy.
$$	
Letting $R\to 0$ we get also the third formula in \eqref{07012024sera1}  .
\end{proof}

%\begin{lemma} \label{convMs}
%	Given a sequence $(\mu_h)_h \subset \mathcal{M}_b(\R^d)$ and a measure
%	$\mu_\infty \in \mathcal{M}_b(\R^d)$  supported on a compact set $K$ such that
%	\begin{equation*}
%		\mu_h \weakstar \mu_\infty \quad \text{and} \quad
%		\sup_{ h \in \N \cup \{\infty \} }\ \int_{D(r)} \frac{1}{|x-y|^s} d \mu_h(x) d \mu_h(y)
%		\rightarrow_{r \rightarrow 0^+} 0.
%	\end{equation*}
%	Then it holds that
%	\begin{equation*}
%		\lim_{h \rightarrow \infty} M_s(\mu _h) = M_s(\mu_\infty).
%	\end{equation*}
%\end{lemma}
%\begin{proof}
%	We observe that the function $\R^d \times \R^d \setminus D(r) \rightarrow \R, 
%	(x,y) \mapsto \frac{1}{|x-y|^s}$ is continuous and bounded. Therefore we have that
%	\begin{equation*}
%		\int_{D(r)^c} \frac{1}{|x-y|^s} d \mu_h(x) d \mu_h(y) \rightarrow 
%		\int_{D(r)^c} \frac{1}{|x-y|^s} d \mu_\infty(x) d \mu_\infty(y).
%	\end{equation*}
%	The assertion follows by the above formula together with the assumption.
%\end{proof}	
We recall the following classical lemma in the theory of the currents (see for instance  \cite[Lemma 7.3.3 page 186]{krantz2008geometric}).
\begin{lemma}\label{Lconvcorrenti}
	Let $ \gamma \in \mathcal{D}_1$ with compact support. For all $ \varepsilon>0$ there exists a unique function $f_\varepsilon \in C_c^{\infty}(\R^d,\R^d)$ such that 
	%$ \div f_\varepsilon =0$ and
	 $ \gamma * \rho_\varepsilon (\omega)= \int_{\R^d} \langle \omega(x), f_\varepsilon (x) \rangle d x$ for all $ \omega \in C_c^{\infty}(\R^d,\LamRd)$.
\end{lemma}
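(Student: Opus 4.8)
The plan is to reduce the claim to the classical fact that the convolution of a scalar distribution with a test function is a smooth function, applied componentwise. Writing a generic $\omega\in\mathcal{D}^1$ as $\omega=\sum_{k=1}^d\omega_k\lambda_k$ with $\omega_k\in C_c^\infty(\R^d,\R)$, I would first introduce the scalar distributions $T_k$ defined by $T_k(\phi):=\gamma(\phi\,\lambda_k)$; these are genuine distributions, since $\phi\mapsto\phi\,\lambda_k$ is continuous from $C_c^\infty(\R^d,\R)$ into $\mathcal{D}^1$ and $\gamma$ is continuous, and they satisfy $\gamma(\omega)=\sum_{k=1}^d T_k(\omega_k)$. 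Since the mollification acts componentwise, $\rho_\varepsilon*\omega=\sum_k(\rho_\varepsilon*\omega_k)\lambda_k$ still lies in $\mathcal{D}^1$, and the defining identity $\gamma*\rho_\varepsilon(\omega)=\gamma(\rho_\varepsilon*\omega)$ becomes
\[
\gamma*\rho_\varepsilon(\omega)=\sum_{k=1}^d T_k(\rho_\varepsilon*\omega_k).
\]

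Next I would define, for each $k$, the function
\[
f_{\varepsilon,k}(x):=T_k\big(\rho_\varepsilon(\,\cdot\,-x)\big),\qquad x\in\R^d,
\]
obtained by letting $T_k$ act on the test function $y\mapsto\rho_\varepsilon(y-x)$. This function is smooth: the map $x\mapsto\rho_\varepsilon(\,\cdot\,-x)$ is a smooth map into $C_c^\infty(\R^d,\R)$ (with supports contained in a fixed compact set as $x$ ranges over a bounded set), and $T_k$ is continuous, so one may pass $x$-derivatives onto the mollifier, giving $\partial_{x_j}f_{\varepsilon,k}(x)=-T_k(\partial_j\rho_\varepsilon(\,\cdot\,-x))$, and similarly for higher derivatives. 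The required duality then follows from a Fubini-type exchange legitimate for test functions:
\[
\int_{\R^d}f_{\varepsilon,k}(x)\,\phi(x)\,dx
= T_k\Big(\int_{\R^d}\rho_\varepsilon(\,\cdot\,-x)\,\phi(x)\,dx\Big)
= T_k(\rho_\varepsilon*\phi)
\]
for every $\phi\in C_c^\infty(\R^d,\R)$, where one reads $\int_{\R^d}\rho_\varepsilon(\,\cdot\,-x)\phi(x)\,dx=\rho_\varepsilon*\phi$ as a function of the slot variable. Setting $f_\varepsilon:=(f_{\varepsilon,1},\dots,f_{\varepsilon,d})$ and summing over $k$ then yields exactly $\gamma*\rho_\varepsilon(\omega)=\int_{\R^d}\langle\omega(x),f_\varepsilon(x)\rangle\,dx$.

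Finally, for the compact support I would use that $\supp\gamma\subset K$ for some compact $K$ and that $\supp\rho_\varepsilon\subset B_\varepsilon(0)$: since the test function $y\mapsto\rho_\varepsilon(y-x)$ is supported in $B_\varepsilon(x)$, the value $f_{\varepsilon,k}(x)=T_k(\rho_\varepsilon(\,\cdot\,-x))$ vanishes whenever $B_\varepsilon(x)\cap K=\emptyset$; hence $\supp f_\varepsilon$ is contained in the (compact) $\varepsilon$-neighbourhood of $K$. Uniqueness is immediate from the fundamental lemma of the calculus of variations: if two continuous fields represented the same functional, their difference would integrate to zero against every $\omega\in\mathcal{D}^1$ and so vanish identically. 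The only genuinely delicate point is the smoothness of $f_{\varepsilon,k}$ together with the interchange of the distributional action of $T_k$ with the convolution integral; this is precisely the content of the classical structure theorem for the convolution of a distribution with a smooth compactly supported function, and once it is invoked the remainder is bookkeeping over the $d$ components.
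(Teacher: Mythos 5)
Your proof is correct, and it fills in what the paper leaves implicit: the paper does not prove this lemma at all, but simply cites it as a classical fact (Krantz--Parks, Lemma 7.3.3). Your componentwise reduction to scalar distributions $T_k$, the definition $f_{\varepsilon,k}(x)=T_k\big(\rho_\varepsilon(\,\cdot\,-x)\big)$ (with the reflection chosen correctly so that the duality produces $T_k(\rho_\varepsilon*\phi)$, matching the paper's definition $\gamma*\rho_\varepsilon(\omega)=\gamma(\rho_\varepsilon*\omega)$ even for non-symmetric mollifiers), the Riemann-sum interchange, the support estimate, and the uniqueness via the fundamental lemma are exactly the standard proof of the cited classical result.
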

\begin{corollary}\label{corollaryApproxCurrents}
	Let $\gamma \in \mathcal{C}_s$ without boundary.  Then, there exists a sequence of families of closed piecewise linear curves $\Gamma_n:= \sum_{i=1}^{N(n)} \gamma_{n}^i$ such that, denoting by $\mu_n := \sum_{i=1}^{N(n)} \delta_n \frac{\dot{\gamma}^i_n}{ |\dot{\gamma}^i_n|} \,\mathcal{H}^1\lfloor_{\gamma^i_n}$ the corresponding $1$-forms, 
	we have
	\begin{align}
		& \mu_n 
		\xrightarrow[n 
		\to +\infty]{} \gamma \quad \text{weakly$^*$ in $\mathcal{D}_1$} \\
		& M_s(\mu_n) (\R^d)  \xrightarrow[n  \to +\infty]{}M_s(\gamma) \, .
	\end{align}
\end{corollary}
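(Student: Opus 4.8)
The plan is to reduce the general current $\gamma\in\mathcal{C}_s$ to the smooth divergence-free vector fields handled by Theorem~\ref{teoremaapprosi} through mollification, and then to merge the two approximations by diagonalizing in a metrizable topology. First I would mollify: for $\varepsilon\in(0,1)$ set $\gamma_\varepsilon:=\gamma*\rho_\varepsilon$. By Lemma~\ref{Lconvcorrenti} the current $\gamma_\varepsilon$ is represented by a function $f_\varepsilon\in C_c^\infty(\R^d,\R^d)$, and since $\gamma$ has no boundary we have $\partial\gamma_\varepsilon=(\partial\gamma)*\rho_\varepsilon=0$, which for the smooth representative means $\dv f_\varepsilon=0$. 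Hence each $f_\varepsilon$ meets the hypotheses of Theorem~\ref{teoremaapprosi}. By Lemma~\ref{Lemmaconvolution} we moreover have $\gamma_\varepsilon\to\gamma$ weak$^*$ in $\mathcal{D}_1$ and $M_s(\gamma_\varepsilon)=(2\pi)^{-d}\|\gamma_\varepsilon\|_{\Cs}^2\to M_s(\gamma)$ as $\varepsilon\to0$; in particular $\sup_{\varepsilon}\|\gamma_\varepsilon\|_{\Cs}<\infty$, and all the supports sit in one fixed bounded open set $U$.

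The device that makes the diagonalization clean is the following upgrade of weak$^*$ convergence, a direct consequence of the compactness Theorem~\ref{compthm}: if $\{\sigma_n\}\subset\Cs(U)$ satisfies $\sup_n\|\sigma_n\|_{\Cs}<\infty$ and $\sigma_n\to\sigma$ weak$^*$ in $\mathcal{D}_1$ with $\sigma\in\Cs(U)$, then $\overline{\sigma_n}\to\overline{\sigma}$ in the norm of $C_0^{k,\alpha}(U,\LamCd)^*$, for any fixed $k>d-\tfrac{s}{2}$ and $\alpha\in(0,1]$. Indeed, Theorem~\ref{compthm} gives precompactness of $\{\overline{\sigma_n}\}$ in $C_0^{k,\alpha}(U,\LamCd)^*$; a subsequential limit $\tau$ restricts on $\mathcal{D}^1$ to an element of $\Cs(U)$ which must equal $\sigma$ by uniqueness of the weak$^*$ limit, whence $\tau=\overline{\sigma}$ by the uniqueness of the continuous extension in Proposition~\ref{propositionEtensionCkalpha}; as every subsequential limit is $\overline{\sigma}$, the whole sequence converges in norm.

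Applying this upgrade twice transfers the whole problem into the genuine Banach space $C_0^{k,\alpha}(U,\LamCd)^*$, where diagonalization is immediate. On one side it gives $\overline{\gamma_\varepsilon}\to\overline{\gamma}$ in norm as $\varepsilon\to0$. On the other side, for each fixed $\varepsilon$ Theorem~\ref{teoremaapprosi} applied to $f_\varepsilon$ produces closed piecewise linear curves with associated $1$-forms $\mu_n^\varepsilon$ such that $\mu_n^\varepsilon\to\gamma_\varepsilon$ weak$^*$ in $\mathcal{D}_1$ and $M_s(\mu_n^\varepsilon)\to M_s(\gamma_\varepsilon)$ as $n\to\infty$; since $\{M_s(\mu_n^\varepsilon)\}_n$ is eventually bounded, the upgrade yields $\mu_n^\varepsilon\to\gamma_\varepsilon$ in $C_0^{k,\alpha}(U,\LamCd)^*$ as well. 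For a sequence $\varepsilon_j\downarrow0$ I would then pick $n_j$ so large that simultaneously $\|\mu_{n_j}^{\varepsilon_j}-\overline{\gamma_{\varepsilon_j}}\|_{C_0^{k,\alpha}(U,\LamCd)^*}<\varepsilon_j$ and $|M_s(\mu_{n_j}^{\varepsilon_j})-M_s(\gamma_{\varepsilon_j})|<\varepsilon_j$. Combining with $\overline{\gamma_{\varepsilon_j}}\to\overline{\gamma}$ and $M_s(\gamma_{\varepsilon_j})\to M_s(\gamma)$, the triangle inequality gives $\mu_{n_j}^{\varepsilon_j}\to\gamma$ in $C_0^{k,\alpha}(U,\LamCd)^*$ (hence weak$^*$ in $\mathcal{D}_1$) and $M_s(\mu_{n_j}^{\varepsilon_j})\to M_s(\gamma)$. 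Relabeling this diagonal sequence yields the asserted families of closed piecewise linear curves.

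The main obstacle is not a single hard estimate but the choice of topology in which to diagonalize. The $s$-fractional mass controls only a weighted $L^2$ norm of $\widehat\gamma$ and gives no bound on the classical mass $|\mu_n^\varepsilon|(\R^d)$, which may well blow up as $\varepsilon\to0$; consequently one cannot diagonalize directly in the weak$^*$ topology of $\mathcal{M}_b$, where tightness would be needed. Routing everything through the normed dual $C_0^{k,\alpha}(U,\LamCd)^*$ by means of Theorem~\ref{compthm} is exactly what removes this difficulty, and the point demanding the most care is to verify that all the mollifications and all the approximating curves remain supported in one fixed bounded $U$, so that a single functional-analytic framework applies throughout the argument.
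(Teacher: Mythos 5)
Your proposal follows essentially the same route as the paper's proof: mollify $\gamma$, invoke Lemma \ref{Lconvcorrenti} to obtain smooth compactly supported representatives $f_\varepsilon$, deduce $\dv f_\varepsilon = 0$ from $\partial\gamma = 0$, and then diagonalize the approximations given by Theorem \ref{teoremaapprosi} against the convergences $\gamma*\rho_\varepsilon \to \gamma$ and $M_s(\gamma*\rho_\varepsilon)\to M_s(\gamma)$ from Lemma \ref{Lemmaconvolution}. The only difference is that the paper compresses the last step into the phrase ``standard diagonal argument,'' whereas you implement it explicitly by upgrading, via Theorem \ref{compthm} and the uniform $\Cs$-bounds, weak$^*$ convergence to norm convergence in $C_0^{k,\alpha}(U,\LamCd)^*$ before diagonalizing --- a careful and correct way to carry out precisely the step the paper leaves implicit, and one that rightly accounts for the absence of uniform mass bounds on the approximating curves.
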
	
	\begin{proof}
		By the above lemma we have that for all $\varepsilon>0$ there exists a unique function $f_\varepsilon:= (f_\varepsilon^1, \dots, f_\varepsilon^d) \in C_c^{\infty}(\R^d,\R^d)$  such that $ \gamma * \rho_\varepsilon (\omega)= \int_{\R^d} \langle \omega(x), f_\varepsilon (x) \rangle d x$ for all $ \omega \in C_c^{\infty}(\R^d,\LamRd)$. Moreover we have that  $ \div f_\varepsilon =0$. Indeed, from the closedness of $\gamma$ and the divergence theorem, we have that 
		\begin{equation*}
			0= \partial \gamma(\rho_{\varepsilon} * g) =  \gamma(d(\rho_{\varepsilon}*g))=\gamma(\rho_{\varepsilon}*d g)=\int_{\R^d} \langle d g(x), f_\varepsilon(x)\rangle dx= \int_{\R^d} g(x) \div f_\varepsilon(x) dx
		\end{equation*}
 		for any $g \in \mathcal{D}_0(\R^d)$. The thesis follows using a standard diagonal argument combining the Lemma \ref{Lemmaconvolution} applied to $\gamma$ and Theorem \ref{teoremaapprosi} applied to $ (f_\varepsilon^1,\dots,f_\varepsilon^d)$. 
		\end{proof}

\section*{Conclusions}
In this paper we have proposed a notion of $s$-fractional mass for $1$-currents in $\R^d$, where $s$ belongs to $(0,1)$. It is natural to ask what happens for the limit cases corresponding to $s=0$ and $s=1$. We have done a partial progress in this program, showing that, for regular enough curves, the limit as $s\to 1$ gives back the classical notion of length. This pointwise result could be investigated in terms of $\Gamma$-convergence, as done for $s$-fractional perimeters in \cite{Da2002,ADPM11}.   

Moreover, in the Introduction we have suggested the following notion of $\e$-regularized mass for $s$ equal to $1$: 	
\begin{equation}\label{deforga2conc}
	\mathrm{M}_1^\varepsilon (\gamma) = \int_{\gamma}\int_{\gamma} \frac{ \tau(x)\cdot \tau(y)}{\max\{|x-y|, \varepsilon\}}  \, d \H^1\, d\H^1\ \,.
\end{equation}
We expect that such an energy functional (in its material dependent declinations) could represent a purely geometric counterpart to Ginzburg Landau energy functionals for vortex filaments, as well as to the self energy of dislocation lines in crystals. In this respect, it seems interesting to study the $\Gamma$-convergence of these functionals, after scaling them by $\frac{1}{|\log \e|}$,  to line tension models, as $\e \to 0$ \cite{GM06, DKP22}. We also remark that, for technical reasons, it could be convenient to replace the denominator ${\max\{|x-y|, \varepsilon\}}$ with a function which is strictly  increasing also around zero. 
The study of the limit as $s\to 0$ of the $s$-fractional mass could also deserve some attention.  

As explained in the Introduction, the proposed notion of $s$-fractional mass generalizes that of $s$-fractional perimeters of sets in $\R^2$ to higher codimention and not necessarily integer rectifiable currents. 
In fact, as for sets with finite fractional perimeter, there is no reason to expects that the $1$-currents with bounded fractional mass are rectifiable; now, a natural question arises: what are integer (not necessarily rectifiable) currents in this context? One possibility is to exploit our Smirnov type approximation result, defining the integer currents with finite mass as the closure of polyhedral $1$-currents with respect to the notion of convergence in Theorem  \ref{teoremaapprosi}. Now, take an integer (i.e., limit of smooth curves) $1$-current $\gamma$ without boundary and with finite $s$-fractional mass; is there a $2$-current, with bounded area (i.e., bounded local mass) whose boundary is $\gamma$? Furthermore, is the mass controlled by the $s$-fractional mass of its boundary? In the terminology of geometric measure theory, we ask if the fractional mass controls the flat norm. In the flat case of 1-currents in $\R^2$ this is true, being  nothing but the fractional isoperimetric inequality.  
A positive answer to the questions posed above would suggest that integer $1$-currents with bounded fractional mass are a good class of boundary data for setting up the Plateau problems for minimal surfaces. 

Let us also mention that our definition of higher codimensional fractional mass could be generalized to the case of  $k$-forms in $\R^d$ for all $1\le k\le d$. Finally, also the case $k=0$ corresponding to charged masses could be investigated.

\subsection*{Acknowledgments}  
	A. Kubin is supported by the DFG Collaborative Research Center TRR 109 “Discretization in Geometry and Dynamics”. 
	%%%%%%%%%%%%%%%%%%%%%%%%%%%%%%%%%%%%%%%%
	%%%%%%%%%%%%%%%%%%%%%%%%%%%%%%%%%%%%%%%%
	%%%%%%%%%%%%%%%%%%%%%%%%%%%%%%%%%%%%%%%% 
	%%%%%%%%%%%%%%%%%%%%%%%%%%%%%%%%%%%%%%%%
	%%%%%%%%%%%%%%%%%%%%%%%%%%%%%%%%%%%%%%%%
	%%%%%%%%%%%%%%%%%%%%%%%%%%%%%%%%%%%%%%%%
\bibliographystyle{plain}
\bibliography{biblio_fractional_mass22}
	%%%%Fumihiko has set up the Bibtex for convenience%%%%
	
%	\begin{thebibliography}{99}
%		
%		%%%%%%%%%%%%%%%%%%%%%%%%%%%%%%%%%%%%%%%%
%		\bibitem{ADPM}
%		{L. Ambrosio, G. De Philippis, L. Martinazzi}: Gamma-convergence of nonlocal perimeter functionals. {\it Manuscripta Math. } {\bf 134} (2011), 377--403. 
%		%%%%%%%%%%%%%%%%%%%%%%%%%%%%%%%%%%%%%%%
%%		\bibitem{Kelley1975}{Kelley, John L. General topology.springer-verlag. (1975).}
%		%%%%%%%%%%%%%%%%%%%%%%%%%%%%%%%%%%%%%%%%
%		\bibitem{Mattila1999}{Mattila, P.  Geometry of sets and measures in Euclidean spaces: fractals and rectifiability (No. 44). Cambridge university press. (1999).}
%	\end{thebibliography}
\end{document}